\newtheorem{theorem}{Theorem}
\newtheorem{lemma}[theorem]{Lemma}
\newtheorem{question}[theorem]{Question}
\newtheorem{corollary}[theorem]{Corollary}
\newtheorem{proposition}[theorem]{Proposition}
\newtheorem*{example}{Example}
\newtheorem*{remark}{Remark}
\newtheorem*{notation}{Notation}
\title{Group rings and hyperbolic geometry}
\author{Grigori Avramidi and Thomas Delzant}
\begin{document}
\maketitle

\begin{abstract}
For a group acting on a hyperbolic space, we set up an algorithm in the group algebra showing that ideals generated by few elements are free, where few is a function of the minimal displacement of the action, and derive algebraic, geometric, and topological consequences.  In particular, we obtain lower bounds on Morse complexity of closed hyperbolic manifolds in terms of injectivity radius. 
\end{abstract}
\section{Introduction}

Let $G$ be a group and $\mathbb{K}$ a field. 
A natural problem is to study relations between the group $G$ and its group algebra $\mathbb K[G]$. For instance, in 1953 Fox suggested that
\begin{quote}
``{\it It seems reasonable to conjecture that a group ring $\mathbb Z[G]$ can not have divisors of zero unless $G$ has elements of finite order; this seems to be not an easy question.}''(\cite{fox}, p.557).
\end{quote} 
This was also conjectured by Kaplansky in \cite{kaplansky} and by Higman in his (unpublished) thesis (see \cite{sandling}). It is equivalent to the statement that ideals generated by one element are free modules. In the early 60's, Cohn went further (in \cite{cohnfir,cohnicm,cohnbook}) and investigated rings in which ideals generated by any number of elements are free as $\mathbb K[G]$-modules (calling them {\it free ideal rings}, or {\it firs} for short), 
and showed that group algebras of free groups have this property. Soon after, Stallings proved his celebrated result on ends of groups \cite{stallings} which implies no other group algebras do.
\begin{theorem}[Cohn \cite{cohnfir}+(a consequence of) Stallings \cite{stallings}]
\label{ffir}
The group $G$ is a free group if and only if all ideals in the group algebra $\mathbb K[G]$ are free as submodules. 
\end{theorem}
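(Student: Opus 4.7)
The plan is to prove the two implications separately, since the theorem packages together Cohn's ``free $\Rightarrow$ fir'' result and a consequence of Stallings' theorem on ends.

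For the forward direction ($G$ free $\Rightarrow$ all ideals of $\mathbb{K}[G]$ free), I would follow Cohn's strategy via his \emph{weak algorithm}. Fix a free basis $\{x_i\}$ of $F = G$ and define a degree function on $\mathbb{K}[F]$ by the maximum word length of group elements appearing with nonzero coefficient. The crucial combinatorial input is that reduced words in a free group do not overlap ambiguously, so whenever the top-degree parts of finitely many elements $f_1,\dots,f_n$ are $\mathbb{K}$-linearly dependent, one can form a left combination $\sum c_i g_i f_i$ (with $g_i \in F$) whose degree strictly drops. This is the weak algorithm. A Schreier-style induction on degree then produces, for any one-sided ideal $J$, a free basis consisting of elements with pairwise distinct top monomials, so $J$ is free.

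For the converse (all ideals free $\Rightarrow$ $G$ free), I would first extract two consequences of the hypothesis: every principal ideal being free forces $\mathbb{K}[G]$ to be a domain (hence $G$ is torsion-free), and the augmentation ideal $I \subset \mathbb{K}[G]$ being free makes
\[
0 \to I \to \mathbb{K}[G] \to \mathbb{K} \to 0
\]
a length-one free resolution of the trivial module, so $\mathrm{cd}_{\mathbb{K}} G \leq 1$. At this point I would invoke the Stallings--Swan theorem: any group of cohomological dimension at most one over a field is free. For finitely generated $G$ this reduces, via $H^1(G;\mathbb{K}[G]) \neq 0$, to Stallings' structure theorem (so $G$ splits as a free product of proper factors of smaller rank) and then an induction on rank using Grushko; Swan's extension covers the infinitely generated case.

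The main obstacle is in the forward direction: a clean combinatorial verification of the weak algorithm, which forces one to analyze how reduced words in $F$ can cancel when multiplied on the left by arbitrary group elements. The converse direction is essentially a pipeline of standard facts once cohomological dimension one is extracted, with the nontrivial input being Stallings--Swan itself, which one cites rather than reproves. A small technical point worth confirming is that ``all ideals are free'' is interpreted as one-sided ideals of arbitrary rank, which is the natural fir condition and is what both halves of the argument use.
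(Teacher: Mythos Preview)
Your converse direction matches the paper's sketch (given in the remark after Corollary~\ref{stallings}) essentially verbatim: freeness of the augmentation ideal gives $\mathrm{cd}\leq 1$, torsion-freeness comes from the domain property, and then Stallings--Swan finishes. Nothing to add there.

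For the forward direction you take a genuinely different route from the one the paper attributes to Cohn. The paper's outline is two-step: first prove the weak algorithm in the \emph{monoid} ring $\mathbb{K}[F_+]$ of non-negative words, where word length is honestly multiplicative ($|xy|=|x|+|y|$), and then pass to $\mathbb{K}[F]$ by a localization argument. You instead propose to run the weak algorithm directly in $\mathbb{K}[F]$ using word length. That is closer in spirit to Hog-Angeloni's geometric argument, which the paper also discusses; but note the subtlety that on $\mathbb{K}[F]$ the word-length filtration only satisfies $|xy|\leq |x|+|y|$, so ``top-degree part'' is not as clean a notion as your sketch suggests, and the reduction ``top monomials $\mathbb{K}$-dependent $\Rightarrow$ degree drops'' is not literally the weak-algorithm statement. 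Hog-Angeloni's fix is to measure size by \emph{diameter} of the support (which is left-translation invariant) rather than by distance to the origin; with that adjustment the Schreier-style basis construction goes through for finitely generated ideals, though one still needs Cohn's localization (or an additional argument) to handle the infinitely generated case. So your outline is sound in shape but would need either the $\mathbb{K}[F_+]$-plus-localization detour or the diameter reformulation to become a proof.
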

A related question is to describe, for a ring $R$, the automorphism group $\mathrm{GL}_n(R)$ of a free $R$-module. 
Denote by $\text{GE}_n(R)$ the subgroup generated by elementary and diagonal matrices. Cohn also showed that for group algebras of free groups this is the entire automorphism group.  
\begin{theorem}[Cohn \cite{cohngl}]
For every $n$, $\mathrm{GE}_n(\mathbb K[F])=\mathrm{GL}_n(\mathbb K[F])$. 
\end{theorem}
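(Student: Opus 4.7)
My plan is to mimic the classical proof that $\mathrm{GL}_n(R)=\mathrm{GE}_n(R)$ for Euclidean domains $R$, replacing Euclidean division with a non-commutative analogue on $R:=\mathbb K[F]$ (Cohn's \emph{weak algorithm}). Fix a free basis of $F$ and the induced word length $|\cdot|$, and equip $R$ with the increasing filtration $R_n = \mathbb K\text{-span}\{g\in F : |g|\le n\}$. Write $\deg(f):=\min\{n:f\in R_n\}$ and $\deg(0):=-\infty$. The key technical input is the following weak algorithm: if $a_1,\dots,a_k\in R\setminus\{0\}$ satisfy a nontrivial right-linear relation $\sum a_ib_i=0$, then after reordering so $\deg(a_1)$ is maximal one can write $a_1=\sum_{i\ge 2}a_ic_i$ with each $\deg(a_ic_i)\le \deg(a_1)$. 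This is where the free-group structure enters: products in $R$ can be controlled via cancellation in the free group using the tree on which $F$ acts, and nontrivial right dependencies among elements of prescribed degree must already come from a shorter relation.

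Given $A\in\mathrm{GL}_n(R)$, define its complexity $c(A):=\sum_{i,j}\max(\deg A_{ij},0)$ and induct downward on $c(A)$. Since $A$ is invertible, the first row $(a_1,\dots,a_n)$ is right-unimodular: there exist $b_i\in R$ (a column of $A^{-1}$) with $\sum a_ib_i=1$. Reorder columns so $\deg(a_1)$ is maximal; the relation $\sum a_ib_i-1\cdot 1=0$ combined with the weak algorithm applied to $\{a_1,\dots,a_n,1\}$ yields $a_1=\sum_{i\ge 2}a_ic_i+c\cdot 1$ with every summand of degree $\le\deg(a_1)$. Subtracting $c_i$ times column $i$ from column $1$ (elementary operations) replaces $a_1$ by the scalar $c$, and either strictly decreases $c(A)$ or reduces the first row to $(u,0,\dots,0)$ with $u$ a unit. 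The units of $\mathbb K[F]$ are exactly $\mathbb K^\times\cdot F$ (another consequence of the length structure), so a diagonal scaling makes the row $(1,0,\dots,0)$. Standard column operations then clear the first column below, reducing the problem to the bottom-right $(n-1)\times(n-1)$ block, and $n$-induction finishes.

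The main obstacle is establishing the weak algorithm rigorously on $\mathbb K[F]$. A naive version fails: the associated graded of $R$ for the above filtration is not a domain, since for a unit $cg$ with $g\in F$ the leading term multiplies its inverse's leading term to $e\ne 0$ in top degree, so one cannot simply import polynomial-ring arguments. One must instead phrase the weak algorithm combinatorially, using that cancellations in products of reduced words in $F$ are localized and controlled by the Bass--Serre tree of $F$, and arguing that any right linear dependence among elements of bounded length descends to a dependence of smaller total length. This combinatorial content is precisely what distinguishes free groups here; once it is in place, the row-reduction above is essentially mechanical, and its failure for non-free $G$ explains why Theorem~\ref{ffir} is sharp in the analogous direction.
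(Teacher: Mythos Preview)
Your overall strategy---use a unimodular column coming from the inverse matrix, reduce it to a single unit by elementary operations via a weak-algorithm-type result, then block-triangularize and induct on $n$---is exactly what the paper does in Theorem~\ref{gltheorem} via Lemma~\ref{elementary} (which specializes to Cohn's theorem when $G=F$ acts on its Cayley tree). But two of your steps, as written, do not work.

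First, your weak algorithm is misstated. From a nontrivial relation $\sum a_ib_i=0$ one cannot in general express the element of largest degree \emph{exactly} as a right combination of the others: in $\mathbb K[x,x^{-1}]$ with $\mathrm{char}\,\mathbb K\neq 2$, the elements $a_1=x+1$ and $a_2=x-1$ satisfy $a_1(x-1)-a_2(x+1)=0$ yet neither is a multiple of the other. The correct statement (Hog-Angeloni's theorem quoted in the introduction, i.e.\ Theorem~\ref{intromaintheorem} at $\delta=0$) is that some $a_j$ can have its \emph{diameter strictly decreased} by subtracting multiples of the others. In your application to $\{a_1,\dots,a_n,1\}$ your version is in any case vacuous (one always has $a_1=1\cdot a_1$), and there is no reason the resulting ``$c$'' is a scalar.

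Second, the complexity $c(A)=\sum_{i,j}\max(\deg A_{ij},0)$ is not monotone under your moves: a column operation on column~$1$ changes every entry of that column, and the degrees in rows $2,\dots,n$ may well grow. The paper's fix (Lemma~\ref{elementary}) is to track only the single column $(\xi_{11},\dots,\xi_{n1})$ and minimize $\sum_i\mathrm{diam}(\xi_{i1})$; the correct weak algorithm then guarantees a strict decrease in \emph{that} quantity until some coordinate becomes a trivial unit. You also correctly flag that the word-length degree on $\mathbb K[F]$ is awkward (the associated graded is not a domain); the paper's resolution is precisely to replace $|\cdot|$ by the translation-invariant $\mathrm{diam}$, so that Hog-Angeloni's argument runs directly on $\mathbb K[F]$ without the detour through $\mathbb K[F_+]$ and localization.
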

Bass \cite{bass} used these two theorems to show projective modules over the integral group ring of a free group, $\mathbb Z[F]$, are free. This algebraic result has a striking topological consequence: \begin{corollary}[3.3. in \cite{wall}]
\label{standard}
Any two-dimensional complex with free fundamental group is homotopy equivalent to a wedge of circles and $2$-spheres.
\end{corollary}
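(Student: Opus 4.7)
The plan is to derive the topological splitting of $X$ from the algebraic freeness results already on the table. The central task is to show that $\pi_2(X)$ is free as a $\mathbb Z[F]$-module; once this is in hand, realizing a basis by explicit $2$-spheres and assembling the wedge is straightforward.

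For the algebra, let $\tilde X$ be the universal cover of $X$, with cellular chain complex $0 \to C_2 \xrightarrow{\partial_2} C_1 \xrightarrow{\partial_1} C_0$ of free $\mathbb Z[F]$-modules, augmented to $\mathbb Z$. Because $F$ is free, a wedge of circles is a $K(F,1)$ whose universal cover is a tree; its augmented cellular complex yields a free resolution $0 \to \mathbb Z[F]^{\oplus r} \to \mathbb Z[F] \to \mathbb Z \to 0$ of length one, so $F$ has cohomological dimension $1$. Comparing this with the sequence $0 \to Z_1 \to C_1 \to C_0 \to \mathbb Z \to 0$ (where $Z_1 := \ker\partial_1$) via Schanuel's lemma, I conclude that $Z_1$ is projective, hence free by Bass's theorem. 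The short exact sequence $0 \to H_2(\tilde X) \to C_2 \to Z_1 \to 0$ then splits, exhibiting $H_2(\tilde X)$ as a direct summand of the free module $C_2$, so projective and in turn free by Bass again. By Hurewicz, $\pi_2(X) = \pi_2(\tilde X) = H_2(\tilde X)$ is free over $\mathbb Z[F]$.

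For the geometry, I pick a map $\bigvee S^1 \to X$ inducing an isomorphism on $\pi_1$, and lift a free basis of $\pi_2(X)$ to explicit maps $S^2 \to X$. Wedging these together produces a map
\[
f : Y := \Bigl(\bigvee S^1\Bigr) \vee \Bigl(\bigvee S^2\Bigr) \longrightarrow X
\]
which is an isomorphism on $\pi_1$ and, by construction, on $\pi_2$ as a $\mathbb Z[F]$-module. Its lift $\tilde f : \tilde Y \to \tilde X$ is a map between simply connected two-dimensional CW complexes that induces isomorphisms on $H_1 = 0$ and on $H_2 = \pi_2$, with higher homology vanishing for dimension reasons. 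Whitehead's theorem then makes $\tilde f$, and hence $f$, a homotopy equivalence.

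The decisive step is the first one: freeness of $\pi_2(X)$ relies on Bass's theorem that projectives over $\mathbb Z[F]$ are free, which in turn combines the Cohn/Stallings characterization of $\mathbb Z[F]$ as a fir with Cohn's computation of $\mathrm{GL}_n(\mathbb Z[F])$. Once the $\mathbb Z[F]$-module structure on $\pi_2(X)$ is pinned down, the passage from algebra to topology is essentially formal.
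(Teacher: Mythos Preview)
Your argument is correct. Note, however, that the paper does not actually prove Corollary~\ref{standard}: it is quoted as a known result from \cite{wall}, presented as a consequence of Bass's theorem on projective $\mathbb Z[F]$-modules. The paper does prove the hyperbolic analogue (Theorem~\ref{hypstandard}), and your approach mirrors that proof almost exactly: show $\pi_2(X)$ is a free $\mathbb Z[G]$-module, then build a map from $Y\vee S^2\vee\cdots\vee S^2$ realizing the $\pi_1$-isomorphism and a basis of $\pi_2$, and conclude by Whitehead. The only methodological difference is in how freeness of the cycle module $Z_1=\ker\partial_1=\partial C_2$ is obtained: you deduce it from $\mathrm{cd}(F)=1$ via Schanuel's lemma (getting $Z_1$ stably free, hence projective, hence free by Bass), whereas the paper's argument in the hyperbolic setting uses the torsion-free condition $(\star)$ together with its Theorem~\ref{freezmodules}. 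For the free-group case both routes collapse to the same input, namely that projectives over $\mathbb Z[F]$ are free.
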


Our goal is to establish similar results for groups $G$ acting on hyperbolic spaces. Earlier steps in this direction were taken in \cite{delzant} and \cite{avramidi}, where it was proved that ideals in $\mathbb K[G]$ generated by one, respectively two elements are free if the minimum displacement of the action is large enough. Going further, we show in this paper that ideals generated by $n$ elements are free if 
\vspace{0.3cm}

\noindent ($\mathcal{H}_{n}$):\hspace{0.1cm} {\it $G$ acts on a $1$-hyperbolic space with displacement more than 
$100\log_2((n+1)!)$.}

\vspace{0.3cm}
\begin{remark}
By homogeneity, this is equivalent to the hypothesis that $G$ acts on a $\delta$-hyperbolic space with minimal displacement greater than $\delta\cdot 100\log_2((n+1)!)$.
\end{remark}
\begin{remark}
Note that $\log_2((n+1)!)=\log_2 2+\log_2 3+\dots+\log_2(n+1)$. 
Our main result is proved via an induction on $n$, adding a term of order $\log_2(k+1)$ at the $k$-th step and---in total---a correction on the order of $\log_2((n+1)!)$.
\end{remark}
Under the same hypothesis, we also describe the automorphism groups of free, rank $n$ modules.
\begin{theorem}
\label{mainfirtheorem}
Assume the group $G$ satisfies $\mathcal H_{n}$. Then 
\begin{enumerate}
\item[1.]
Every $n$-generated ideal in $\mathbb K[G]$ is a free $\mathbb K[G]$-module, and 
\item[2.]
$
\mathrm{GE}_n(\mathbb K[G])=\mathrm{GL}_n(\mathbb K[G]).
$
\end{enumerate}
\end{theorem}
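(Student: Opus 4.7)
My plan is to follow Cohn's framework by establishing an $n$-term weak algorithm in $\mathbb{K}[G]$ with respect to a length function coming from the hyperbolic action. A filtered ring satisfying such a weak algorithm is automatically an $n$-fir---so that $n$-generated ideals are free modules---and also satisfies $\mathrm{GE}_n = \mathrm{GL}_n$; hence both parts of Theorem \ref{mainfirtheorem} will follow simultaneously from a single algorithmic statement.

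Fix a basepoint $o \in \mathcal{H}$ and, for $a = \sum a_g g \in \mathbb{K}[G]$, set $\ell(a) = \max_{g \in \mathrm{supp}(a)} d(o, go)$ with leading set $L(a) = \{g \in \mathrm{supp}(a) : d(o,go) = \ell(a)\}$. Each $g \in L(a)$ picks out a direction from $o$, namely the shadow of the segment $[o,go]$ far out in $\mathcal{H}$. The statement to prove is: whenever $a_1,\ldots,a_n$ and $b_1,\ldots,b_n$ in $\mathbb{K}[G]$ satisfy $\ell(\sum a_i b_i) < \max_i \ell(a_i b_i)$, some $a_i$ can be rewritten as $\sum_{j \neq i} a_j c_j$ plus an element of length $< \ell(a_i)$, with $\ell(a_j c_j) \leq \ell(a_i)$ for each $j$. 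Iterating this reduction on a generating set of an $n$-generated ideal produces a right-independent family, that is, a free basis; parallel elementary reductions clear any matrix in $\mathrm{GL}_n(\mathbb{K}[G])$ to diagonal form.

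The geometric engine is an analysis of products $a_i b_i$ via $\delta$-thin triangles. When $gho$ attains length near $\ell(a_i) + \ell(b_i)$, the geodesic from $o$ to $gho$ must fellow-travel $[o,go]$ for almost its full length, so the direction of $gh$ from $o$ is determined by the direction of $g$ up to $O(\delta)$-error. The hypothesis $\mathcal{H}_{n,\delta}$ of displacement $> (2n+11)^2\delta$ forces exactly this near-additivity, extending the $n=1,2$ analyses of \cite{delzant} and \cite{avramidi}: each $h$ in the leading set of $b_i$ produces a genuinely aligned product $gh$. Consequently the leading part of $a_i b_i$ concentrates in a shadow determined by $L(a_i)$ alone, and cancellation in $\sum a_i b_i$ constrains these shadows in a way that forces an algebraic dependence among the leading parts of the $a_i$---this is the content of the weak algorithm.

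I expect the main obstacle---and the source of the quadratic bound $(2n+11)^2\delta$---to be the inductive bookkeeping across up to $n$ potentially overlapping shadows. Each reduction step must exhibit an explicit elementary operation that decreases total length, preserve the hypotheses of the algorithm for the remaining generators, and consume only a bounded-in-$\delta$ portion of the displacement budget; iterating through $n$ leading elements then eats roughly $n^2 \delta$, with extra additive constants coming from basepoint shifts and from the reorganization of the leading set after each step. Once this geometric weak algorithm is in hand, both conclusions of Theorem \ref{mainfirtheorem} follow from Cohn's general machinery on $n$-firs and on $\mathrm{GE}_n$ in filtered rings with a weak algorithm \cite{cohnfir,cohngl}.
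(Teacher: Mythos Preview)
Your plan has a genuine structural gap: the ``weak algorithm'' you formulate is stated for the length filtration $\ell$, but $\ell$ is badly non-multiplicative on $\mathbb{K}[G]$ (already $\ell(g\cdot g^{-1})=0$), so Cohn's machinery \cite{cohnfir,cohngl} does not apply off the shelf---his $n$-term weak algorithm is built for degree-like filtrations, and the implications ``weak algorithm $\Rightarrow$ $n$-fir, $\mathrm{GE}_n=\mathrm{GL}_n$'' are proved in that setting. More concretely, your claimed reduction ``$\ell(\sum a_ib_i)<\max_i\ell(a_ib_i)\Rightarrow$ some $\ell(a_i)$ can be lowered'' cannot hold as written in a genuinely $\delta$-hyperbolic (non-tree) space: with only a strict inequality on the left you have no $\delta$-room to absorb thin-triangle errors, and with only ``$<\ell(a_i)$'' on the right you have no definite decrement, so the iteration need not terminate (the values $\ell(a_i)$ are real numbers, not integers). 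Your shadow heuristic (``the leading part of $a_ib_i$ is determined by $L(a_i)$ alone'') also fails in general, since which products $gh$ are extremal depends on both factors and on the Gromov products $\langle g^{-1}o,ho\rangle$, not on $g$ alone.

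The paper's proof repairs exactly these three points, and the repair is not cosmetic. First, the hypothesis is strengthened to a $\delta_n$-gap, $|\sum\alpha_i\xi_i|<\max_i|\alpha_i\xi_i|-\delta_n$ with $\delta_n=(n^2+10n)\delta$; this is harmless for the applications (where the sum is $0$ or $1$) but indispensable for the geometry. Second---and this is the key idea, going back to Hog-Angeloni---the quantity that decreases is the \emph{diameter} of a generator, not its length: $\mathrm{diam}$ is left-translation invariant, so one can analyse the family $(\alpha_i^g g\xi_i)$ of translates simultaneously via their common ``colors'', and the conclusion is a definite drop $\mathrm{diam}(\xi_1+\sum\beta_i\xi_i)<\mathrm{diam}(\xi_1)-\delta$, which guarantees termination. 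Third, rather than invoking Cohn's abstract fir theory, the paper deduces parts 1 and 2 directly from this diameter-reduction step via the short Lemma~\ref{elementary}: minimise the sum of diameters over the $\mathrm{E}_n$-orbit and observe that any residual dependence (resp.\ unimodularity) would allow a further reduction. If you want to salvage your outline, you should replace $\ell$ by $\mathrm{diam}$ in the conclusion, insert the $\delta_n$-slack in the hypothesis, and prove the reduction directly rather than routing through Cohn's filtered-ring theorems.
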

\begin{example} 
If $G$ is the fundamental group of a compact riemannian manifold of curvature $\leq -1$ acting on its universal cover, then the minimal displacement is twice the injectivity radius of the manifold and $\log 2$ can serve as a hyperbolicity constant. So, if the injectivity radius is greater than $\log 2\cdot 50\log_2((n+1)!)$, then $G$ satisfies $\mathcal H_{n}$. If $G$ is any residually finite hyperbolic group, then it has a finite index subgroup satisfying $\mathcal H_{n}$. In particular, every cocompact lattice in $SO(m,1),SU(m,1)$ and $Sp(m,1)$ has a finite index subgroup that satisfies $\mathcal H_n$. The free product of any two groups satisifying ${\mathcal{H}_{n}} $ also satisfies it. 
\end{example}

\subsection*{Applications}
To augment the description of $\mathrm{GL}_n$, one needs to understand diagonal matrices, which amounts to describing the units in $\mathbb K[G]$. This was already done in \cite{delzant}, where it was shown (assuming 
minimal displacement is greater than $4\delta$) that all units are trivial, i.e. have the form $\lambda g$ for $\lambda\in\mathbb K^*,g\in G$. In particular, for a finite field $\mathbb K$ and finitely generated group $G$ the unit group of $\mathbb K[G]$ is finitely generated. Combining this with our theorem, we obtain an analogous result for $\mathrm{GL}_n$.
\begin{theorem}
\label{fg}
Assume $G$ satisfies $\mathcal H_{n}$. If the field $\mathbb K$ is finite and the group $G$ is finitely generated, then the group $\mathrm{GL}_n(\mathbb K[G])$ is finitely generated. 
\end{theorem}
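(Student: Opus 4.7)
The plan is to reduce to showing that $\mathrm{GE}_n(\mathbb{K}[G])$ is finitely generated, using Theorem~\ref{mainfirtheorem}(2), and then to exhibit an explicit finite generating set built from (i) a finite generating set of the units and (ii) the elementary matrices $E_{ij}(1)$.

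First, by Theorem~\ref{mainfirtheorem}(2), $\mathrm{GL}_n(\mathbb{K}[G]) = \mathrm{GE}_n(\mathbb{K}[G])$, so it suffices to find a finite generating set of the latter. By definition, $\mathrm{GE}_n(\mathbb{K}[G])$ is generated by the diagonal matrices $D(u_1,\ldots,u_n)$ with $u_i \in \mathbb{K}[G]^\times$ and the elementary matrices $E_{ij}(r)$ with $r \in \mathbb{K}[G]$, $i \neq j$. I would handle each family in turn.

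Second, for the diagonal matrices: the hypothesis $\mathcal{H}_{n,\delta}$ forces displacement $>(2n+11)^2\delta > 4\delta$, so the trivial units theorem of \cite{delzant} applies and $\mathbb{K}[G]^\times = \mathbb{K}^\ast \cdot G$. Since $\mathbb{K}$ is finite and $G$ is finitely generated, the group $\mathbb{K}^\ast \cdot G \cong \mathbb{K}^\ast \times G$ is finitely generated, and hence so is the diagonal subgroup, which is its $n$-fold direct product.

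Third, for the elementary matrices, I would use the standard conjugation relation
\[
D(u_1,\ldots,u_n)\,E_{ij}(r)\,D(u_1,\ldots,u_n)^{-1} \;=\; E_{ij}(u_i r u_j^{-1})
\]
together with the additive identity $E_{ij}(r)E_{ij}(s)=E_{ij}(r+s)$. Conjugating $E_{ij}(1)$ by $D(1,\ldots,\lambda g,\ldots,1)$ (with the unit $\lambda g$ in the $i$-th slot) yields $E_{ij}(\lambda g)$ for every monomial $\lambda g \in \mathbb{K}^\ast \cdot G$, and taking products in a single slot produces $E_{ij}(r)$ for arbitrary $r\in\mathbb{K}[G]$. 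Hence every elementary matrix lies in the subgroup generated by $\{E_{ij}(1) : i \neq j\}$ together with the diagonal matrices.

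Combining these observations, $\mathrm{GL}_n(\mathbb{K}[G])$ is generated by the $n(n-1)$ matrices $E_{ij}(1)$ and by a finite generating set of the diagonal subgroup, hence is finitely generated. I expect no real obstacle here: the two nontrivial inputs (reduction to $\mathrm{GE}_n$ and triviality of units) are already in hand from Theorem~\ref{mainfirtheorem} and \cite{delzant}, and the remaining argument is a direct application of the Steinberg-type conjugation identity.
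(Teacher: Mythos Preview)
Your proposal is correct and follows essentially the same route as the paper: reduce to $\mathrm{GE}_n$ via Theorem~\ref{mainfirtheorem}(2), invoke \cite{delzant} to get that units are trivial so the diagonal subgroup is finitely generated, and then observe that the $E_{ij}(1)$ together with diagonals generate all elementary matrices. The paper states this last generation fact without justification, whereas you spell it out via the conjugation identity $D\,E_{ij}(r)\,D^{-1}=E_{ij}(u_i r u_j^{-1})$; otherwise the arguments coincide.
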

In a different direction, a geometric consequence of our theorem is a lower bound for critical points of Morse functions of a given index on essential manifolds.
\begin{theorem}
\label{manifold}
Let $X^d$ be a closed $d$-manifold, $G$ a group that satisfies $\mathcal H_{n}$ and $BG$ its classifying space. If there is a continuous map $f:X^d\rightarrow BG$ with $f_*[X]\not=0$ in $H_d(BG;\mathbb K)$ then for each $0<k<d$, a Morse function on $X^d$ has at least $n+1$ critical points of index $k$.  
\end{theorem}
The theorem applies---for instance---if $X=BG$ is a closed, riemannian manifold of curvature $\leq -1$ and injectivity radius greater than $50\log((n+1)!)$, or more generally if $X$ has a map of non-zero degree to such a $BG$.

Using Bass's local-to-global method, we also obtain the following version of Corollary \ref{standard} for some $2$-dimensional hyperbolic groups (e.g. very high genus surface groups).
\begin{theorem}
\label{hypstandard}
Assume that $G$ satisfies $\mathcal H_{n}$, and that there is an aspherical $2$-complex $Y$ with fundamental group $G$. Then every presentation $2$-complex for $G$ with less than $n+1$ relations is homotopy equivalent to 
$$
Y\vee S^2\vee\dots\vee S^2.
$$
\end{theorem}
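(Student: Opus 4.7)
The plan is to reduce Theorem \ref{hypstandard} to showing that $\pi_2(P)$ is a free $\mathbb{Z}[G]$-module, and then build an explicit homotopy equivalence by Whitehead's theorem. Let $P$ be a presentation 2-complex for $G$ with $g$ generators and $r \leq n$ relations. The cellular chain complex of its universal cover is
$$0 \to \mathbb{Z}[G]^r \xrightarrow{\partial_2} \mathbb{Z}[G]^g \xrightarrow{\partial_1} \mathbb{Z}[G] \to 0,$$
so $\pi_2(P) = \ker(\partial_2)$, and the image $J := \mathrm{im}(\partial_2)$ is an $r$-generated submodule of the free module $\mathbb{Z}[G]^g$.

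The algebraic core is to show $\pi_2(P)$ is free of some rank $s \leq n$ over $\mathbb{Z}[G]$. For any field $\mathbb{K}$, Theorem \ref{mainfirtheorem}(1), together with Cohn's equivalence between $n$-generated ideals being free and arbitrary $n$-generated submodules of free modules being free, gives that $J \otimes \mathbb{K}$ is free over $\mathbb{K}[G]$, so the short exact sequence $0 \to \pi_2(P) \to \mathbb{Z}[G]^r \to J \to 0$ splits after tensoring with every field. Bass's local-to-global method upgrades this to an integral splitting: $J$ is stably free, hence (being $\leq n$-generated over an $n$-fir) free, and the sequence splits over $\mathbb{Z}[G]$, exhibiting $\pi_2(P)$ as a direct summand of $\mathbb{Z}[G]^r$. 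Applying the same principle to $\pi_2(P)$, which is $\leq n$-generated and projective, we conclude $\pi_2(P) \cong \mathbb{Z}[G]^s$ for some $s \leq n$.

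To assemble the homotopy equivalence, choose a map $\psi : Y \to P$ inducing the identity on $\pi_1 = G$; such a map exists by cellular obstruction theory since $Y$ is aspherical. Choose maps $\alpha_1, \ldots, \alpha_s : S^2 \to P$ representing a $\mathbb{Z}[G]$-basis of $\pi_2(P)$, and form
$$f = \psi \vee \alpha_1 \vee \cdots \vee \alpha_s : Y \vee S^2 \vee \cdots \vee S^2 \longrightarrow P.$$
By construction $f$ is an isomorphism on $\pi_1$ (through $\psi$) and on $\pi_2$, since $\pi_2(Y \vee S^2 \vee \cdots \vee S^2) = \mathbb{Z}[G]^s$ is freely generated by the sphere summands (as $Y$ is aspherical) and these map to a chosen basis of $\pi_2(P)$. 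Lifting $f$ to universal covers yields an equivariant map between simply-connected 2-dimensional CW complexes that is an isomorphism on $H_2$; both sides vanish in higher homology, so Whitehead's theorem gives that the lift, and hence $f$ itself, is a homotopy equivalence.

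The principal obstacle is the algebraic step, specifically the passage from the field-coefficient case of Theorem \ref{mainfirtheorem} to the integral statement that a $\leq n$-generated submodule of a free $\mathbb{Z}[G]$-module is free. This is precisely the role of Bass's local-to-global argument, originally used to show finitely generated projective modules over $\mathbb{Z}[F]$ are free: one first establishes stable freeness by controlling the reduced $K_0$ via reduction to each prime, then uses the $n$-fir property over fields to eliminate the stabilizing summand in the correct rank range. Adapting this argument from the free-group case to our hyperbolic-group setting, and verifying that the relevant $K$-theoretic obstructions vanish under the hypothesis $\mathcal{H}_{n,\delta}$, is the delicate part.
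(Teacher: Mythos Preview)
Your overall strategy---show that $J=\partial_2(C_2)$ and then $\pi_2(P)$ are free $\mathbb{Z}[G]$-modules, split off $\pi_2(P)$, then build the map from $Y\vee S^2\vee\cdots\vee S^2$ and apply Whitehead---is exactly the paper's. The difference is in how freeness over $\mathbb{Z}[G]$ is justified.

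Your sketch of the algebraic step has imprecisions. You write that $J$ is ``$\leq n$-generated over an $n$-fir,'' but $\mathbb{Z}[G]$ is \emph{not} known to be an $n$-fir under $\mathcal{H}_{n,\delta}$; only $\mathbb{K}[G]$ for fields $\mathbb{K}$ is (Theorem~\ref{mainfirtheorem}). Nor does the paper's local-to-global argument pass through stable freeness or $K_0$. Instead, the paper isolates the needed statement as Theorem~\ref{freezmodules} (proved via Proposition~\ref{bassprop}): if $M\subset\oplus\mathbb{Z}[G]$ is $n$-generated and the quotient $(\oplus\mathbb{Z}[G])/M$ is torsion-free as an abelian group (condition~$(\star)$), then $M$ is free. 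The proof uses freeness of $n$-generated submodules over $\mathbb{Q}[G]$ and each $\mathbb{F}_p[G]$ together with $\mathrm{GL}_m(\mathbb{F}_p[G])=\mathrm{GE}_m(\mathbb{F}_p[G])$ for $m<n$ to iteratively refine an approximate $\mathbb{Z}[G]$-basis; no $K$-theory enters. For $J$, condition~$(\star)$ holds because $C_1/J\cong\mathrm{im}(\partial_1)\hookrightarrow C_0$; for $\pi_2(P)$, it holds because $\pi_2(P)$ becomes a direct summand of $C_2$ once $J$ is free. So the ``delicate part'' you flag is exactly Theorem~\ref{freezmodules}, and you should invoke it (after checking $(\star)$) rather than the $K$-theoretic outline.
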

It is known (\cite{cohenlyndon}) that one-relator groups have geometric dimenension at most two. In our setting of groups acting with large displacement on hyperbolic spaces, we can show that ``few-relator'' groups have cohomological dimension $\leq 2$. 
\begin{theorem}
\label{fewrelator}
An $n$-relator group satisfying $\mathcal H_{n}$ has cohomological dimension $\leq 2$. 
\end{theorem}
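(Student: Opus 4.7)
My plan is to produce a length-$2$ free resolution of the trivial $\mathbb{Z}[G]$-module $\mathbb{Z}$ using the Lyndon-Crowell resolution attached to an $n$-relator presentation. Choose a presentation $G = \langle x_1, \ldots, x_m \mid r_1, \ldots, r_n\rangle$; let $F$ be free on $x_1, \ldots, x_m$ and $R \triangleleft F$ the normal closure of $r_1, \ldots, r_n$. The Lyndon-Crowell resolution gives an exact sequence of left $\mathbb{Z}[G]$-modules
\[
0 \to R^{\mathrm{ab}} \xrightarrow{\partial_2} \mathbb{Z}[G]^m \xrightarrow{\partial_1} \mathbb{Z}[G] \xrightarrow{\varepsilon} \mathbb{Z} \to 0,
\]
with $\partial_1(e_i)=\bar x_i-1$ and $\partial_2$ encoded by Fox derivatives. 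It is therefore enough to show that $R^{\mathrm{ab}}:=R/[R,R]$ is $\mathbb{Z}[G]$-projective (and in fact I will aim for free).

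The crucial observation is that $R^{\mathrm{ab}}$ is $n$-generated over $\mathbb{Z}[G]$: since $R$ is normally generated in $F$ by the $r_j$ and the conjugation action of $F$ on $R^{\mathrm{ab}}$ factors through $G$, the classes of $r_1, \ldots, r_n$ already generate $R^{\mathrm{ab}}$ as a $\mathbb{Z}[G]$-module. Via the injection $\partial_2$, $R^{\mathrm{ab}}$ is realized as an $n$-generated submodule of the free module $\mathbb{Z}[G]^m$. Granting that every such $n$-generated submodule of $\mathbb{Z}[G]^m$ is free---the natural rank-$m$ generalization of Theorem \ref{mainfirtheorem}(1)---the module $R^{\mathrm{ab}}$ is free, the Lyndon-Crowell sequence becomes a length-$2$ free resolution of $\mathbb{Z}$, and $\mathrm{cd}(G) \leq 2$ follows.

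The main obstacle is thus the promotion from ``$n$-generated ideals in $\mathbb{Z}[G]$ are free'' to ``$n$-generated submodules of $\mathbb{Z}[G]^m$ are free'' for arbitrary $m$. I expect this to drop out of the same displacement-based elimination algorithm that underlies Theorem \ref{mainfirtheorem}(1): the estimates used to shorten a generating set of an ideal should apply to vector-valued generators in $\mathbb{Z}[G]^m$ essentially coordinate-wise, since the relevant bounds depend only on translation lengths in $\mathcal{H}$ and on the number $n$ of generators---not on the rank of the ambient free module. Once this technical strengthening is in hand, the homological conclusion is immediate.
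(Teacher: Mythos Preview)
Your overall architecture—realize the relation module $R^{\mathrm{ab}}$ as an $n$-generated submodule of a free $\mathbb{Z}[G]$-module and show it is free, thereby truncating the resolution at length $2$—is precisely what the paper does (phrased there via the cellular chain complex of an aspherical complex with $n$ two-cells and Theorem~\ref{cdim}). However, you have mislocated the technical obstacle, and the statement you ``grant'' is actually false as written.

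The passage from ideals to submodules of a free module of rank $m$ is \emph{not} the difficulty: over a field this is a short induction by projecting to a coordinate (Theorem~\ref{freemodules}.2), and nothing there depends on $m$. The genuine gap is that Theorem~\ref{mainfirtheorem} and the elimination algorithm of Theorem~\ref{intromaintheorem} are stated and proved for $\mathbb{K}[G]$ with $\mathbb{K}$ a \emph{field}; they do not transfer to $\mathbb{Z}[G]$ by ``the same estimates applied coordinate-wise''. Integer coefficients are handled separately in the paper by a Bass-style local-to-global argument (Theorem~\ref{freezmodules}, Proposition~\ref{bassprop}), which takes as input the field results over $\mathbb{Q}$ and every $\mathbb{F}_p$ together with $\mathrm{GL}_m=\mathrm{GE}_m$, and which crucially requires the side condition~$(\star)$ that the ambient free module modulo $M$ be torsion-free as an abelian group. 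Some such condition is unavoidable: the ideal $(2,\,t-1)\subset\mathbb{Z}[t,t^{-1}]$ is a $2$-generated submodule of a free module that is not free, even though every field reduction is. In your situation $(\star)$ does hold, since $\mathbb{Z}[G]^m/R^{\mathrm{ab}}$ injects into $\mathbb{Z}[G]$ (it is the augmentation ideal) and is therefore torsion-free; once you verify this and invoke Theorem~\ref{freezmodules} rather than extrapolate the algorithm, your proof is complete and coincides with the paper's.
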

Finally, let us mention a consequence that can be thought of either as a generalization of Theorem \ref{fewrelator} to higher dimensions or as a generalization of (the $X=BG$ case of\footnote{See also Theorem \ref{essential}.}) Theorem \ref{manifold} to hyperbolic groups.
\begin{theorem}
\label{cellbound}
Assume that the group $G$ satisfies $\mathcal H_{n}$ and has cohomological dimension $d$. Then, every aspherical complex with fundamental group $G$ has more than $n$ cells in each dimension $0<k<d$. 
\end{theorem}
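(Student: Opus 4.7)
The plan is to argue by contradiction: assume some aspherical complex $Y$ with $\pi_1 Y = G$ has $c_k \leq n$ cells in some dimension $0 < k < d$ and use Theorem \ref{mainfirtheorem} to truncate the cellular resolution of the trivial module, contradicting $\mathrm{cd}(G) = d$. First I fix a field $\mathbb{K}$ with $\mathrm{cd}_{\mathbb{K}}(G) = d$; since the cohomological dimension over $\mathbb{Z}$ is realized by cohomology with coefficients in some prime field, such a $\mathbb{K}$ exists. Because $\widetilde Y$ is contractible, its cellular chain complex with $\mathbb{K}$-coefficients,
$$\cdots \to \mathbb{K}[G]^{c_{k+1}} \xrightarrow{\partial_{k+1}} \mathbb{K}[G]^{c_k} \xrightarrow{\partial_k} \mathbb{K}[G]^{c_{k-1}} \to \cdots \to \mathbb{K}[G]^{c_0} \to \mathbb{K} \to 0,$$
is a free resolution of the trivial $\mathbb{K}[G]$-module.

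The core step is the observation that $B := \mathrm{im}\,\partial_k \subseteq \mathbb{K}[G]^{c_{k-1}}$ is a submodule of a free module generated by the $c_k \leq n$ images of the standard basis. Invoking Theorem \ref{mainfirtheorem}---in the standard strengthening from ``$n$-generated ideals are free'' to ``$n$-generated submodules of every free $\mathbb{K}[G]$-module are free,'' which is the defining property of a Cohn $n$-fir and is directly produced by the algorithm established for Theorem \ref{mainfirtheorem}---one concludes that $B$ is a free $\mathbb{K}[G]$-module. By exactness, $B = \ker\partial_{k-1}$, so the spliced complex
$$0 \to B \to \mathbb{K}[G]^{c_{k-1}} \to \cdots \to \mathbb{K}[G]^{c_0} \to \mathbb{K} \to 0$$
is a free resolution of $\mathbb{K}$ of length $k-1$. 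Hence $\mathrm{cd}_{\mathbb{K}}(G) \leq k-1 < d$, contradicting the choice of $\mathbb{K}$.

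The hard part here is really the upgrade of Theorem \ref{mainfirtheorem} from ideals to arbitrary $n$-generated submodules of free modules: this is the equivalent formulation of the $n$-fir condition in Cohn's theory, but needs to be read off from the algorithmic proof of Theorem \ref{mainfirtheorem} rather than from the bare statement. A secondary issue is matching the coefficient ring; choosing $\mathbb{K}$ to realize $\mathrm{cd}(G) = d$ resolves it, since Theorem \ref{mainfirtheorem} holds over every field.
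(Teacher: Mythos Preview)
Your overall strategy---show that the image $B=\operatorname{im}\partial_k$ is a free module and thereby truncate the resolution to length $k$---is exactly what the paper does. But there is a genuine gap in your reduction step.

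You assert that ``the cohomological dimension over $\mathbb Z$ is realized by cohomology with coefficients in some prime field,'' i.e.\ that $\mathrm{cd}_{\mathbb Z}(G)=d$ forces $\mathrm{cd}_{\mathbb K}(G)=d$ for some prime field $\mathbb K$. This is not a standard fact and you give no argument for it. It is true that $\mathrm{cd}_{\mathbb K}(G)\leq \mathrm{cd}_{\mathbb Z}(G)$ for every field $\mathbb K$, but the reverse inequality $\mathrm{cd}_{\mathbb Z}(G)\leq\sup_{\mathbb K}\mathrm{cd}_{\mathbb K}(G)$ is exactly what would need to be proved, and I do not know a general reference for it. Without this, your argument only yields the (weaker) conclusion with $\mathbb K$-cohomological dimension in place of integral cohomological dimension---a version the paper explicitly singles out as ``easier to prove'' just before Theorem~\ref{essential}.

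The paper handles Theorem~\ref{cellbound} differently: it works over $\mathbb Z$ throughout and invokes Theorem~\ref{freezmodules}, which says that an $n$-generated submodule $B\subset\oplus\mathbb Z[G]$ with torsion-free quotient is free. That theorem is not a formal consequence of the field case; it is established via Bass's local-to-global method (Proposition~\ref{bassprop}), combining freeness over $\mathbb Q[G]$ and $\mathbb F_p[G]$ with the $\mathrm{GL}=\mathrm{GE}$ result. In the proof of Theorem~\ref{cdim} the torsion-free quotient hypothesis is checked by observing that $C_{k-1}/\partial(C_k)$ embeds in $C_{k-2}$ (or in $\mathbb Z$ when $k=1$). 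This is precisely the extra input that lets one conclude $\mathrm{cd}_{\mathbb Z}(G)\leq k$ rather than only $\mathrm{cd}_{\mathbb K}(G)\leq k$.

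Two smaller points. First, the resolution $0\to B\to C_{k-1}\to\cdots\to C_0\to\mathbb K\to 0$ has length $k$, not $k-1$; this does not affect the contradiction since $k<d$. Second, the ``upgrade'' from ideals to submodules of free modules that you allude to is stated and proved in the paper as Theorem~\ref{freemodules}.2, so you may simply cite it.
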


\subsection*{Algorithms}
The proof of the first part of Theorem \ref{mainfirtheorem} is based on an algorithm which can be seen as a geometric version of the euclidean algorithm. 
To describe it, let us first sketch the approach to the ``only if'' direction of Theorem \ref{ffir} given in Cohn's book \cite{cohnbook}. It consists of two distinct steps: 
\begin{itemize}
\item
Fix a basis for the free group $F$, let $F_+$ be the (monoid of) non-negative words in this basis and denote by $\mathbb K[F_+]\subset\mathbb K[F]$ the subring of the group algebra generated by these words. Cohn first shows that ideals in $\mathbb K[F_+]$ are free. (Corollary 2.5.2 and Theorem 2.4.6) 
\item
He then gives a localization procedure for passing from $\mathbb K[F_+]$ to $\mathbb K[F]$ that preserves the free ideal property. (Corollary 7.11.8)
\end{itemize}

Cohn measures size via filtrations. Recall that a {{\it filtration}} on a ring $R$ is a map $|\cdot|
: R\rightarrow \mathbb{N}\cup\{-\infty\}$  such that $|0|=-\infty$, $|1| = 0$, and for all $x,y\in R$
$
|x-y|\leqslant\max(|x|,|y|),
$
$
|xy|\leqslant|x| + |y|.
$

Given a generating set for a group $G$, the group algebra $\mathbb K[G]$ has a natural filtration by word length $l(g)$ of elements $g$ in $G$:
$$
\left|\sum_{g\in G}\lambda^g\cdot g\right|:=\max_{\lambda^g\not=0}l(g).
$$
On the ring $\mathbb K[F_+]$ of positive words in the free group, this filtration satisfies $|xy|=|x|+|y|$. 

The relevant notion of dependence says that some linear combination has smaller size than the maximum dictated by its terms: A family $\xi_1, \dots, \xi_n$ in $R$ is {\it $|\cdot|$-dependent} if there is a non-zero $(\alpha_1, \ldots, \alpha_n)\in R^n$ such that 
$$
\left|\sum_i \alpha_i \xi_i\right|<\max_i\left|\alpha_i\xi_i\right|.
$$
\begin{example}
This is the case if the family is linearly dependent in the usual sense.
\end{example}
The first step is accomplished via the following algorithmic theorem.
\begin{theorem}[Cohn \cite{cohnbook}]
Let $|\cdot|$ be the word length filtration of $\mathbb K[F_+]$. If $\xi_1,\dots,\xi_n$ in $\mathbb K[F_+]$ is a $|\cdot|$-dependent family then, up to reordering, there exist $\beta_2,\dots,\beta_n$ in $\mathbb K[F_+]$ such that 
$$
\left|\xi_1 + \sum_{i = 2}^n \beta_i \xi_i\right| < | \xi_1|.
$$
\end{theorem}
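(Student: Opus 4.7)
The plan is to work in the associated graded ring of $\mathbb{K}[F_+]$, which by multiplicativity $|xy|=|x|+|y|$ is the free associative algebra $\mathbb{K}\langle X\rangle$ on the chosen basis $X$ of $F$. Taking leading homogeneous parts of the given $|\cdot|$-dependence produces a nontrivial relation $\sum_{i\in I}\bar\alpha_i\bar\xi_i=0$ in $\mathbb{K}\langle X\rangle$, where $I=\{i:|\alpha_i\xi_i|=d\}$ and $d=\max_i|\alpha_i\xi_i|$.

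I would next fix a lexicographic order on the free monoid on $X$. Because for homogeneous elements in $\mathbb{K}\langle X\rangle$ the leading word of a product is the concatenation of the leading words, the lex-maximum word $w^\star$ achieved among the $\ell(\bar\alpha_i)\ell(\bar\xi_i)$ for $i\in I$ is attained on a subset $J\subset I$ with $|J|\geq 2$, and the $w^\star$-coefficient identity $\sum_{i\in J}c_{\alpha_i}c_{\xi_i}=0$ holds for the lex-leading coefficients. Reorder so that $\xi_1$ corresponds to the index of $J$ with maximal $|\xi_i|$; then for each $i\in J\setminus\{1\}$, $\ell(\bar\xi_i)$ is a suffix of $\ell(\bar\xi_1)$, so $\ell(\bar\xi_1)=s_i\ell(\bar\xi_i)$ for a word $s_i$. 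The scalars $\lambda_i=-c_{\alpha_i}/c_{\alpha_1}$ are then chosen so that $\xi_1^{(1)}:=\xi_1-\sum_{i\in J\setminus\{1\}}\lambda_i s_i\xi_i$ has the $\ell(\bar\xi_1)$-coefficient of its top homogeneous part cancelled; consequently either $|\xi_1^{(1)}|<|\xi_1|$ (and we are finished) or $|\xi_1^{(1)}|=|\xi_1|$ with strictly smaller lex-leading word.

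In the remaining case I would recurse by induction on the lex pair $(d,w^\star)$, with base case $d=0$ being ordinary linear dependence of scalars. Substituting $\xi_1$ back into the original dependence yields a new relation on the updated tuple; the key observation is that the same scalars $\lambda_i$ simultaneously cancel the leading term of $\bar\alpha_i$ for $i\in J\setminus\{1\}$, so every contribution $\ell(\bar\alpha_i^{(1)}\bar\xi_i^{(1)})$ is strictly less than $w^\star$, yielding $(d^{(1)},w^{\star(1)})<(d,w^\star)$ in lex order. The induction hypothesis then produces a reduction of some element of the new tuple, and since only $\xi_1$ was altered (via a triangular substitution), this translates back, with unit coefficient at the pivot, to the desired reduction of the corresponding element of the original tuple; relabeling finishes the proof. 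The main obstacle I expect is verifying that the single scalar choice $\lambda_i=-c_{\alpha_i}/c_{\alpha_1}$ cancels both the $\bar\xi_1$ leading coefficient and the $\bar\alpha_i$ leading coefficients at once---this symmetric cancellation is the algebraic core of the weak algorithm in this multiplicative setting and is what guarantees the strict decrease of $(d,w^\star)$ at each step.
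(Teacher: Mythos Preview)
The paper does not contain its own proof of this statement; it is quoted from Cohn's book as background and motivation for the hyperbolic analogue (Theorem~\ref{intromaintheorem}). So there is no in-paper argument to compare against.

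Your outline is essentially Cohn's weak algorithm for the free algebra $\mathbb{K}[F_+]\cong\mathbb{K}\langle X\rangle$, and it is correct. Two clarifying remarks. First, the ``symmetric cancellation'' you flag as the main obstacle does go through: from $\ell(\bar\alpha_1)\ell(\bar\xi_1)=w^\star=\ell(\bar\alpha_i)\ell(\bar\xi_i)$ and $\ell(\bar\xi_1)=s_i\ell(\bar\xi_i)$ one gets the dual prefix relation $\ell(\bar\alpha_i)=\ell(\bar\alpha_1)s_i$, and then the single choice $\lambda_i=-c_{\alpha_i}/c_{\alpha_1}$ kills the $\ell(\bar\xi_1)$-coefficient of $\bar\xi_1^{(1)}$ (via the identity $\sum_{i\in J}c_{\alpha_i}c_{\xi_i}=0$) and the $\ell(\bar\alpha_i)$-coefficient of $\bar\alpha_i^{(1)}=\alpha_i+\lambda_i\alpha_1 s_i$ (directly). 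Second, a small imprecision: in the ``remaining case'' $|\xi_1^{(1)}|=|\xi_1|$ you have $|\alpha_1\xi_1^{(1)}|=d$, so $d^{(1)}=d$ and the induction is really on $w^\star$ alone for fixed $d$; the base case $d=0$ is only reached if you start there. Termination is unaffected since there are only finitely many words of length $d$. It is also worth making explicit that when the inductive reduction lands on an index $j\neq 1$, substituting $\xi_1^{(1)}=\xi_1-\sum_{i\in J\setminus\{1\}}\lambda_i s_i\xi_i$ back yields a reduction of $\xi_j$ in the original family with pivot coefficient $1$, so the final relabeling delivers the stated conclusion.
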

For a finitely generated ideal in $\mathbb K[F_+]$, one can repeatedly apply this theorem to decrease the size of members of a finite generating set until one arrives at a generating set that does not satisfy any dependence relations and hence forms a basis, showing the ideal is free. Doing a bit more work, Cohn uses this theorem to show infinitely generated ideals in $\mathbb K[F_+]$ are free, as well (Thm. 2.4.6). Finally, Cohn's localization proceedure shows the same is true for $\mathbb K[F]$. 


In \cite{hogangeloni}, Hog-Angeloni gave a beautiful, geometric version of this algorithm that applies to the entire group algera $\mathbb K[F]$. It can be used to bypass the localization step if one is interested exclusively in finitely generated ideals. Hog-Angeloni's proof goes by looking at the action of $F$ on its Cayley graph, which is a tree $\mathcal T$. It uses the same notion of dependence as Cohn's proof 
but the natural notion of size of group ring elements relevant to her argument is the {\it diameter} 
$$
\mathrm{diam}\left(\sum_{g\in G}\lambda^g\cdot g\right):=\max_{\lambda^g\not=0\not=\lambda^h}l(g^{-1}h). 
$$
\begin{remark}
Note that $|\xi|=0$ means $\xi \in\mathbb{K}^*$, while $\mathrm{diam} (\xi) = 0$ means $\xi=\lambda g$ for a non-zero $\lambda\in \mathbb{K}^*$ and group element $g \in F$. The diameter is invariant by left translation, while $|\cdot|$ is not. 
\end{remark}


\begin{theorem}[Hog-Angeloni \cite{hogangeloni}] 
Let $|\cdot|$ be the word length filtration of $\mathbb K[F]$. 
If $\xi_1,\dots,\xi_n$ in $\mathbb K[F]$ is a $|\cdot|$-dependent family then, up to reordering, there exist $\beta_2,\dots,\beta_n$ in $\mathbb K[F]$ such that
$$
\mathrm{diam}\left(\xi_1+\sum_{i=2}^n\beta_i\xi_i\right)<\mathrm{diam}(\xi_1).
$$
\end{theorem}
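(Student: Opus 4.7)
The plan is to translate so that the $|\cdot|$-filtration sees the diameter of $\xi_1$, then run a tree-adapted Cohn-style reduction. View each $\xi \in \mathbb K[F]$ as a finitely supported $\mathbb K$-valued function on the vertex set $F$ of the Cayley tree $\mathcal T$: the convex hull $C(\xi) \subseteq \mathcal T$ is a finite subtree whose diameter equals $\mathrm{diam}(\xi)$, while $|\xi|$ is the largest distance from $1$ to a vertex of $C(\xi)$. Reorder the hypothesis $\sum_i \alpha_i \xi_i = \eta$ (with $|\eta| < M := \max_i |\alpha_i \xi_i|$) so that $|\alpha_1 \xi_1| = M$, set $D := \mathrm{diam}(\xi_1)$, and pick a leaf $v$ of $C(\xi_1)$ realizing $D$ together with another leaf $w$.

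First I would translate. Replacing $\xi_i$ by $v^{-1}\xi_i$ and $\alpha_i$ by $\alpha_i v$ preserves each product $\alpha_i \xi_i = (\alpha_i v)(v^{-1}\xi_i)$ and hence the sum $\eta$, so the new family is still $|\cdot|$-dependent. In this frame $1 \in \mathrm{supp}(\xi_1)$ is an extremal leaf of $C(\xi_1)$, so $|\xi_1| = D$, and all of $\mathrm{supp}(\xi_1) \setminus \{1\}$ lies in a single branch $B$ at $1$ in $\mathcal T$, namely the branch containing $v^{-1}w$.

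Next, I would produce $\beta_2, \dots, \beta_n \in \mathbb K[F]$ so that $\xi_1' := \xi_1 + \sum_{i \geq 2} \beta_i \xi_i$ satisfies both $|\xi_1'| < D$ \emph{and} $\mathrm{supp}(\xi_1') \subseteq \{1\} \cup B$. This is a branchwise Cohn reduction. Rooted at $1$, the sub-monoid of $F$ consisting of elements whose geodesic from $1$ enters $B$ plays the role of $F_+$, and Cohn's theorem applies along this branch. The tree-geometric point is that distinct branches at $1$ do not interfere in multiplication: if the $\beta_i$'s are chosen so that the level-$D$ contribution of each $\beta_i \xi_i$ lives in $B$, they cancel the level-$D$ part of $\xi_1$ without spilling support into any other branch at $1$. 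Since $\mathrm{supp}(\xi_1') \subseteq \{1\} \cup B$ and $|\xi_1'| < D$, the hull $C(\xi_1')$ sits inside $\{1\} \cup (B \cap B_{\mathcal T}(1, D-1))$, a subtree of diameter at most $D - 1$, so $\mathrm{diam}(\xi_1') < D$. Untranslating (replacing each $\beta_i$ by $v \beta_i v^{-1}$) gives the conclusion for the original family.

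The main obstacle is this branchwise reduction. The bare Cohn algorithm only controls $|\cdot|$, and one must upgrade it to also respect the branch $B$. This requires analyzing how the leading terms of the $\alpha_i \xi_i$ (those at distance $M$ from $1$) distribute among the branches at $1$, and choosing the $\beta_i$'s so that each new piece $\beta_i \xi_i$ lands in $B$ and cancels against $\alpha_1 \xi_1$ cleanly. The tree structure of $\mathcal T$ — in particular, that multiplication never mixes disjoint branches at a common vertex — is what makes this localization possible, and is precisely the feature, absent for $\mathbb K[F_+]$, that Hog-Angeloni uses to bypass Cohn's localization step.
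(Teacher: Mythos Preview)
Your proposal has a genuine gap: the index you single out for reduction is chosen incorrectly. You reorder so that $|\alpha_1\xi_1|=M$, but the $\xi_i$ whose diameter can be reduced is the one of \emph{largest diameter}, not the one maximizing $|\alpha_i\xi_i|$. These are different. Take $F=\langle a\rangle$, $\xi_1=1+a$, $\xi_2=1+a^2$, $\alpha_1=1+a$, $\alpha_2=-1$. Then $\alpha_1\xi_1+\alpha_2\xi_2=2a$ has $|\cdot|=1<2=|\alpha_1\xi_1|=|\alpha_2\xi_2|$, so the family is $|\cdot|$-dependent and your rule permits choosing index $1$. But $\mathrm{diam}(\xi_1)=1$, and no $\beta_2$ makes $(1+a)+\beta_2(1+a^2)$ a monomial or zero (the support of $\beta_2(1+a^2)$ is a union of pairs at distance $2$, which can never leave a single residue from $\{0,1\}$). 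So $\mathrm{diam}(\xi_1)$ cannot drop. By contrast $\mathrm{diam}(\xi_2)=2$ and $\xi_2-a\xi_1=1-a$ has diameter $1$. The correct selection principle is ``largest radius (diameter)'', which is exactly what the paper does.

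Even after fixing the index, your ``branchwise Cohn reduction'' is the entire content of the argument and you do not carry it out. Cohn's algorithm lives in $\mathbb K[F_+]$; you are in $\mathbb K[F]$, and asserting that one can choose the $\beta_i$ so that each $\beta_i\xi_i$ contributes only inside the branch $B$ while also cancelling the level-$D$ part of $\xi_1$ is precisely the claim that needs proof. The paper's route is quite different and avoids both translation and Cohn: it builds the extremal graph $\Gamma_0(\Xi)$ on the translates $\alpha_i^g g\xi_i$, takes the component $\hat\Gamma_0$ containing a vertex $v_*$ of the largest-radius color, shows (using the tree geometry) that $v_*$ is the unique such vertex and that every center $c_v$ for $v\in\hat\Gamma_0$ lies on $[o,c_{v_*}]$, so all supports sit in $B(c_{v_*},r_1)$; the $0$-relation then confines the support of $\sum_{v\in\hat\Gamma_0}\xi_v$ to $B(o,d-\alpha)\cap B(c_{v_*},r_1)$, and the two-ball diameter bound gives $\mathrm{diam}<2r_1=\mathrm{diam}(\xi_{v_*})$. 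The $\beta_i$ are thus explicit subsums of the given $\alpha_i$, with no branch bookkeeping needed.
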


An attentive reader of \cite{hogangeloni} can check that Hog-Angeloni does not use that the group is free nor that the tree is a Cayley graph, but only the fact that the group acts freely on an $\mathbb R$-tree. We build on her geometric approach, replacing $F$ acting on the tree $\mathcal T$ by a group $G$ that acts on a hyperbolic space $\mathcal H$ with large minimum displacement. 
In our approach we will use the natural filtration on $\mathbb K[G]$ obtained from the action of $G$ on the hyperbolic space $\mathcal H$. 

\begin{notation} 
Let $\mathcal H$ be a geodesic metric space and $o$ a basepoint (or origin). Denote by $|p-q|$ the distance between two points in $\mathcal H$, and by $|p|=|p-o|$ the distance to the origin. If $\mathcal X\subset\mathcal H$ is a finite subset then $|\mathcal X|=\max_{p\in\mathcal H}|p|$ is called its absolute value and $\mathrm{diam}(\mathcal X)=\max_{p,q\in\mathcal X}|p-q|$ its diameter. 

If a group $G$ acts isometrically on $\mathcal H$ and $\xi=\sum_{g\in G}\lambda^g\cdot g$ is an element in the group algebra, denote by $\mathcal X=\{g\cdot o\mid \lambda^g\not=0\}$ the orbit of the basepoint under group elements appearing with non-zero coefficient in $\xi$ (i.e. under group elements in the algebraic support of $\xi$) and call it the geometric support of $\xi$.
The diameter $\mathrm{diam}(\xi)$ and absolute value $|\xi|$ are $\mathrm{diam}(\xi)=\mathrm{diam}(\mathcal X)$ and $|\xi|=|\mathcal X|$. By convention, $\mathrm{diam} (0) = | 0 | = - \infty$. 

The minimal displacement of the action of $G$ on $\mathcal H$ is $\inf_{g\in G-\{1\}, p\in\mathcal H}|g\cdot p-p|$. 
\end{notation}

Now, we can state our hyperbolic version of Hog-Angeloni's theorem. 
\begin{theorem}
\label{intromaintheorem}
Set $\delta_n=(15n+2\log_2((n+1)!))\delta$. Let the group $G$ act on a $\delta$-hyperbolic space $\mathcal H$ with minimum displacement $>4\delta_n+(10+2n)\delta$. If $\xi_1,\dots,\xi_n\in\mathbb K[G]$ and there is a non-zero $(\alpha_1,\dots,\alpha_n)\in\mathbb K[G]^n$ such that 
$$
\left|\sum_i\alpha_i\xi_i\right|<\max_i|\alpha_i\xi_i|-\delta_n, 
$$
then, up to re-ordering, there exist $\beta_2,\dots,\beta_n$ in $\mathbb K[G]$ such that
$$
\mathrm{diam} \left(\xi_1+\sum_{i = 2}^n \beta_i \xi_i\right)<\mathrm{diam}(\xi_1)-\delta.
$$
\end{theorem}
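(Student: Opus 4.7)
The plan is to proceed by induction on $n$. The base case $n=1$ is vacuous, since the hypothesis would require $|\alpha_1\xi_1| < |\alpha_1\xi_1| - \delta_1$, which is impossible. For the inductive step, the strategy is to translate the algebraic cancellation $|\sum_i \alpha_i\xi_i| < \max_i|\alpha_i\xi_i| - \delta_n$ into a geometric coincidence in $\mathcal H$, and then to promote this coincidence, by way of large minimal displacement together with $\delta$-hyperbolic thin triangles, into an algebraic identity that reduces the diameter of $\xi_1$.

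After reordering so that $M := |\alpha_1\xi_1|$ is maximal, the geometric support of $\alpha_1\xi_1$ contains points near the sphere of radius $M$ about the origin, while the geometric support of $\sum_i\alpha_i\xi_i$ lies inside the ball of radius $M - \delta_n$. The geometric support of each $\alpha_i\xi_i$ decomposes as a union of translates $\bigcup_{g\in\mathrm{supp}(\alpha_i)} g\cdot\mathcal X_{\xi_i}$, and the hypothesis on the minimum displacement — large compared to $\max_i\mathrm{diam}(\xi_i)$ — ensures these translates are well separated and that the action is free. Consequently, each extremal point $gh\cdot o$ of $\alpha_1\xi_1$ (within $\delta_n$ of the sphere of radius $M$) must be cancelled by a matching extremal point $g'h'\cdot o$ coming from some $\alpha_j\xi_j$, and freeness turns the equality $gh\cdot o = g'h'\cdot o$ into a group equation $gh = g'h'$. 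Setting $\gamma := g^{-1}g'$ identifies an element of $G$ sending a portion of $\mathrm{supp}(\xi_j)$ to a portion of $\mathrm{supp}(\xi_1)$ near the extremal direction. Hyperbolicity is then used to propagate this single coincidence: the geodesics from $o$ into the two tips are $\delta$-close, so the entire extremal layer of $\xi_1$ matches a translate of the extremal layer of $\xi_j$, and subtracting a scalar multiple $\lambda\gamma\xi_j$ removes this layer. Taking $\beta_j := -\lambda\gamma$ and the remaining $\beta_i := 0$ achieves $\mathrm{diam}(\xi_1 + \beta_j\xi_j) < \mathrm{diam}(\xi_1) - \delta$.

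The main obstacle is that the cancellation of a single extremal point in $\alpha_1\xi_1$ may not come from a single other term but from several $\alpha_j\xi_j$ cooperating at the same extremal direction. To handle this, one isolates the sub-family of indices whose clusters meet the extremal ball and applies the inductive hypothesis to the restricted dependence there, at the cost of shrinking the allowed cancellation gap by an additive $O(n)\delta$ at each application. Summed across the tower of reductions, these losses produce exactly the quadratic bound $\delta_n = (n^2+10n)\delta$ assumed in the hypothesis, and the careful tracking of these constants through the hyperbolic thin-triangle estimates as coincidences propagate along geodesics is the technical heart of the argument.
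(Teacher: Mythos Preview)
Your sketch contains a fundamental misreading of the hypothesis. You write that the minimum displacement is ``large compared to $\max_i\mathrm{diam}(\xi_i)$'' and use this to claim the translates $g\cdot\mathcal X_{\xi_i}$ are well separated. But the displacement bound $4\delta_n+(10+2n)\delta$ depends only on $n$ and $\delta$, not on the $\xi_i$; the $\xi_i$ may have arbitrarily large diameter, and the translates $g\cdot\mathcal X_{\xi_i}$ for distinct $g\in\mathrm{supp}(\alpha_i)$ can overlap heavily. So the passage from a pointwise coincidence $gh\cdot o=g'h'\cdot o$ to the assertion that ``the entire extremal layer of $\xi_1$ matches a translate of the extremal layer of $\xi_j$'' is unsupported: hyperbolicity tells you geodesics to nearby extremal points fellow-travel near the origin, but it gives no control over which translate of which $\xi_j$ is responsible for cancelling each extremal point, nor any reason these should all be the \emph{same} translate.

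The paper's argument confronts exactly this difficulty with machinery your sketch does not contain. It introduces the extremal graph $\Gamma_\mu(\Xi)$ on the full expanded family $(\alpha_i^g g\xi_i)$, proves a combinatorial bound (embedded paths have length $\le 2^n-2$) that in turn forces each connected component to contain at most one vertex of each ``large'' color, and then shows the whole component sits inside a single ball of radius $\approx r_1$ centered at a well-chosen point $c^*$. The diameter reduction comes from intersecting this ball with the ball $B(o,d-\delta_n)$ via the two-ball lemma, not from subtracting a single monomial translate. The induction is used only in contrapositive form (a minimality assumption: no sub-family on fewer colors already satisfies a $\delta_{n-1}$-relation), not by directly invoking a diameter reduction on a sub-family as you suggest. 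Your plan, as written, does not supply a mechanism to control which and how many translates meet the extremal region, and without that the argument cannot close.
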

\begin{remark}
Theorem \ref{intromaintheorem} will be used to replace the euclidean algorithm in the classical study of ideals, submodules of free modules, and matrices over euclidean rings. This is the key tool needed to obtain the geometric and algebraic applications (Theorems \ref{mainfirtheorem} through \ref{cellbound}) as we shall see in the last section. 
\end{remark}

\subsection*{Acknowledgements}
We would like to thank Misha Gromov for his interest in this work and for mentioning the possibility of extending our results to essential manifolds. G.A. would like to thank the Max Planck Institut f\"ur Mathematik for its hospitality and financial support. We would like to thank the referee for pointing out a gap in an earlier version of the proof of Theorem 40, for their useful comments, and for highlighting an interesting perspective concerning the structure of the ring $\mathbb K[\mathbb Z\times G]$ (see \ref{zsubsection}).

\section{Hyperbolic preliminaries\label{hypsection}}
Fix a $\delta$-hyperbolic metric space $\mathcal H$ with basepoint $o$. See \cite{gromovhyperbolic},\cite{cdp}, or \cite{bhbook} for background on hyperbolicity.

\subsubsection*{Radii and centers}
Let $\mathcal X$ be a bounded subset in the metric space $\mathcal H$. Denote by 
$$
r(\mathcal X):=\inf\{r\mid\mbox{there is } c \mbox{ for which } B(c,r)\supset\mathcal X\}
$$ 
the infimum of radii of closed balls containing $\mathcal X$. We call it the {\it radius} of $\mathcal X$. If $\mathcal H$ is a proper metric space (i.e. if its closed, bounded sets are compact), then this infimum is realized and there is a closed ball of radius $r(\mathcal X)$ containing $\mathcal X$. If $\mathcal H$ is a complete, $\mathrm{CAT}(0)$ space, then there is a unique such ball. In general, we only have for any positive $\epsilon$ a closed ball $B(c,r(\mathcal X)+\epsilon)$ containing $\mathcal X$. We call such a $c$ an $\epsilon$-center of $\mathcal X$.


If $\mathcal H$ is an arbitrary metric space, one can embed $\mathcal H$ isometrically in a space in which every finite subset has a $0$-center. Let $\omega$ be a non-principal ultrafilter; the ultralimit $\mathcal H^\omega$ of the constant sequence $(\mathcal H, x_0, d)$ satisfies this property, as can be easily checked. Furthermore, if $\mathcal H$ is geodesic and $\delta$-hyperbolic then the ultralimit $\mathcal H^{\omega}$ is, as well, and every isometry of $\mathcal H$ extends to an isometry of $H^\omega$ with the same infimum displacement.

\subsubsection*{Gromov products}
Recall (\cite{gromovhyperbolic,cdp,bhbook}) the definition of the Gromov product
$$
\left<p,q\right>_r:={1\over 2}(|p-r|+|q-r|-|p-q|).
$$
To simplify notation, recall that the distance from $p$ to the origin $o$ is denoted by $|p|$ and set $\left<p,q\right>:=\left<p,q\right>_o$. First, we give an estimate for the Gromov product of an $\epsilon$-center of $\mathcal X$ with a point of $\mathcal X$ that follows directly from the triangle inequality.
\begin{lemma}
\label{fellowtravel1}
Let $p$ be a point in a set $\mathcal X$ with $\epsilon$-center $c$ and radius $r$. Then
$$
|c|\geq\left<p,c\right>\geq{|\mathcal X|+|p|\over 2}-r-\epsilon.
$$
\end{lemma}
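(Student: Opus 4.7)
The lemma asks for two inequalities about $\langle p,c\rangle = \tfrac12(|p|+|c|-|p-c|)$, and both reduce to careful bookkeeping with the triangle inequality; no hyperbolicity is needed.

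For the left inequality $|c|\geq\langle p,c\rangle$, I would simply rewrite this as $|c|+|p-c|\geq|p|$, which is the triangle inequality applied to the triple $o,c,p$. So this side is immediate.

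For the right inequality, the plan is to lower-bound $|c|$ and upper-bound $|p-c|$ using the defining properties of an $\epsilon$-center. First, since $c$ is an $\epsilon$-center of $\mathcal X$ and $p\in\mathcal X$, we have $p\in B(c,r+\epsilon)$, so
\[
|p-c|\leq r+\epsilon.
\]
Second, for any $q\in\mathcal X$ the same containment gives $|q-c|\leq r+\epsilon$, and the triangle inequality yields $|c|\geq|q|-(r+\epsilon)$. Taking $q\in\mathcal X$ with $|q|$ arbitrarily close to $|\mathcal X|$ (or equal, if the max is attained), we conclude
\[
|c|\geq|\mathcal X|-r-\epsilon.
\]
Substituting both bounds into the definition of the Gromov product,
\[
\langle p,c\rangle=\tfrac12\bigl(|p|+|c|-|p-c|\bigr)\geq\tfrac12\bigl(|p|+|\mathcal X|-(r+\epsilon)-(r+\epsilon)\bigr)=\tfrac{|\mathcal X|+|p|}{2}-r-\epsilon,
\]
as required.

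There is no real obstacle here: the statement is essentially two applications of the triangle inequality together with the definition of an $\epsilon$-center. The only subtle point is that $|\mathcal X|$ is defined as a supremum (which need not be attained when $\mathcal H$ is not proper), so strictly speaking the bound $|c|\geq|\mathcal X|-r-\epsilon$ should be proved by taking $q\in\mathcal X$ with $|q|>|\mathcal X|-\eta$ for arbitrary $\eta>0$ and then letting $\eta\to 0$; since the resulting inequality is a non-strict one, this limiting step is harmless.
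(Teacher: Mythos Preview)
Your proof is correct and follows essentially the same route as the paper: both establish the left inequality directly from the triangle inequality, then bound $|p-c|\leq r+\epsilon$ and $|c|\geq|\mathcal X|-r-\epsilon$ and substitute into the Gromov product. Your remark about the supremum is unnecessary here since in the paper $\mathcal X$ is always a finite set (so $|\mathcal X|$ is an attained maximum), but it does no harm.
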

\begin{proof}
The triangle inequality $|c|\geq|p|-|c-p|$ implies
\begin{eqnarray*}
|c|&\geq&{1\over 2}\left(|c|+|p|-|c-p|\right)\\
&\geq&|p|-|c-p|\\
&\geq&|p|-r-\epsilon.
\end{eqnarray*}
This proves the left inequality and, since it is true for all $p\in\mathcal X$, shows $|c|\geq|\mathcal X|-r-\epsilon$. Using this inequality to bound the Gromov product we get:
\begin{eqnarray*}
\left<p,c\right>&=&{1\over 2}(|c|+|p|-|p-c|)\\
&\geq&{1\over 2}((|\mathcal X|-r-\epsilon)+|p|-(r+\epsilon)).
\end{eqnarray*}
\end{proof}

\subsubsection*{Thin triangles and projections in hyperbolic spaces}
\begin{figure}[h!]
\label{thintriangle}
\centering
\includegraphics[scale=0.2]{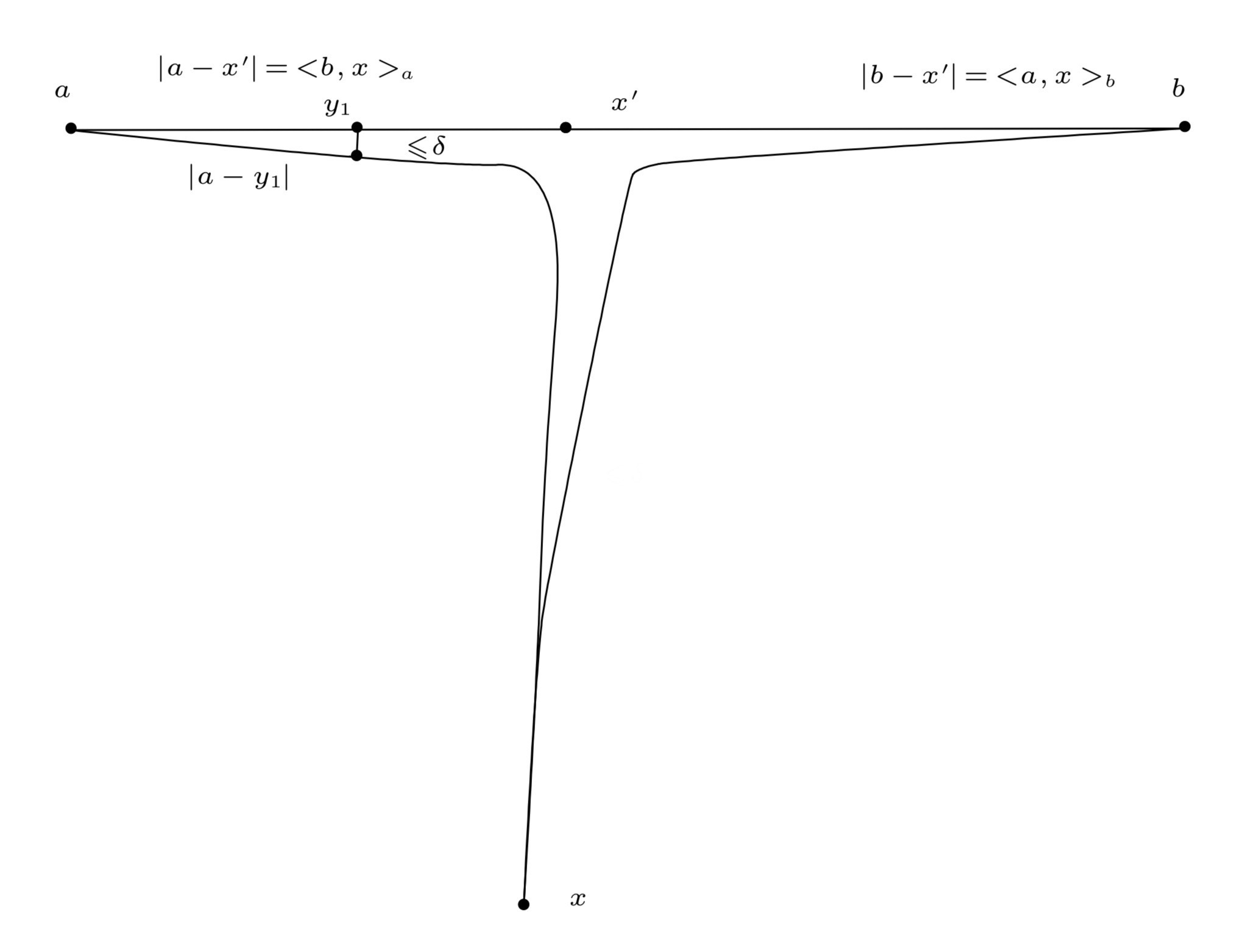}
\caption{A thin triangle}
\end{figure}
In a $\delta$-hyperbolic space, any geodesic triangle is $\delta$-thin. This means that if $[r,p,q]$ is a triangle then two (oriented) geodesics $[r,p]$ and $[r,q]$ parametrized by arc length $p(t),q(t)$ remain $\delta$-close (i.e. $|p(t)-q(t)|\leq\delta$) until $t=\left<p,q\right>_r$.
We will repeatedly use the following consequence of this: Given a segment $[a,b]$ and a point $x\in\mathcal H$, the {\it projection of $x$ to $[a,b]$} is the point $x'$ on $[a,b]$ such that $|a-x'|=\left<b,x\right>_a$.\footnote{This is also the point such that $|b-x'|=\left<a,x\right>_b$, since $\left<a,x\right>_b+\left<b,x\right>_a=|a-b|$.} Then, for points $y_1$ in the initial subsegment $[a,x']$ of $[a,b]$ we have a $\delta$-converse to the triangle inequality (see Figure 1):
\begin{equation}
\label{conversetriangle}
|x-y_1|+|y_1-a|\leq|x-a|+\delta,
\end{equation}
while for points $y_2$ in the subsegment $[x',b]$ we have 
$$
|x-y_2|+|y_2-b|\leq|x-b|+\delta.
$$

\subsubsection*{Diameter vs radius}
First, we note that in a $\delta$-hyperbolic space---just like in a tree---the diameter and radius are closely related. More precisely, 
\begin{lemma}
\label{diamvrad}
Let $[a,b]$ be a diameter-realizing segment of a finite set $\mathcal X$ with radius $r$ and let $m$ be the midpoint of $[a,b]$. Then $\mathcal X\subset B\left(m,{|a-b|\over 2}+\delta\right)$. In particular, $m$ is a $\delta$-center of $\mathcal X$ and
$$
{|a-b|\over 2}\leq r\leq {|a-b|\over 2}+\delta.
$$
\end{lemma}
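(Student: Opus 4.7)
The plan is to prove the inclusion $\mathcal X\subset B(m,|a-b|/2+\delta)$ directly from the $\delta$-thin triangle property (in its projection form recalled just before the statement), from which the rest of the lemma is immediate.

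Fix any $x\in\mathcal X$ and let $x'$ be the projection of $x$ onto the segment $[a,b]$. The midpoint $m$ lies either in $[a,x']$ or in $[x',b]$. In the first case, applying the $\delta$-converse to the triangle inequality recalled above (with $y_1=m$) gives
\[
|x-m|+|m-a|\leq|x-a|+\delta,
\]
so $|x-m|\leq|x-a|-|a-b|/2+\delta$. Since $[a,b]$ realizes the diameter of $\mathcal X$, we have $|x-a|\leq|a-b|$, and hence $|x-m|\leq|a-b|/2+\delta$. If instead $m\in[x',b]$, the symmetric inequality $|x-m|+|m-b|\leq|x-b|+\delta$ together with $|x-b|\leq|a-b|$ gives the same bound. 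Either way, every point of $\mathcal X$ lies in $B(m,|a-b|/2+\delta)$.

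This shows that $m$ is a $\delta$-center of $\mathcal X$, which immediately gives the upper bound $r\leq|a-b|/2+\delta$. The lower bound $|a-b|/2\leq r$ is a general fact valid in any metric space: if $B(c,\rho)\supset\{a,b\}$ then $|a-b|\leq|a-c|+|c-b|\leq 2\rho$, so taking the infimum over radii $\rho$ of balls containing $\mathcal X$ yields $|a-b|/2\leq r$.

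The argument is essentially routine once one notices that the only possible subtlety---the fact that the projection $x'$ depends on $x$ and can sit on either side of $m$---is harmlessly resolved by the case split above. So there is no real obstacle; the main point is to recognize that the midpoint of a diameter-realizing segment is already a good approximate center, with the $\delta$ loss coming from a single application of thin-triangle geometry.
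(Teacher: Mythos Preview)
Your proof is correct and follows essentially the same route as the paper's: both project a point of $\mathcal X$ onto $[a,b]$, split into cases according to which side of $m$ the projection lands on, and apply a single thin-triangle inequality together with the diameter bound. The only cosmetic difference is that the paper handles one case by symmetry (``without loss of generality'') whereas you treat both explicitly, and you spell out the elementary lower bound $|a-b|/2\le r$ that the paper simply asserts.
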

\begin{proof}
The left inequality holds in any geodesic space. For the right one, let $p$ be a point in  $\mathcal X$ and assume (without loss of generality) that its projection to $[a,b]$ is on $[m,b]$. Then
\begin{eqnarray*}
{|a-b|\over 2}+|m-p|&=&|a-m|+|m-p|\\
&\leq&|a-p|+\delta\\
&\leq&|a-b|+\delta,
\end{eqnarray*}
whence the result.
\end{proof}
\subsubsection*{Center vs midpoint}
Next, we observe that an $\epsilon$-center is $(2\delta+\epsilon)$-close to the midpoint of any diameter realizing segment.
\begin{lemma}
\label{centervsmidpoint}
Let $m$ be the midpoint of a diameter $[a,b]$ of a finite set $\mathcal X$ with $\epsilon$-center $c$. Then 
$$
|c-m|\leq \epsilon+2\delta.
$$  
\end{lemma}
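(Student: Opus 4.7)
The plan is to combine two facts already developed in the excerpt: the projection/$\delta$-converse-to-triangle-inequality technique, and Lemma \ref{diamvrad} which bounds the radius by half the diameter plus $\delta$. The target bound is additive in $\epsilon$ and $\delta$, which strongly suggests a one-shot application of the $\delta$-converse at the point $m$.

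Concretely, let $c'$ denote the projection of $c$ onto the segment $[a,b]$. Then $c'$ divides $[a,b]$ into the two subsegments $[a,c']$ and $[c',b]$, and the midpoint $m$ lies in one of them; by the symmetric role of $a$ and $b$ I may assume $m\in [a,c']$. Applying the $\delta$-converse to the triangle inequality with $y_1=m$ and $x=c$ gives
$$
|c-m| + |m-a| \leq |c-a| + \delta.
$$
Since $m$ is the midpoint of $[a,b]$, $|m-a|=|a-b|/2$, so
$$
|c-m| \leq |c-a| - \tfrac{|a-b|}{2} + \delta.
$$

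To finish, I need to bound $|c-a|$ from above. Since $a\in\mathcal X$ and $c$ is an $\epsilon$-center, $|c-a|\leq r(\mathcal X)+\epsilon$, and Lemma \ref{diamvrad} yields $r(\mathcal X) \leq |a-b|/2 + \delta$. Plugging this in,
$$
|c-m| \leq \left(\tfrac{|a-b|}{2} + \delta + \epsilon\right) - \tfrac{|a-b|}{2} + \delta = 2\delta + \epsilon,
$$
which is exactly the claim.

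There is essentially no obstacle: the only subtle point is choosing which endpoint of the diameter to apply the $\delta$-converse at, which is dictated by which side of $c'$ the midpoint $m$ lies on — and by the symmetry $a\leftrightarrow b$ we may always assume $m\in[a,c']$, reducing the argument to a single inequality. All other inputs are just the triangle inequality and Lemma \ref{diamvrad}.
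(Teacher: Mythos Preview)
Your proof is correct and follows essentially the same route as the paper's: both assume (up to swapping $a$ and $b$) that the projection of $c$ onto $[a,b]$ lies on the $b$-side of $m$, apply the $\delta$-converse to the triangle inequality at $m$ to get $|c-m|+|a-b|/2\leq |c-a|+\delta$, bound $|c-a|\leq r+\epsilon$, and finish with $r\leq |a-b|/2+\delta$ from Lemma~\ref{diamvrad}. The only cosmetic difference is that you invoke Lemma~\ref{diamvrad} explicitly, whereas the paper just uses its conclusion in the final step.
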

\begin{proof}
Without loss of generality, the projection of $c$ to $[a,b]$ is on $[m,b]$, so
\begin{eqnarray*}
|c-m|+{|a-b|\over 2}&=&|c-m|+|m-a|\\
&\leq&|c-a|+\delta\\
&\leq&r+\epsilon+\delta
\end{eqnarray*} 
implies $|c-m|\leq r-{|a-b|\over 2}+\epsilon+\delta\leq\epsilon+ 2\delta$. 
\end{proof}
In particular, any two $\epsilon$-centers of a finite set are $(4\delta+2\epsilon)$-close to each other and---by Lemma \ref{diamvrad}---the midpoints of any two diameters are $3\delta$-close to each other.

\subsubsection*{An equivariant choice of centers}
\begin{corollary}
\label{equivcenter}
Suppose $G$ acts on a $\delta$-hyperbolic space $\mathcal H$ with displacement greater than $3\delta$. Then $G$ acts freely on the collection of all finite subsets of $\mathcal H$. 
\end{corollary}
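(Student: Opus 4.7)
The plan is to argue by contradiction: if some non-trivial $g \in G$ fixes a finite set $\mathcal{X} \subset \mathcal{H}$ setwise, then $g$ must move points of $\mathcal{H}$ by at most $3\delta$, contradicting the displacement hypothesis. The key point is that the midpoint of a diameter-realizing segment gives an almost canonical, almost-equivariantly-defined point associated to $\mathcal{X}$.

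More precisely, suppose $g \in G \setminus \{1\}$ and $g \cdot \mathcal{X} = \mathcal{X}$. Pick a diameter-realizing segment $[a,b]$ of $\mathcal{X}$ (so $a,b \in \mathcal{X}$ and $|a-b| = \mathrm{diam}(\mathcal{X})$) and let $m$ be its midpoint. Since $g$ is an isometry and permutes $\mathcal{X}$, the points $g\cdot a, g\cdot b$ again lie in $\mathcal{X}$ and satisfy $|g\cdot a - g\cdot b| = |a-b| = \mathrm{diam}(\mathcal{X})$. Therefore $[g\cdot a, g\cdot b]$ is another diameter-realizing segment of $\mathcal{X}$, and its midpoint is precisely $g \cdot m$.

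Now I invoke the observation immediately following Lemma \ref{centervsmidpoint}: the midpoints of any two diameter-realizing segments of a finite set are within $3\delta$ of one another. (This follows from Lemma \ref{centervsmidpoint} by taking $\epsilon$-centers with $\epsilon \to 0$, or more directly by applying the triangle estimate twice to a common $\delta$-center provided by Lemma \ref{diamvrad}.) Applied to $m$ and $g \cdot m$, this gives
$$
|g \cdot m - m| \leq 3\delta.
$$
However, the displacement hypothesis says $|g \cdot p - p| > 3\delta$ for every $p \in \mathcal{H}$ and every non-trivial $g$. Taking $p = m$ yields a contradiction, so $g = 1$.

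There is essentially no obstacle here; the whole content is already encoded in the two preceding lemmas, and this corollary is really just the observation that the almost-canonical midpoint of a diameter provides an approximate fixed point for any element stabilizing $\mathcal{X}$. The only very minor point to check is that the $3\delta$ bound between midpoints of two diameters is indeed realized (not merely $3\delta + 2\epsilon$ in the limit), which is immediate since for an actual diameter-realizing segment the midpoint is a $\delta$-center by Lemma \ref{diamvrad}, and applying Lemma \ref{centervsmidpoint} with $\epsilon = \delta$ to one midpoint relative to another diameter gives the $3\delta$ estimate.
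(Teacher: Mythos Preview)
Your proof is correct and follows essentially the same approach as the paper: both argue that if $g\mathcal{X}=\mathcal{X}$ then the midpoints $m$ and $gm$ of two diameter-realizing segments of $\mathcal{X}$ are $3\delta$-close (via Lemmas \ref{diamvrad} and \ref{centervsmidpoint}), contradicting the displacement hypothesis. The paper's version is just more terse, phrasing it as ``$m$ and $gm$ are $\delta$-centers and hence $3\delta$-close.''
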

\begin{proof}
Let $\mathcal X$ be a finite subset and $m$ the midpoint of a diameter of $\mathcal X$. If $g\mathcal X=\mathcal X$ for some $g\in G$ then $m$ and $gm$ are $\delta$-centers by Lemma \ref{diamvrad} and hence $3\delta$-close by Lemma \ref{centervsmidpoint}. Therefore, $g=1$. 
\end{proof}
Therefore, as long as the displacement is greater than $3\delta$ we can choose for each finite subset $\mathcal X\subset \mathcal H$ an $\epsilon$-center $c(\mathcal X)$ such that $c(g\mathcal X)=gc(\mathcal X)$ for each $g\in G$.

\subsubsection*{Distance from origin to center}
Now, we estimate the distance from the origin to an $\epsilon$-center of a set $\mathcal X$ in terms of its radius and $|\mathcal X|$.
\begin{lemma}
\label{distancetocenter}
Let $\mathcal X$ be a finite set with $\epsilon$-center $c$ and radius $r$. Then
$$
|\mathcal X|-r-\epsilon\leq|c|\leq |\mathcal X|-r+4\delta+\epsilon
$$
\end{lemma}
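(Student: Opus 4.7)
The plan is to split into the two inequalities and handle each using the preceding lemmas of the section.

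For the left inequality, I would just recall that Lemma \ref{fellowtravel1} (or really the triangle inequality it relies on) gives $|c| \geq |p| - r - \epsilon$ for every $p \in \mathcal X$, since $c$ lies within $r + \epsilon$ of every point of $\mathcal X$. Taking the maximum over $p \in \mathcal X$ turns $|p|$ into $|\mathcal X|$ and yields the left bound. This is immediate and requires no hyperbolicity.

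For the right inequality I would reduce from $c$ to the midpoint $m$ of a diameter-realizing segment $[a,b] \subset \mathcal X$, since Lemma \ref{centervsmidpoint} already tells us $|c - m| \leq \epsilon + 2\delta$. So it suffices to prove the sharper estimate $|m| \leq |\mathcal X| - r + 2\delta$. To do this I would consider the projection $o'$ of the origin $o$ to $[a,b]$; after relabeling $a \leftrightarrow b$ if necessary, I may assume $o'$ lies in $[m,b]$, so that $m$ lies in the initial subsegment $[a,o']$. The $\delta$-converse to the triangle inequality recalled just before Lemma \ref{diamvrad} then gives
$$
|o-m| + |m-a| \leq |o-a| + \delta.
$$
Since $|o - a| \leq |\mathcal X|$ and $|m - a| = |a-b|/2$, this rearranges to $|m| \leq |\mathcal X| - |a-b|/2 + \delta$. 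Now Lemma \ref{diamvrad} supplies $|a-b|/2 \geq r - \delta$, so $|m| \leq |\mathcal X| - r + 2\delta$ as desired.

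Combining these, the triangle inequality $|c| \leq |m| + |c-m|$ gives $|c| \leq (|\mathcal X| - r + 2\delta) + (\epsilon + 2\delta) = |\mathcal X| - r + 4\delta + \epsilon$. There is no real obstacle here; the only thing to keep straight is the WLOG swap so that the thin-triangle inequality is applied on the correct half of $[a,b]$ (the half containing $m$, after one possibly relabels $a$ and $b$ so that $o$ projects to the other half). Everything else is bookkeeping with constants already packaged in Lemmas \ref{diamvrad} and \ref{centervsmidpoint}.
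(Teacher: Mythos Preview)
Your argument is correct and follows essentially the same route as the paper: both proofs use the midpoint $m$ of a diameter, the WLOG that the projection of $o$ lands in $[m,b]$, the thin-triangle inequality to bound $|m|$, Lemma~\ref{diamvrad} to convert $|a-b|/2$ to $r$, and Lemma~\ref{centervsmidpoint} to pass from $m$ to $c$. The only difference is cosmetic ordering of the inequalities.
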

\begin{proof}
The left inequality already appeared in Lemma \ref{fellowtravel1}. For the right one, we again pick a diameter realizing segment $[a,b]$ and let $m$ be its midpoint. 
Without loss of generality, the projection of $o$ to $[a,b]$ is on $[m,b]$ and we have 
\begin{eqnarray*}
|c|&\leq&|m|+|m-c|\\
&\leq&(|a|-|a-m|+\delta)+2\delta+\epsilon\\
&\leq& |\mathcal X|-{|a-b|\over 2}+3\delta+\epsilon,
\end{eqnarray*}
where the first inequality is the triangle inequality, the second is inequality (\ref{conversetriangle}) for $x=o$ and $y_1=m$ (it reads $|o-m|+|m-a|\leq|o-a|+\delta$) together with Lemma \ref{centervsmidpoint}, and the third follows from the fact that $a\in\mathcal X$ and $|a-m|=|a-b|/2$ since $m$ is the midpoint.
Putting this together with Lemma \ref{diamvrad} gives
$$
|c|+r\leq|c|+\left({|a-b|\over 2}+\delta\right)\leq |\mathcal X|+4\delta+\epsilon
$$
which proves the desired inequality. 
\end{proof}

\subsubsection*{Diameter of intersection of two balls}
Another property of $\delta$-hyperbolic spaces we will need is a bound on the diameter of the intersection of two balls. The diameter of a set $\mathcal X$ (not necessarily finite) is $\inf_{a,b\in\mathcal X}|a-b|$. 
\begin{lemma}
\label{twoballs}
In a $\delta$-hyperbolic space $\mathcal H$, 
the diameter of the intersection of two closed balls $B(c_1,r_1)\cap B(c_2,r_2)$ is bounded above by 
$
r_1+r_2-|c_1-c_2|+2\delta. 
$
\end{lemma}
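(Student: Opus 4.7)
The plan is to invoke the four-point formulation of $\delta$-hyperbolicity, which in this setting reduces the lemma to a one-line check. Recall (\cite{gromovhyperbolic,cdp,bhbook}) that Gromov's condition is equivalent to the inequality
$$
|w-x|+|y-z|\leq\max(|w-y|+|x-z|,\;|w-z|+|x-y|)+2\delta
$$
for any four points $w,x,y,z\in\mathcal H$.

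Let $p,q\in B(c_1,r_1)\cap B(c_2,r_2)$. I would apply the four-point condition to the quadruple $(w,x,y,z)=(p,q,c_1,c_2)$ to obtain
$$
|p-q|+|c_1-c_2|\leq\max\bigl(|p-c_1|+|q-c_2|,\;|p-c_2|+|q-c_1|\bigr)+2\delta.
$$
Since $p,q\in B(c_1,r_1)\cap B(c_2,r_2)$, each of the two terms on the right is bounded by $r_1+r_2$, so
$$
|p-q|\leq r_1+r_2-|c_1-c_2|+2\delta.
$$
Taking the sup over $p,q$ in the intersection gives the claimed bound on the diameter.

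There is no real obstacle here: once the correct avatar of hyperbolicity is chosen, the proof is essentially one line. If one prefers a projection-based argument instead, the alternative is to pick a geodesic $[c_1,c_2]$, project $p$ and $q$ to it, and use the $\delta$-thin triangle inequality to control $|p-q|$ in terms of the distances $|p-c_i|,|q-c_i|$; this yields the same estimate but is less clean. I would stick with the four-point condition.
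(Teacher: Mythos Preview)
Your proof is correct. The four-point inequality you invoke is exactly the symmetric reformulation of the Gromov product condition $\langle x,z\rangle_w\geq\min(\langle x,y\rangle_w,\langle y,z\rangle_w)-\delta$, which the paper states and uses elsewhere (Lemma~\ref{gromovproductinequality} with $k=1$), so it is available in the paper's framework with the stated constant $2\delta$.

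The paper takes precisely the projection-based route you describe as the alternative: it picks a diameter-realizing pair $a,b$ in the intersection, projects both onto a geodesic $[c_1,c_2]$, and applies the $\delta$-thin triangle inequality twice (once with vertex $c_1$, once with $c_2$) before adding and invoking the ordinary triangle inequality. Your argument is genuinely shorter---one application of the four-point condition replaces two projection estimates and a summation---and avoids the case analysis implicit in ordering the projected points $a',b'$ along $[c_1,c_2]$. The paper's version has the minor advantage of staying entirely within the thin-triangle/projection language it has already set up, so no appeal to an equivalent formulation of hyperbolicity is needed; but your approach is the cleaner one and yields the identical constant.
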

\begin{proof}
Let $D$ be the diameter of the intersection $B(c_1,r_1)\cap B(c_2,r_2)$. For $\alpha>0$, pick points $a$ and $b$ in this intersection such that $|a-b|\geq D-\alpha$. Let $a',b'$ be the two projections of $a, b$ on $[c_1, c_2]$. We assume that $a'$ is on the left of $b'$ on this segment. Using $a'\in[c_1,b']$, hyperbolicity and the fact that $b \in B (c_1, r_1)$
\begin{eqnarray*}
| b - b' | + |b'-a'|+|a'-c_1|&=&|b-b'|+|b'-c_1|\\
&\leqslant&\delta+| b - c_1| \\
&\leqslant&\delta+r_1.
\end{eqnarray*}
Similarly, $b'\in[a',c_2]$, hyperbolicity and the fact that $a\in B(c_2,r_2)$ implies
\begin{eqnarray*}
|a-a'|+|a'-b'|+|b'-c_2|&=&| a - a' | + | a' - c_2 | \\
&\leqslant& \delta + | a - c_2|\\
&\leqslant& \delta+r_2.
\end{eqnarray*}
Adding these two inequalities together and using the triangle inequality on the left
gives
$$
|a-b|+|c_1-c_2|\leq 2\delta+r_1+r_2. 
$$
Since $|a-b|\geq D-\alpha$ and the argument applies to every $\alpha>0$, this finishes the proof. 
\end{proof}
\subsubsection*{Gromov product inequality} Finally, we recall a key inequality that will be used to estimate distances: 
\begin{lemma}[\cite{gromovhyperbolic} \&6 page 155 or \cite{cdp} 8.2 page 91]
\label{gromovproductinequality}
For a sequence of $2^k+1$ points $x_0,\dots,x_{2^k}$, we have 
$$
\left<x_0,x_{2^k}\right>\geq\min_i\left<x_i,x_{i+1}\right>-k\delta. 
$$
\end{lemma}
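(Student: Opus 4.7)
The plan is to prove this by induction on $k$, with the base case being the standard four-point inequality for Gromov products in a $\delta$-hyperbolic space.

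For the base case $k=1$, the claim reduces to the three-point inequality
$$
\left<x_0,x_2\right>\geq\min\bigl(\left<x_0,x_1\right>,\left<x_1,x_2\right>\bigr)-\delta,
$$
which is one of the equivalent formulations of $\delta$-hyperbolicity (the four-point condition with the basepoint $o$ playing the role of the fourth point). I would just quote this from \cite{cdp} or \cite{bhbook}, since it is one of the textbook characterizations of hyperbolicity and appears in the very references already cited at the start of Section \ref{hypsection}.

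For the inductive step, assume the inequality holds for every sequence of $2^{k-1}+1$ points. Given $x_0,\dots,x_{2^k}$, set $m:=2^{k-1}$ and apply the inductive hypothesis to the two halves $x_0,\dots,x_m$ and $x_m,\dots,x_{2^k}$ to obtain
$$
\left<x_0,x_m\right>\ \geq\ \min_{0\leq i<m}\left<x_i,x_{i+1}\right>-(k-1)\delta,
$$
$$
\left<x_m,x_{2^k}\right>\ \geq\ \min_{m\leq i<2^k}\left<x_i,x_{i+1}\right>-(k-1)\delta.
$$
Then apply the base case to the triple $(x_0,x_m,x_{2^k})$:
$$
\left<x_0,x_{2^k}\right>\ \geq\ \min\bigl(\left<x_0,x_m\right>,\left<x_m,x_{2^k}\right>\bigr)-\delta.
$$
Combining the three inequalities gives the bound with $k\delta$, completing the induction. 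The cleanness of the argument relies on the fact that $2^k+1$ splits evenly at the middle index, which is exactly why the logarithmic loss $k\delta$ appears instead of the linear loss $(n-1)\delta$ one would get from a naive term-by-term application.

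There is no real obstacle here: everything is formal once the base case is taken as a known characterization of hyperbolicity. If for some reason one wanted to avoid invoking the four-point definition directly, one could alternatively derive the base case by hand from the $\delta$-thin triangles condition by considering the triangle $[o,x_0,x_2]$ together with the point $x_1$ and splitting into cases depending on which side of the triangle the projection of $x_1$ to the opposite side lands on; but this reproduces the standard argument and is not needed given the references.
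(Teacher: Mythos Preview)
Your argument is correct: the dyadic induction on $k$, using the four-point inequality $\left<x,z\right>\geq\min(\left<x,y\right>,\left<y,z\right>)-\delta$ as the base case and splitting the sequence at the midpoint $m=2^{k-1}$, is exactly the standard proof and yields the stated bound with loss $k\delta$.

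There is nothing to compare against: the paper does not give its own proof of this lemma but simply cites it from \cite{gromovhyperbolic} and \cite{cdp}. Your write-up supplies the (short) argument the paper leaves to the references.
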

\begin{remark}
The same inequality holds for any sequence of fewer than $2^k+1$ points, as we can extend it to a sequence of $2^k+1$ points by repeating the point in the sequence which has maximal distance to $o$. 
\end{remark}

\section{Extremal graphs\label{extremalsection}}

Let $V$ be a finite set and suppose we have a family\footnote{Repetitions in $Y=(\mathcal Y_v)_{v\in V}$ are allowed, i.e. we may have $\mathcal Y_v=\mathcal Y_w$ even if $v\not=w$.} $Y=(\mathcal Y_v)_{v\in V}$ of bounded subsets of $\mathcal H$. 
For subsection \ref{extremalgraphsubsection}, we fix $\epsilon$ arbitrarily small (say $\epsilon<<\delta$) and chose for every set $\mathcal Y_v$ an $\epsilon$-center $c_v$.
(If $\mathcal H$ is proper or complete CAT(0), we set $\epsilon=0$ and let $c_v$ be a center of $\mathcal Y_v$.) Set $d=|\cup_{v\in V}\mathcal Y_v|=\max_{v\in V}|\mathcal Y_v|$.
Let $\mu$ be a fixed parameter. A point $p\in\cup_{v\in V}\mathcal Y_v$ satisfying $|p|\geq |\cup_{v\in V}\mathcal Y_v|-\mu$ is called a {\it $\mu$-extremal point} (of $Y$). A $0$-extremal point will also be called an {\it extremal point}. 

\subsection{The graph $\Gamma_{\mu}(Y)$\label{extremalgraphsubsection}}
Let us first define the {\it $\mu$-extremal graph $\Gamma_{\mu}(Y)$} of the family $Y$. The vertices of $\Gamma_{\mu}(Y)$ are the indices $v\in V$ such that $\mathcal Y_v$ contains a $\mu$-extremal point (of $Y$), and there is an edge between $v$ and $w$ for each $\mu$-extremal point in $\mathcal Y_v\cap\mathcal Y_w$.



\subsubsection*{Distance between centers} 
For each vertex $v\in V$, let $r_v$ be radius of the set $\mathcal Y_v$. 
We estimate the distance between two centers $c_v$ and $c_w$ in terms of the distance between the vertices $v$ and $w$ in the graph $\Gamma_{\mu}(Y)$. 
By a {\it (simplicial) path} between $v$ and $w$, we mean a sequence $v=v_0,v_1,\dots,v_m=w$ of vertices, such that for each $i$, $v_i$ and $v_{i+1}$ are connected by an edge $e_i$ in the graph. 
\begin{lemma}
\label{centerdistance}
For a subset $W\subset V$ denote $r_{\max(W)}=\max_{v\in W}r_v$. 
\begin{enumerate}
\item[1.]
If $v$ and $w$ are connected by a path $P$ of length $m$ in $\Gamma_\mu(Y)$, then 
$$
|c_v-c_w|\leqslant (r_{\max(P)}-r_v)+(r_{\max(P)}-r_w)+2\mu+(8+2\lceil\log_2(2m)\rceil)\delta+4\epsilon.
$$
\item[2.]
If $v$ and $w$ are adjacent vertices in $\Gamma_{\mu}(Y)$, then we have
$$
|c_v-c_w|\leq|r_v-r_w|+2\mu+10\delta+4\epsilon.
$$
\end{enumerate}
\end{lemma}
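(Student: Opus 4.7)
My plan is to establish part 1 directly and recover part 2 as the specialization $m=1$. The key idea is to interpolate between the centers $c_v$ and $c_w$ by threading through the $\mu$-extremal points sitting on each edge of the path $P$, then transfer pointwise Gromov-product estimates into a bound on $\langle c_v,c_w\rangle$ via the iterated Gromov inequality (Lemma \ref{gromovproductinequality}).

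Set $M:=|\bigcup_{v\in V}\mathcal Y_v|$. By definition of $\Gamma_{\mu}(Y)$, every vertex $v$ in it has some $\mu$-extremal point inside $\mathcal Y_v$, and consequently $M-\mu\le|\mathcal Y_v|\le M$. Write the path as $v=v_0,v_1,\dots,v_m=w$ and pick for each edge $\{v_i,v_{i+1}\}$ a $\mu$-extremal point $p_i\in\mathcal Y_{v_i}\cap\mathcal Y_{v_{i+1}}$ realizing that edge. This produces the interlaced sequence of $2m+1$ points
$$
c_{v_0},\; p_0,\; c_{v_1},\; p_1,\;\dots,\;p_{m-1},\; c_{v_m}.
$$

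For any consecutive pair $(p_i,c_{v_j})$ with $j\in\{i,i+1\}$, Lemma \ref{fellowtravel1} applied to $\mathcal X=\mathcal Y_{v_j}$ gives
$$
\langle p_i,c_{v_j}\rangle \;\ge\; \frac{|\mathcal Y_{v_j}|+|p_i|}{2}-r_{v_j}-\epsilon \;\ge\; (M-\mu)-r_{\max(P)}-\epsilon,
$$
since both $|\mathcal Y_{v_j}|$ and $|p_i|$ are at least $M-\mu$. I would then invoke Lemma \ref{gromovproductinequality} with $k=\lceil\log_2(2m)\rceil$ on the interlaced sequence, padding by repetitions of $c_{v_m}$ if $2m<2^k$; the padded consecutive Gromov products equal $|c_{v_m}|$, which by Lemma \ref{distancetocenter} is itself at least $(M-\mu)-r_{\max(P)}-\epsilon$, so the minimum is unaffected. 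The result is
$$
\langle c_v,c_w\rangle \;\ge\; (M-\mu)-r_{\max(P)}-\epsilon-\lceil\log_2(2m)\rceil\delta.
$$

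To conclude part 1, combine this with the upper bounds $|c_v|\le M-r_v+4\delta+\epsilon$ and $|c_w|\le M-r_w+4\delta+\epsilon$ (from Lemma \ref{distancetocenter}, using $|\mathcal Y_v|\le M$) through the identity $|c_v-c_w|=|c_v|+|c_w|-2\langle c_v,c_w\rangle$. The $M$'s cancel and the constants regroup into exactly the claimed bound, the $4\epsilon$ coming from $\epsilon+\epsilon+2\epsilon$ across the three ingredients, and the $8\delta$ from the two applications of Lemma \ref{distancetocenter}. For part 2, set $m=1$: then $r_{\max(P)}=\max(r_v,r_w)$, so $(r_{\max(P)}-r_v)+(r_{\max(P)}-r_w)=|r_v-r_w|$, and $\lceil\log_2 2\rceil=1$ converts $(8+2\lceil\log_2(2m)\rceil)\delta$ into $10\delta$. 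The only nonroutine step is recognizing that the right bookkeeping object is the interlaced centers-and-extremal-points sequence; once that is set up, everything else is a matter of chaining previously established lemmas and tracking constants.
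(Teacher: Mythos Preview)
Your proof is correct and follows essentially the same approach as the paper: both build the interlaced sequence of centers and $\mu$-extremal points, bound each consecutive Gromov product via Lemma~\ref{fellowtravel1}, apply Lemma~\ref{gromovproductinequality}, and then finish with Lemma~\ref{distancetocenter} through the identity $|c_v-c_w|=|c_v|+|c_w|-2\langle c_v,c_w\rangle$. You are in fact slightly more explicit than the paper about the padding needed to reach $2^k+1$ points and why the padded Gromov products $\langle c_{v_m},c_{v_m}\rangle=|c_{v_m}|$ do not lower the minimum.
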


\begin{proof}
The path from $v$ to $w$ defines an alternating sequence of vertices and edges $v=v_0,e_0,v_1,e_1,\dots,e_{m-1},v_m=w$ in the graph, which gives an alternating sequence of centers and $\mu$-extremal points $c_{v_0},p_{e_0},\dots,p_{e_{m-1}},c_{v_m}$.
Lemma \ref{fellowtravel1} implies for $v=v_i$ and $p\in\{p_{e_{i-1}},p_{e_i}\}$ that 
\begin{eqnarray*}
\left<c_{v_i},p\right>&\geq&{|\mathcal Y_{v_i}|+|p|\over 2}-r_{v_i}-\epsilon,\\
&\geq& d-\mu-r_{v_i}-\epsilon
\end{eqnarray*}
since $|\mathcal Y_{v_i}|\geq d-\mu$ and $|p|\geq d-\mu$. 
The sequence $c_{v_0},p_{e_0},\dots,c_{v_m}$ consists of $2m$ points and $2m\leq 2^{\lceil\log_2(2m)\rceil}+1$, so the remark after Lemma \ref{gromovproductinequality} implies 
\begin{eqnarray*}
\left<c_v,c_w\right>&\geq&\min_{v_i} (d-\mu-r_{v_i}-\epsilon)-\lceil\log_2(2m)\rceil\delta.
\end{eqnarray*}
Using this and the inequality $|c_v|\leq d-r_v+4\delta+\epsilon$ obtained in Lemma \ref{distancetocenter} we get
$$
|c_v|-\left<c_v,c_w\right>\leq (\max_{v_i}r_{v_i})-r_v+\mu+(4+\lceil\log_2(2m)\rceil)\delta+2\epsilon,
$$
and a similar inequality for $w$ in place of $v$. Since
$$
|c_v-c_w|=(|c_v|-\left<c_v,c_w\right>)+(|c_w|-\left<c_v,c_w\right>),
$$
we obtain the first inequality. For adjacent vertices $v$ and $w$, we have $m=1$ and $2\max\{r_v,r_w\}-r_v-r_w=|r_v-r_w|$, which gives the second inequality. 
\end{proof}

\subsubsection*{Colors} 
Consider a partition $V=V_1\sqcup\dots\sqcup V_n$ into $n$ subsets called {\it colors} such that any two vertices of the same color have the same radius (if $v,w\in V_i$ then $r_v=r_w$).  Denote by $r_i$ the radius of a set of color $i$. 

\subsubsection*{Bounding the diameter of $\Gamma_{\mu}(Y)$}
Next, we will give an upper bound on the diameter of a component of an extremal graph in terms of the number $n$ of colors. The result will be proved by induction on the number of colors $n$. In fact, we prove something stronger, namely an upper bound on the length of an embedded path in the graph. 
\begin{proposition}
\label{pathbound}
Let $V=V_1\sqcup\dots\sqcup V_n$ be a finite set partitioned into $n$ colors and let $Y=(\mathcal Y_v)_{v\in V}$ be a collection of bounded subsets of $\mathcal H$ such that for each $i$ and any two different $v,w\in V_i$ we have
\begin{itemize}
\item
$r_v=r_w$ and 
\item
$|c_v-c_w|>2\mu+(10+2n)\delta+4\epsilon$. 
\end{itemize}
Then every embedded path in $\Gamma_{\mu}(Y)$ has length at most $2^n-2$. 
\end{proposition}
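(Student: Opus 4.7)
My plan is to induct on the number $k \leq n$ of distinct colors appearing among the vertices of the embedded path $P$, proving the sharper bound $|P| \leq 2^k - 2$. Inducting on $k$ (rather than on the total number $n$ of colors in the partition) has the advantage that we stay inside a fixed graph $\Gamma_\mu(Y)$ throughout the argument, with the standing hypothesis $|c_v - c_w| > 2\mu + (10+2n)\delta + 4\epsilon$ unchanged; there is no need to pass to a restricted sub-family whose $\mu$-extremal graph may differ from the induced subgraph.

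The base case $k = 1$ is immediate: two adjacent vertices of the same color would satisfy $|c_v - c_w| \leq 2\mu + 10\delta + 4\epsilon$ by Lemma \ref{centerdistance} part 2, contradicting the hypothesis. For the inductive step, choose a vertex $v \in P$ of maximum radius $r_{\max(P)}$. Since vertices of the same color have equal radii, every vertex of color $c(v)$ on $P$ also achieves $r_{\max(P)}$. I then split into two cases according to the multiplicity of $c(v)$ on $P$.

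If color $c(v)$ appears on $P$ only at $v$, then deleting $v$ produces one or two embedded subpaths, each using $\leq k - 1$ colors. The induction hypothesis bounds each subpath by $2^{k-1} - 2$, so after adding in the (at most) two edges incident to $v$ we obtain $|P| \leq 2(2^{k-1} - 2) + 2 = 2^k - 2$, as desired.

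The main obstacle is the remaining case, in which color $c(v)$ appears at least twice on $P$. The key move is a \emph{closest-pair} choice: select two vertices $v, w$ of color $c(v)$ that are closest along $P$, so that the open subpath $P'_{\mathrm{int}}$ strictly between them uses $\leq k - 1$ colors. Let $m'$ denote the length of the closed subpath $P'$ from $v$ to $w$. Because $v$ realizes the maximum radius on all of $P$, we have $r_{\max(P')} = r_v = r_w$, and part 1 of Lemma \ref{centerdistance} reduces to
$$|c_v - c_w| \leq 2\mu + \bigl(8 + 2\lceil \log_2(2m')\rceil\bigr)\delta + 4\epsilon.$$
Combining with the hypothesis forces $\lceil \log_2(2m')\rceil > n + 1$, hence $m' > 2^n$; in particular $m' \geq 2$, so $P'_{\mathrm{int}}$ is non-empty. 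But the induction hypothesis applied to $P'_{\mathrm{int}}$ gives $m' - 2 \leq 2^{k-1} - 2$, i.e.\ $m' \leq 2^{k-1} \leq 2^{n-1}$, contradicting $m' > 2^n$. So this case cannot arise, completing the induction.
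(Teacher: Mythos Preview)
Your proof is correct, and it takes a genuinely different route from the paper's.

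Both arguments ultimately rest on the same mechanism: applying Lemma~\ref{centerdistance}.1 to two same-color vertices $v,w$ connected by a subpath on which $r_{\max}=r_v=r_w$, so that the $(r_{\max}-r_v)+(r_{\max}-r_w)$ terms vanish and the resulting bound clashes with the displacement hypothesis. The difference lies in how this configuration is produced. The paper argues by contradiction from a path of length $2^n-1$, orders the colors by radius, and \emph{strips off the smallest-radius color} at each stage, maintaining by reverse induction that the surviving subsequence $P_k$ has $\geq 2^k$ vertices with no two consecutive ones of the same color; the key step is that if two consecutive survivors in $P_k$ share a color, the portion of $P_n$ between them passes only through smaller-radius colors, so Lemma~\ref{centerdistance}.1 applies. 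You instead induct directly on the number $k$ of colors present on the path, \emph{single out the largest-radius color}, and split on its multiplicity: if it appears once you delete it and recurse on the two sides, and if it appears twice a closest pair of that color bounds a subpath which is simultaneously long (by Lemma~\ref{centerdistance}.1 and the hypothesis) and short (by the inductive bound on its $(\leq k-1)$-colored interior). Your argument is slightly more elementary---a straightforward forward induction with a case split, and it immediately gives the refined bound $2^k-2$ in terms of the colors actually used---while the paper's halving argument is more uniform and avoids the case distinction.
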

\begin{proof}
We argue by contradiction. If there is an embedded path in $\Gamma_{\mu}(Y)$ of length more than $2^n-2$, then there is an embedded path of length precisely $2^n-1$. Let $P_n$ be such a path. It has $2^n$ vertices. Reorder the list of colors so that
$$
r_1\geq\dots\geq r_n. 
$$
Let $P_k$ be the sequence of vertices obtained by throwing out from $P_n$ all vertices of colors $\{k+1,\dots,n\}$. The sequence $P_k$ is colored by the set $\{1,\dots,k\}$. We prove by reverse induction (starting with base case $k=n$ and going down to $k=1$) that 
\begin{itemize}
\item
$P_k$ has at least $2^k$ vertices, and 
\item
any two consecutive vertices in $P_k$ have different colors.
\end{itemize}
Note that consecutive vertices of $P_n$ have different colors by Lemma \ref{centerdistance}.2 and the second hypothesis. So $P_n$ satisfies the two properties above, verifying the base case. 

To prove the induction step, suppose we know the statement for $P_{k+1}$. Since it has at least $2^{k+1}$ vertices an no two consecutive vertices have the same color, at most $\lceil|P_{k+1}|/2\rceil$ have the color $r_{k+1}$, so that $P_k$ has at least $2^k$ vertices. Suppose two consecutive vertices $v,w$ in the sequence $P_k$ have the same color. Then $r_v=r_w$ and the two vertices are connected by a path (of length at most $2^n$) in $P_n$ through colors $\{k+1,\dots,n\}$ in which all vertices have radii $\leq r_v=r_w$ by our choice of ordering. Therefore, Lemma \ref{centerdistance}.1 implies: 
$$
|c_v-c_w|\leq 2\mu+(8+2(n+1))\delta+4\epsilon.
$$
This contradicts the second hypothesis of our proposition. So, we have shown that $v$ and $w$ have different colors, completing the induction step. 

We have shown $P_1$ has at least two vertices and consecutive ones have different colors. This is absurd since all vertices of $P_1$ have the same color. So, we arrive at a contradiction to our initial assumption, and conclude that all embedded paths in $\Gamma_{\mu}(Y)$ have length $\leq 2^n-2$. 
\end{proof}
\begin{remark}
As a consequence of our argument, we see that the set $P_1$ has at most one element, as it is colored by a unique color, and an embedded path in $\Gamma_{\mu}$ contains at most one element with the largest color.
\end{remark}
Recall that $r_1\geq\dots\geq r_n$ so that $r_{\max(V)}=r_1$.
\begin{corollary}[Uniqueness]
\label{uniqueness}
In the situation of Prop. \ref{pathbound}, if for any pair of distinct vertices of the same color $v,w\in V_i$ we have 
$$
|c_v-c_w|>2(r_{1}-r_i)+2\mu+(10+2n)\delta+4\epsilon
$$
then there is at most one vertex of color $i$ in each component of $\Gamma_{\mu}(Y)$. 
\end{corollary}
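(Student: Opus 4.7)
The plan is to argue by contradiction: suppose two distinct vertices $v, w$ of color $i$ lie in the same component of $\Gamma_\mu(Y)$. Since the hypothesis of Corollary \ref{uniqueness} is strictly stronger than that of Proposition \ref{pathbound} (because $r_1 \geq r_i$), Proposition \ref{pathbound} applies and any embedded path joining $v$ to $w$ has length $m \leq 2^n-2$. In particular $2m \leq 2^{n+1} - 4 < 2^{n+1}$, so $\lceil \log_2(2m)\rceil \leq n+1$.

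Now fix such a path $P$ and apply Lemma \ref{centerdistance}.1. Since $v$ and $w$ both have color $i$ we have $r_v = r_w = r_i$, and since $r_1 = r_{\max(V)} \geq r_{\max(P)}$ we have $r_{\max(P)} - r_v \leq r_1 - r_i$ and similarly for $w$. Combining with the bound on $\lceil \log_2(2m)\rceil$ yields
$$
|c_v - c_w| \;\leq\; 2(r_1 - r_i) + 2\mu + \bigl(8 + 2(n+1)\bigr)\delta + 4\epsilon \;=\; 2(r_1 - r_i) + 2\mu + (10 + 2n)\delta + 4\epsilon.
$$
This directly contradicts the strict lower bound on $|c_v - c_w|$ in the hypothesis of the corollary, completing the proof.

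There is no real obstacle: the whole point of the strengthened hypothesis is that the extra term $2(r_1 - r_i)$ is exactly what Lemma \ref{centerdistance}.1 contributes when the path passes through vertices whose radii may exceed $r_i$, and the bound on $m$ from Proposition \ref{pathbound} is exactly what keeps $\lceil\log_2(2m)\rceil \leq n+1$ so that the $\delta$-term matches $(10+2n)\delta$.
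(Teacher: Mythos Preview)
Your proof is correct and follows essentially the same approach as the paper's: argue by contradiction, invoke Proposition \ref{pathbound} to bound the length of an embedded path between $v$ and $w$, then apply Lemma \ref{centerdistance}.1 to obtain an upper bound on $|c_v-c_w|$ that contradicts the hypothesis. You spell out a few details the paper leaves implicit (the bound $\lceil\log_2(2m)\rceil\leq n+1$ and the estimate $r_{\max(P)}\leq r_1$), but the argument is the same.
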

\begin{proof}
Assume $v$ and $w$ are in the same component of $\Gamma_{\mu}(Y)$. Then, by Proposition \ref{pathbound}, they are connected by an embedded path $P$ of length at most $2^{n}-2$. Therefore, Lemma \ref{centerdistance}.1 implies that $|c_v-c_w|\leq 2(r_{1}-r_i)+2\mu+(10+2n)\delta+4\epsilon$, contradicting the hypothesis. So, $v$ and $w$ must be in different components. 
\end{proof}

\subsection{The graph $\Gamma_{\mu}(\Xi)$} We will apply Proposition \ref{pathbound} and Corollary \ref{uniqueness} in the following situation. 
Let $\Xi=(\xi_v)_{v\in V}$ in $\mathbb K[G]$ be a finite family of group ring elements, such that no two are $\mathbb K$-scalar multiples of each other. 
Recall that the (geometric) support of $\xi_v=\sum_{g\in G}\xi_v^g\cdot g$ is the set $\mathcal X_v=\{g\cdot o\mid \xi_v^g\not=0\}$. 
Let $X=(\mathcal X_v)_{v\in V}$ be the family of these supports, and set $\Gamma_{\mu}(\Xi):=\Gamma_{\mu}(X)$. In this situation, there is a canonical partition of the index set $V$ into colors $V=V_1\sqcup\dots\sqcup V_n$ defined as follows. We declare that $v$ and $w$ have the same color if and only if there is $\lambda\in\mathbb K^*$ and $g\in G$ such that $\xi_v=\lambda g\xi_w$. Note that if $v$ and $w$ have the same color then $g\mathcal X_v=\mathcal X_w$ and hence $r_v=r_w$. Let $r_i$ be the radius of elements with color $i$.

\begin{corollary}[Uniform diameter bound+Uniqueness]
\label{diamcor}
Suppose that $G$ acts on a $\delta$-hyperbolic space $\mathcal H$ with displacement greater than $2\mu+(10+2n)\delta$. Let $\Xi=(\xi_v)_{v\in V}$ in $\mathbb K[G]$ be a family of group ring elements consisting of $n$ colors, and such that no two are $\mathbb K$-scalar multiples of each other. Then, 
\begin{itemize}
\item[1.] Each component of $\Gamma_{\mu}(\Xi)$ has diameter at most $2^n-2$.
\item[2.] If the minimal displacement is greater than $2(r_{1}-r_i)+2\mu+(10+2n)\delta$, then there is at most one vertex of color $i$ in each component of $\Gamma_{\mu}(\Xi)$.
\end{itemize}
\end{corollary}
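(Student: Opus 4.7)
The plan is to deduce Corollary \ref{diamcor} directly from Proposition \ref{pathbound} and Corollary \ref{uniqueness} applied to the family of supports $X=(\mathcal X_v)_{v\in V}$. The only thing to verify is that the displacement hypothesis on $G$ translates into the required center-separation hypotheses of those results.

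First I would observe that the hypothesis displacement $>2\mu+(10+2n)\delta$ is in particular $>3\delta$, so Corollary \ref{equivcenter} lets me pick, once and for all, an equivariant family of $\epsilon$-centers: a center $c_v$ of $\mathcal X_v$ for each $v\in V$ such that $c_{gv}=gc_v$ whenever $g\mathcal X_v=\mathcal X_w$. Here $\epsilon>0$ is a small auxiliary parameter I can choose as small as I like, since both displacement inequalities in the hypothesis are strict. The coloring is already set up so that same-color vertices have identical radii: if $v,w\in V_i$ then $\xi_v=\lambda g\xi_w$ for some $\lambda\in\mathbb K^*$, $g\in G$, hence $g\mathcal X_w=\mathcal X_v$ and $r_v=r_w$, as noted in the text.

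The main computation is the separation of centers of same-color vertices. If $v\neq w$ lie in $V_i$ with $\xi_v=\lambda g\xi_w$, then $g\neq 1$: otherwise $\xi_v=\lambda\xi_w$ would contradict the assumption that no two $\xi_v$'s are $\mathbb K$-scalar multiples of each other. By equivariance, $c_v=gc_w$, so
$$
|c_v-c_w|=|gc_w-c_w|\ \geq\ \min\text{-displacement}.
$$
Choosing $\epsilon$ small enough that the strict inequalities of the hypothesis become
$$
\min\text{-displacement}>2\mu+(10+2n)\delta+4\epsilon
$$
(respectively $>2(r_1-r_i)+2\mu+(10+2n)\delta+4\epsilon$) turns the $G$-action hypothesis into precisely the center-separation assumption of Proposition \ref{pathbound} (respectively Corollary \ref{uniqueness}).

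Once these hypotheses are in place, the two conclusions follow mechanically. Part 1: Proposition \ref{pathbound} says every embedded path in $\Gamma_\mu(\Xi)=\Gamma_\mu(X)$ has length at most $2^n-2$, which bounds the diameter of each connected component by $2^n-2$. Part 2: Corollary \ref{uniqueness} gives at most one vertex of color $i$ per component. The only ``obstacle'' here is the bookkeeping point about $\epsilon$, and the easy verification that $g\neq 1$ for distinct same-color vertices; both are essentially immediate, so the statement is really just an unpacking of the preceding two results in the group-algebra setting.
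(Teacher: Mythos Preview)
Your proposal is correct and follows essentially the same route as the paper: verify the center-separation hypothesis of Proposition~\ref{pathbound} (resp.\ Corollary~\ref{uniqueness}) by using the equivariant choice of $\epsilon$-centers from Corollary~\ref{equivcenter}, noting that distinct same-color vertices differ by a nontrivial group element, and absorbing the $4\epsilon$ slack into the strict displacement inequality. The paper's proof is the same argument, compressed into a few lines.
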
 
\begin{proof}
For part 1, pick $\epsilon$ small enough so that the minimal displacement is greater than $2\mu+(10+2n)\delta+4\epsilon$.
For two distinct elements $v,w\in V_i$, since $\xi_v$ is not a $\mathbb K$-scalar multiple of $\xi_w$ there is a non-trivial $g\in G$ such that $\lambda g\xi_v=\xi_w$. So, we have $r_v=r_w$. Since the displacement is greater than $3\delta$ we can---by Corollary \ref{equivcenter}---pick $\epsilon$-centers equivariantly, so that $gc_v=c_w$. Then $|c_v-c_w|=|c_v-gc_v|>2\mu+(10+2n)\delta+4\epsilon$. So, Proposition \ref{pathbound} applies. 

For part 2, pick $\epsilon$ small enough so that the minimal displacement is greater than $2(r_{1}-r_i)+2\mu+(10+2n)\delta+4\epsilon$, argue as in part 1, and apply Corollary \ref{uniqueness}.

\end{proof}

\subsubsection*{Components of $\Gamma_{\mu}(\Xi)$ and weak relations}
A finite collection of group ring elements $(\xi_v)_{v\in V}$ defines a {\it $\mu$-relation} if 
\begin{equation*}
\label{murelation}
\left|\sum_{v\in V}\xi_v\right|<\max_{v\in V}|\xi_v|-\mu.
\end{equation*}
The components of $\Gamma_{\mu}(\Xi)$ help us keep track of such $\mu$-relations.

\begin{lemma}
\label{relationcomponents}
If the family $\Xi=(\xi_v)_{v\in V}$ defines a $\mu$-relation, then every connected component $C$ of the graph $\Gamma_{\mu}(\Xi)$ that contains a vertex of $\Gamma_0(\Xi)$ defines its own $\mu$-relation $|\sum_{v\in C}\xi_v|<\max_{v\in C}|\xi_v|-\mu$. 
\end{lemma}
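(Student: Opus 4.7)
The plan is to show that the two sides of the claimed inequality are anchored to the same universal value $D := |\cup_{v\in V}\mathcal X_v|$: the maximum on the right equals $D$ because $C$ contains a $0$-extremal vertex, while the left side is strictly less than $D-\mu$ because the full $\mu$-relation already forces the coefficient of every ``very far'' group element to vanish, and any such coefficient in $\sum_{v\in C}\xi_v$ agrees with the one in $\sum_{v\in V}\xi_v$.

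First I would unpack the hypotheses. Let $v_0\in C$ be a vertex of $\Gamma_0(\Xi)$. By definition, $\mathcal X_{v_0}$ contains an extremal point of $X$, so $|\xi_{v_0}|=D$. Since $|\xi_v|\leq D$ for every $v$, this gives $\max_{v\in C}|\xi_v|=D=\max_{v\in V}|\xi_v|$. Hence the global $\mu$-relation takes the form $|\sum_{v\in V}\xi_v|<D-\mu$, which is the crucial quantitative input.

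Next, I would argue by contradiction: suppose there exists $g\in G$ with $|g\cdot o|\geq D-\mu$ whose coefficient in $\sum_{v\in C}\xi_v$ is non-zero. Then $g\cdot o$ lies in $\cup_{w\in V}\mathcal X_w$ (otherwise the coefficient would already vanish), so $g\cdot o$ is a $\mu$-extremal point of $X$. Let $S=\{w\in V : g\cdot o\in\mathcal X_w\}$. By the definition of $\Gamma_\mu(\Xi)$, the point $g\cdot o$ produces an edge between every pair of elements of $S$, so $S$ is contained in a single connected component of $\Gamma_\mu(\Xi)$. Because $S\cap C\neq\emptyset$ (the supposed witness lies there), we conclude $S\subset C$.

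Finally, the coefficient of $g$ in $\sum_{v\in C}\xi_v$ equals $\sum_{w\in S}\xi_w^g$ (only those $w$ with $g\cdot o\in\mathcal X_w$ contribute), and this equals the coefficient of $g$ in $\sum_{w\in V}\xi_w$ by the same reasoning for $V$. But $|g\cdot o|\geq D-\mu$ combined with $|\sum_{w\in V}\xi_w|<D-\mu$ forces that coefficient to be $0$, contradicting our choice of $g$. Thus every $g$ in the support of $\sum_{v\in C}\xi_v$ satisfies $|g\cdot o|<D-\mu$, giving $|\sum_{v\in C}\xi_v|<D-\mu=\max_{v\in C}|\xi_v|-\mu$. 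The only delicate point is the observation that a shared $\mu$-extremal point forces co-membership in one component, so that restriction to $C$ does not lose any contribution to the coefficient of $g$; everything else is bookkeeping.
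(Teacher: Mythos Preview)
Your proof is correct and follows essentially the same approach as the paper: both arguments hinge on the observation that all vertices whose support contains a given $\mu$-extremal point lie in a single component of $\Gamma_\mu(\Xi)$, so restricting the sum to $C$ does not alter the coefficient of any group element $g$ with $|g\cdot o|\geq D-\mu$. Your phrasing in terms of individual coefficients is slightly more explicit than the paper's, which instead splits the sum as $\sum_{v\in C}\xi_v+\sum_{v\in D}\xi_v$ with $D=\Gamma_\mu\setminus C$ and notes that these two pieces share no $\mu$-extremal points in their supports.
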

\begin{proof}
Set $d=\max_{v\in V}|\xi_v|$. Denote the complement of $C$ by $D=\Gamma_{\mu}-C$. Since $(\xi_v)_{v\in V}$ defines a $\mu$-relation, we have 
$$
\left|\sum _{v\in C}\xi_v+\sum_{v\in D}\xi_v\right|<d-\mu.
$$ 
Since $C$ is a component of $\Gamma_{\mu}(\Xi)$, the supports of $\sum_{v\in C}\xi_v$ and $\sum_{v\in D}\xi_v$ have no $\mu$-extremal points (that is, points $p$ with $|p|\geq d-\mu$) in common. Therefore, we must have
$$
\left|\sum _{v\in C}\xi_v\right|<d-\mu.
$$
Our choice of $C$ implies that $d=\max_{v\in C}|\xi_v|$, proving the lemma.  
\end{proof}

\section{Proof of Theorem \ref{intromaintheorem}\label{mainproof}}
We inductively define $\delta_n$ by $\delta_0=0$, $\delta_k=\delta_{k-1}+(2(4+\log_2(k+1))+7)\delta$. Then, \begin{eqnarray*}
\delta_n
&=&(15n+2\log_2((n+1)!))\delta.
\end{eqnarray*}
\begin{remark}
We will argue by induction on $n$. 
For most of the proof, all that will be relevant is that $(\delta_n)$ is an increasing sequence satisfying $\delta_n\geq(n+1)\delta$. For the very last inequality at the end of section \ref{mainproof}, we need to assume that $\delta_n-\delta_{n-1}\geq (2(4+\log_2(n+1))+7)\delta$. The above choice of $\delta_n$ is the smallest for which this holds.  
\end{remark}
\subsection{Restatement of the main theorem\label{restate}}


We will now restate Theorem \ref{intromaintheorem} in terms of colors and $\mu$-relations. We assume that $\xi_1,\dots,\xi_n$ are elements in the group algebra $\mathbb{K} [G]$ so
that there exist $\alpha_1, \ldots, \alpha_n$ in $\mathbb{K}
[G]$, not all zero, satisfying $| \sum \alpha_i \xi_i | < \max_i | \alpha_i \xi_i | -
\delta_n$. 

Recall that two elements $\xi_1$ and $\xi_2$ of $\mathbb{K} [G]$ have the
same color if there exists a trivial unit $\lambda g$ in
$\mathbb{K} [G]$ such that $\xi_1 - \lambda g \xi_2 = 0$. Certainly if two
of the $\xi_i$ have the same color, the conclusion of the theorem follows. So, from now on, we assume that the $\xi_i$ have different colors. Then, the index set $\{ 1, \ldots, n \}$ can be used as the set of colors. For each
$\alpha_i = \sum_{g \in G} \alpha^g_i \cdot g$, we let $A_i = \{g \in G \mid
\alpha^g_i \neq 0\}$ be its (algebraic) support in the group $G$. The group
ring elements $(\alpha^g_i g \xi_i)_{g \in A_i, 1 \leq i \leq n}$ are
distinct, no two of them are scalar multiples of each other, and for $i \neq
j$ they do not have the same color. We rename them $(\xi_v)_{v \in V}$ which
is a collection of elements in $\mathbb{K} [G]$ having $n$ colors. With this notation, as the $\xi_v$ are exactly the 
translates $\alpha_i^g\xi_i$, the sum $\sum_i\left(\sum_{g\in A_i}\alpha_i^g\right)\xi_i$ is now written as $\sum_{v\in V}\xi_v$ and the hypothesis $| \sum \alpha_i \xi_i | < \max_i | \alpha_i \xi_i |
- \delta_n$ implies
$$
\left| \sum_{v \in V} \xi_v \right| < \max_{v \in V} \left| \xi_v \right| - \delta_n.
$$
Recall that in this situation we say that the family $(\xi_v)_{v \in V}$ defines a
$\delta_n$-relation. Under this hypothesis, we will show that there exists a
vertex $v_*$ in $V$ and a subset $S \subset V$ made of elements of different
colors from the color of $v_*$ such that $\mathrm{diam} (\xi_{v_*} + \sum_{v
\in S} \xi_v) < \mathrm{diam} (\xi_{v_*}) - \delta$.


Summing up, Theorem \ref{intromaintheorem} will follow from

\begin{theorem} \label{maintheorem}
Suppose $G$ acts on a $\delta$-hyperbolic space $\mathcal H$ with minimal displacement $\rho>4\delta_n+(10+2n)\delta$. Let $\Xi=(\xi_v)_{v\in V}$ be a finite collection of elements in $\mathbb K[G]$ consisting of $n$ colors, and with no two elements being scalar multiples of each other. If $(\xi_v)_{v\in V}$ satisfies a $\delta_n$-relation,
then there is a subset $\{v_*\}\sqcup S$ of $V$ such that no element of $S$ has the same color as $v_*$ and
$$
\mathrm{diam}\left(\xi_{v_*}+\sum_{v\in S}\xi_v\right)< \mathrm{diam}(\xi_{v_*})-\delta. 
$$
\end{theorem}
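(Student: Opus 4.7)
I will localize the $\delta_n$-relation to a single connected component $C$ of $\Gamma_{\delta_n}(\Xi)$ by Lemma \ref{relationcomponents}, then inside $C$ pick a distinguished vertex $v_*$ and set $S=C\setminus\{v_*\}$. The combinatorial part of the conclusion (no $v\in S$ shares a color with $v_*$) will follow from the uniqueness of colors in components established in Corollary \ref{diamcor}, while the diameter estimate is the main geometric content.

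Concretely, I apply Lemma \ref{relationcomponents} with $\mu=\delta_n$: since $(\xi_v)_{v\in V}$ defines a $\delta_n$-relation, there is a connected component $C$ of $\Gamma_{\delta_n}(\Xi)$ containing a $0$-extremal vertex such that the restricted family $(\xi_v)_{v\in C}$ itself defines a $\delta_n$-relation $|\sum_{v\in C}\xi_v|<d-\delta_n$, where $d:=\max_{v\in C}|\xi_v|$. The displacement hypothesis $\rho>4\delta_n+(10+2n)\delta$ feeds into Corollary \ref{diamcor}: part (1) bounds the combinatorial diameter of $C$ by $2^n-2$, and part (2) guarantees that any color $i$ with $r_1-r_i<\delta_n$ occurs at most once in $C$. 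I choose $v_*\in C$ to be a vertex whose color satisfies this uniqueness condition---the natural candidate being the vertex of maximum radius $r_*:=r_{v_*}$ inside $C$ that simultaneously has $|\xi_{v_*}|=d$, with a more delicate choice if no single vertex enjoys both properties. Setting $S:=C\setminus\{v_*\}$ then produces a set whose elements all have a color different from that of $v_*$.

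The main estimate to prove is $\mathrm{diam}(\sum_{v\in C}\xi_v)<\mathrm{diam}(\xi_{v_*})-\delta$, to be compared with $\mathrm{diam}(\xi_{v_*})\geq 2r_*-2\delta$ from Lemma \ref{diamvrad}. The support of the sum lies in $B(o,d-\delta_n)\cap\bigcup_{v\in C}B(c_v,r_v+\epsilon)$: the first factor comes from the $\delta_n$-relation on absolute values, the second from $\mathcal{X}_v\subset B(c_v,r_v+\epsilon)$. Lemma \ref{centerdistance} (with combinatorial path-length $\leq 2^n-2$) places each $c_v$ within $(r_*-r_v)+2\delta_n+O(n\delta)$ of $c_{v_*}$; Lemma \ref{distancetocenter} places $c_{v_*}$ at distance $\geq d-r_*-\epsilon$ from the origin whenever $|\xi_{v_*}|=d$; and plugging these into Lemma \ref{twoballs} on the lens $B(c_{v_*},R)\cap B(o,d-\delta_n)$ (with $R=r_*+2\delta_n+O(n\delta)$) delivers the diameter estimate.

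The hard part will be getting the constants to close. The naive execution above yields a bound of order $2r_*+\delta_n+O(n\delta)$, in which the $2\delta_n$ enlargement of the enclosing radius $R$ precisely cancels the $\delta_n$ savings from the $\delta_n$-relation and produces an \emph{increase} over $\mathrm{diam}(\xi_{v_*})$. Closing this gap will require either (a) a sharper projection/Gromov-product analysis along the segment $[o,c_{v_*}]$ that avoids paying twice for the center-to-center distance $|c_v-c_{v_*}|$ when one estimates the position of a point lying simultaneously in $B(c_v,r_v+\epsilon)$ and $B(o,d-\delta_n)$, or (b) an inductive scheme over the $n$ colors in the spirit of Proposition \ref{pathbound}, eliminating one color at a time and accumulating an $O(n\delta)$ error per step; the quadratic shape of $\delta_n=(n^2+10n)\delta$ strongly suggests option (b). The final choice of $v_*$ may need refinement accordingly---most likely a vertex with $|\xi_{v_*}|=d$ whose center $|c_{v_*}|$ is as large as possible, so that absolute-value savings from $|\sum_{v\in C}\xi_v|<d-\delta_n$ translate as efficiently as possible into diameter shrinkage.
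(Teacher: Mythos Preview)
Your architecture is right and matches the paper: localize to a component $\hat\Gamma_{\delta_n}$ via Lemma~\ref{relationcomponents}, take $v_*$ to be the unique vertex of the largest-radius color (Corollary~\ref{diamcor}.2), set $S=\hat\Gamma_{\delta_n}\setminus\{v_*\}$, and bound $\mathrm{diam}(\sum_{v\in\hat\Gamma_{\delta_n}}\xi_v)$ via Lemma~\ref{twoballs}. You also correctly diagnose that the naive estimate loses by $\delta_n$, and correctly guess that an induction on $n$ is what closes the gap. But the proposal stops at the diagnosis; the missing idea is specific and is the heart of the paper's argument.

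The paper's induction is not a color-elimination in the style of Proposition~\ref{pathbound}. It is induction on $n$ in the \emph{statement}: if the theorem holds for $n-1$ colors, one may assume a minimality hypothesis---no subfamily with fewer than $n$ colors defines a $\delta_{n-1}$-relation. Under this hypothesis, the component $\hat\Gamma_{\delta_{n-1}}\subset\hat\Gamma_{\delta_n}$ containing a $0$-extremal vertex already contains all $n$ colors. Now split the colors into \emph{large} ($r_i\geq r_1-\delta_n$) and \emph{small}. Each large-color vertex is unique in $\hat\Gamma_{\delta_n}$ and therefore lies in $\hat\Gamma_{\delta_{n-1}}$, so for it $|\mathcal X_v|\geq d-\delta_{n-1}$ (not merely $d-\delta_n$); each small-color vertex has $r_v\leq r_1-\delta_n$. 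In either case Lemma~\ref{fellowtravel1} yields $\langle p,c_v\rangle\geq d-\tfrac{\delta_n+\delta_{n-1}}{2}-r_1$ for any $\delta_n$-extremal $p\in\mathcal X_v$, and hence $\langle c_v,c_w\rangle\geq d-\tfrac{\delta_n+\delta_{n-1}}{2}-r_1-(n+1)\delta$ for all $v,w\in\hat\Gamma_{\delta_n}$. This is strictly sharper than what Lemma~\ref{centerdistance} alone gives: the loss is the \emph{average} $\tfrac{\delta_n+\delta_{n-1}}{2}$, not $\delta_n$. With this, a projection analysis along $[o,c_{v_1}]$ (your option~(a)) places every surviving support point inside a ball of radius $r_1+(n+3)\delta$ about a point $c^*$ at distance $d-\delta_{n-1}-r_1-(n+1)\delta$ from $o$; Lemma~\ref{twoballs} then gives diameter $\leq 2r_1-(\delta_n-\delta_{n-1})+(2n+6)\delta$, and the recursion $\delta_n=\delta_{n-1}+(2n+9)\delta$ makes this exactly $\mathrm{diam}(\xi_{v_1})-\delta$.

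So the genuine gap in your proposal is the absence of the minimality/large--small dichotomy, which is what converts the lossy $\delta_n$ into the winning $\tfrac{\delta_n+\delta_{n-1}}{2}$; without it neither your option (a) nor (b) alone will close the constants.
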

\begin{remark}
The set $S\sqcup\{v_*\}$ will be constructed as the set of vertices of a component of $\Gamma_{\delta_k}(\Xi)$ for some $1\leq k\leq n$. 
\end{remark}
\begin{remark}
Note that the minimal displacement condition is 
$$
\rho/\delta>62n+10+8\log_2((n+1)!).
$$
So, $\rho/\delta>100\cdot\log_2 ((n+1)!)$ is enough to satisfy it. 
\end{remark}
\begin{remark}
We may assume that $\mathcal H$ has the additional property that every finite set has a $0$-center, which is the case if $\mathcal H$ is proper (i.e. closed balls are compact) or complete $\mathrm{CAT}(0)$. In other cases, one can choose a non-principal ultrafilter $\omega$ and replace $\mathcal H$ with the restricted ultraproduct $\mathcal H^{\omega}$, if necessary (see {\it Radii and centers} on page 6).
\end{remark}

\subsection{The case of trees\label{hogangeloniproof}}

We keep the notations of the previous paragraph unchanged, but assume that
the space $\mathcal{H}$ is a tree. This is the case $(\delta = 0)$ treated by
Hog-Angeloni in {\cite{hogangeloni}}. We slightly rephrase her proof.

Recall that $\Gamma_0 = \Gamma_0 (\Xi)$ is the graph whose vertices are the
elements $v$ in $V$ such that if $\mathcal X_v$ is the support of $\xi_v$, then $\mathcal X_v$
contains an extremal point, i.e. $| \xi_v | = d = \max_{v \in V} | \xi_v |$,
and there is an edge between $v$ and $w$ for each extremal point in $\mathcal X_v\cap\mathcal X_w$. 
The colors $\{ 1, \ldots, n \}$ are organized so that the $r_i$
(the radii of the $\mathcal X_i$) are in decreasing order: $r_1 \geq \cdots
\geq r_n$.

Let $\hat{\Gamma}_0$ be a component of $\Gamma_0$ containing a vertex $v_*$
with color $1$, the color of largest radius. By Lemma \ref{relationcomponents}, this graph defines a $0$-relation
\[ \left| \sum_{v \in \hat{\Gamma}_0} \xi_v \right| < \max_{v \in
\hat{\Gamma}_0} | \xi_v | . \]
By Corollary \ref{diamcor}, $v_*$ is the unique vertex in $\hat{\Gamma}_0$ of color $1$. So, removing $v_*$ from $\hat\Gamma_0$ and setting $S =
\hat{\Gamma}_0 - \{v_*\}$, we see that to conclude the proof of the theorem we need to estimate the diameter of the group ring element $\sum_{v\in\hat\Gamma_0}\xi_v=\xi_{v_*}+\sum_{v\in S}\xi_v$. 

Note that for any two adjacent vertices $v$ and $w$ in $\hat{\Gamma}_0$,
both centers $c_v$ and $c_w$ lie on a geodesic $[o, p]$ from the origin to an
extremal point. The oriented geodesics $[o, c_v]$ and $[o, c_w]$ coincide
along a segment of length $\min \{|c_v |, |c_w |\} \geq d - r_1 = | c_{v_*} |
.$ By following a path from $v_*$ to $v$ in the graph $\hat{\Gamma}_0$ and noting that
$r_1 \geq r_w$ for every vertex $w$ along that path, we conclude that
$c_{v_*}$ belongs to every geodesic $[o, c_v]$, and
\[ |c_{v_*} - c_v | = r_1 - r_v . \]
For every $v,$ the support of $\xi_v$ is contained in the closed ball $B
(c_{v_*}, r_1)$, so the same is true for the support of $\sum_{v \in\hat\Gamma_0} \xi_v$. Moreover, because of the $0$-relation, this support has no $0$-extremal points, i.e. it is contained in the ball $B (o, d-\alpha)$ for some $\alpha > 0$. So, applying Lemma \ref{twoballs}, we see that the the diameter of the support of $\sum_{v\in\hat\Gamma_0}\xi_v$ is at most
$$
d - \alpha + r_1 - |o - c_{v_*} | = 2 r_1 -\alpha= \mathrm{diam} (\xi_{v_*}) -
\alpha .
$$
This finishes the proof. 
\begin{remark}
In this result, we do not use that $\mathcal{H}$ is a combinatorial tree. It
might be an $\mathbb{R}$-tree with a free action of a surface group, for instance. But, for proving that the algorithm ends in a finite number of steps, we need that the diameter decreases by a given value, say $1$.
\end{remark}

\subsection{Proof of Theorem \ref{maintheorem} for hyperbolic spaces\label{proofsubsection}}



In order to prove Theorem \ref{maintheorem}, we keep the notations of \ref{restate} unchanged and proceed by induction on the number $n$ of colors. 

Let us fix some notation. We are given a collection of elements $\Xi=(\xi_v)_{v\in V}$ defining a $\delta_n$-relation. Denote $d=\max_{v\in V}|\xi_v|$. For any $\mu\geq 0$ we abbreviate $\Gamma_{\mu}=\Gamma_{\mu}(\Xi)$, so that vertices of $\Gamma_{\mu}$ are those $v\in V$ for which $|\xi_v|\geq d-\mu$. 
\subsection*{Base case}
If there is one color, then $\delta_1=17\delta$ and the minimum displacement is greater than $4\delta_1+12\delta$. So, Corollary \ref{diamcor} applies and implies that connected components of $\Gamma_{\delta_1}$ are points, and there are no $\delta_1$-relations. 
\begin{remark}
This case implies, in particular, that $\mathbb K[G]$ has no zero divisors. (The zero-divisor relation $0=\alpha\xi=\sum_{g\in G}\alpha^gg\xi$ is a $\delta_1$-relation defined by a finite collection of elements of a single color.)
\end{remark}
\subsection*{Inductive step}
Suppose now that we know the theorem for $n-1$ colors and want to prove it for $n$. By the inductive hypothesis, we may assume that $\Xi$  satisfies the following ``minimality'' condition:
\begin{itemize}
\item
No subset $W\subset V$ consisting of fewer than $n$ colors defines a $\delta_{n-1}$-relation. 
\end{itemize}
Since we will consider different values of the parameter $\mu$, it is useful to keep in mind that for $\mu<\mu'$ the graph $\Gamma_{\mu}$ is a subgraph of $\Gamma_{\mu'}$. In particular, for a vertex $v_0\in\Gamma_0$ (corresponding to an element $\xi_{v_0}$ whose support contains an extremal point) we have inclusions 
$$
v_0\in\Gamma_0\subset\Gamma_{\delta_{n-1}}\subset\Gamma_{\delta_n}. 
$$  
Denote by $\hat\Gamma_{\delta_{n-1}}$ and $\hat\Gamma_{\delta_n}$ the connected components of $\Gamma_{\delta_{n-1}}$ and $\Gamma_{\delta_n}$ containing the vertex $v_0$. Since $\Xi$ defines a $\delta_n$-relation (and $\delta_n>\delta_{n-1}$) Lemma \ref{relationcomponents} implies that $\hat{\Gamma}_{\delta_{n-1}}$ defines a $\delta_{n-1}$-relation, so minimality implies $\hat\Gamma_{\delta_{n-1}}$ contains all $n$ colors. 

\subsubsection*{Large and small colors}
Order the colors in decreasing order, $r_1\geq\dots\geq r_n$, and let $k$ be the largest integer for which $r_k\geq r_1-\delta_n$. We call $\{1,\dots,k\}$ the large colors and the rest small colors. 
The minimal displacement $> 4\delta_n+(10+2n)\delta$ assumption implies, by Corollary \ref{diamcor}.2, that each large color appears exactly once in the connected graph $\hat\Gamma_{\delta_n}$.
Thus, we can identify the set of large colors $\{1,\dots,k\}$ with a set of $k$ vertices $\{v_1,\dots,v_k\}$ in $\hat\Gamma_{\delta_n}$. 
Since each color appears in $\hat\Gamma_{\delta_{n-1}}$, we conclude
$$
\{v_1,\dots,v_k\}\subset\hat\Gamma_{\delta_{n-1}}\subset\hat\Gamma_{\delta_n}.
$$

\subsubsection*{Centers}
Next, we compare the positions of the centers of the sets $\mathcal X_v$. We will prove
\begin{proposition}
\label{centergromovproduct}
Suppose $v$ and $w$ are vertices in $\hat \Gamma_{\delta_n}$. Then 
$$
\left<c_v,c_w\right>\geq d-{\delta_n+\delta_{n-1}\over 2}-r_1-(4+\log_2(n+1))\delta.
$$
\end{proposition}
We need the following lemmas. 
\begin{lemma}
\label{largeandsmall}
Suppose that $v\in\hat\Gamma_{\delta_n}$ and $p\in\mathcal X_v$ is a $\delta_n$-extremal point of $\cup_{v\in V}\mathcal X_v$. 
\begin{enumerate}
\item[1.] 
If the color of $v$ is small, then $\left<p,c_v\right>\geq d-r_1$.
\item[2.] 
If the color of $v$ is large, then $\left<p,c_v\right>\geq d-{\delta_n+\delta_{n-1}\over 2}-r_1$.
\end{enumerate}
\end{lemma}
\begin{proof}
Since $p$ is $\delta_n$-extremal, Lemma \ref{fellowtravel1} implies
$$
\left<p,c_v\right>\geq{|\mathcal X_v|+|p|\over 2}-r_v\geq{|\mathcal X_v|+d-\delta_n\over 2}-r_v.
$$
If the color of $v$ is small, then $r_v\leq r_1-\delta_n$ and $|\mathcal X_v|\geq d-\delta_n$ so that $\left<p,c_v\right>\geq d-r_1$. If the color of $v$ is large, then $v\in\hat\Gamma_{\delta_{n-1}}$ so $|\mathcal X_v|\geq d-\delta_{n-1}$ and $-r_v\geq-r_1$, implying that $\left<p,c_v\right>\geq{d-\delta_{n-1}+d-\delta_n\over 2}-r_1$.
\end{proof}
\begin{lemma}
\label{adjacentvertices}
If $v$ and $w$ are adjacent vertices in $\hat\Gamma_{\delta_n}$, then
$$
\left<c_v,c_w\right>\geq d-{\delta_n+\delta_{n-1}\over 2}-r_1-\delta.
$$
\end{lemma}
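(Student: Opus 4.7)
The plan is to bring the Gromov 4-point condition to bear at the $\delta_n$-extremal point $p\in\mathcal X_v\cap\mathcal X_w$ that witnesses the edge $vw$. Since $|p|\geq d-\delta_n$, applying the 4-point inequality at the basepoint $o$ yields
$$
\left<c_v,c_w\right>\geq\min\bigl(\left<c_v,p\right>,\left<c_w,p\right>\bigr)-\delta,
$$
so it suffices (up to an $\epsilon$-center slack absorbed by choosing $\epsilon$ small, as elsewhere in this paper) to prove $\left<c_v,p\right>\geq d-\tfrac{\delta_n+\delta_{n-1}}{2}-r_1$, together with the analogous inequality for $w$.

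Lemma \ref{fellowtravel1} gives $\left<c_v,p\right>\geq\tfrac{1}{2}(|\mathcal X_v|+|p|)-r_v-\epsilon$. The bare hypothesis $v\in\hat\Gamma_{\delta_n}$ only guarantees $|\mathcal X_v|\geq d-\delta_n$, which feeds in to the weaker bound $d-\delta_n-r_v-\epsilon$. To recover the extra $\tfrac{\delta_n-\delta_{n-1}}{2}$, I will split on whether $v$ carries a large color ($r_1-r_v\leq\delta_n$) or a small color ($r_1-r_v>\delta_n$); the two inputs to improve are $|\mathcal X_v|$ and the radius gap, and exactly one of them is gainful in each case.

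In the large color case I claim $v\in\hat\Gamma_{\delta_{n-1}}$, which upgrades $|\mathcal X_v|$ to $\geq d-\delta_{n-1}$; Lemma \ref{fellowtravel1} then directly delivers $\left<c_v,p\right>\geq d-\tfrac{\delta_n+\delta_{n-1}}{2}-r_v-\epsilon\geq d-\tfrac{\delta_n+\delta_{n-1}}{2}-r_1-\epsilon$. This is where the induction enters: the inductive minimality of $\Xi$ guarantees that $\hat\Gamma_{\delta_{n-1}}$ meets every color; Corollary \ref{diamcor}.2 applies to both $\Gamma_{\delta_n}$ and $\Gamma_{\delta_{n-1}}$ (the displacement hypothesis $\rho>4\delta_n+(10+2n)\delta$ is exactly tight for the large-color uniqueness in $\Gamma_{\delta_n}$ and even easier for $\Gamma_{\delta_{n-1}}$ since $\delta_{n-1}<\delta_n$), so for each large color there is exactly one vertex in each relevant component; and the inclusion $\hat\Gamma_{\delta_{n-1}}\subseteq\hat\Gamma_{\delta_n}$ of vertex sets then forces the $\hat\Gamma_{\delta_{n-1}}$-representative of that color to be $v$ itself.

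In the small color case no improvement of $|\mathcal X_v|$ is needed: rewriting the naive bound as
$$
d-\delta_n-r_v-\epsilon=d-\tfrac{\delta_n+\delta_{n-1}}{2}-r_1-\epsilon+(r_1-r_v)-\tfrac{\delta_n-\delta_{n-1}}{2},
$$
the inequality $r_1-r_v>\delta_n>\tfrac{\delta_n-\delta_{n-1}}{2}$ makes the correction positive. Running the same dichotomy for $w$ and combining through the 4-point inequality completes the proof. The only real subtlety is the large-color step, where the inductive minimality hypothesis on $\Xi$ is essential to place $v$ inside $\hat\Gamma_{\delta_{n-1}}$; the small-color step is purely metric.
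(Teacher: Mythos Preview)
Your proof is correct and follows essentially the same approach as the paper: split on large versus small color, apply Lemma~\ref{fellowtravel1} at the common $\delta_n$-extremal point, and combine via the hyperbolic inequality $\left<c_v,c_w\right>\geq\min(\left<c_v,p\right>,\left<p,c_w\right>)-\delta$. The paper handles the large-color step by invoking the inclusion $\{v_1,\dots,v_k\}\subset\hat\Gamma_{\delta_{n-1}}$ established just before the lemma; your explicit justification via minimality plus Corollary~\ref{diamcor}.2 uniqueness is exactly how that inclusion was obtained.
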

\begin{proof}
If $v$ and $w$ are adjacent, then there is a $\delta_n$-extremal point $p\in\mathcal X_v\cap\mathcal X_w$. By definition of $\delta$-hyperbolicity, $\left<c_v,c_w\right>\geq \min(\left<c_v,p\right>,\left<p,c_w\right>)-\delta$ and the right hand side is $\geq d-{\delta_n-\delta_{n-1}\over 2}-r_1-\delta$ by Lemma \ref{largeandsmall}. 
\end{proof}
\begin{lemma}
\label{smallpath}
Suppose $v_0,v_1,\dots,v_m$ is a path in $\hat\Gamma_{\delta_n}$ consisting of vertices with small colors. Then 
$$
\left<c_{v_0},c_{v_m}\right>\geq d-r_1-(n+1)\delta.
$$
\end{lemma}
\begin{proof}
We may assume the path is embedded. Then, by Proposition \ref{pathbound}, $m<2^n$. Since the path is in $\hat\Gamma_{\delta_n}$, there is a sequence of $\delta_n$-extremal points $p_i$ such that $p_i\in\mathcal X_{v_i}\cap \mathcal X_{v_{i+1}}$. Applying the Gromov product inequality to the sequence of ($<2^{n+1}$) points $v_0,p_0,v_1,p_1,\dots,p_{i-1},v_m$ and using Lemma \ref{largeandsmall}.1 gives the result. 
\end{proof}
\begin{remark}
Note that $\delta_n/\delta$ is superlinear in $n$, and in particular $(\delta_n+\delta_{n-1})/2>n\delta$ for $n>0$. Therefore, the right hand side in Lemma \ref{smallpath} is larger than in Lemma \ref{adjacentvertices}. 
\end{remark}
\begin{proof}[Proof of Proposition \ref{centergromovproduct}]
Let $v=v_0,\dots,v_m=w$ be an embedded path of vertices from $v$ to $w$. Note that it contains at most $n$ large vertices. For every maximal subpath that consists entirely of small vertices (there are at most $n+1$ such subpaths), throw out all but the initial and final vertex. We are left with a sequence of at most $2(n+1)+n$ vertices $v=v_0',v'_1,\dots,v'_k=w$, where any two consecutive vertices are either adjacent in $\hat\Gamma_{\delta_{n}}$, or the endpoints of a path consisting of small vertices. In either case $\left<c_{v_i'},c_{v'_{i+1}}\right>\geq d-{\delta_n+\delta_{n-1}\over 2}-r_1-\delta$ by the above two lemmas. Therefore, the Gromov product inequality implies $\left<c_v,c_w\right>\geq d-r_1-{\delta_n+\delta_{n-1}\over 2}-(1+\lceil\log_2(3n+1)\rceil)\delta$. Since $1+\lceil\log_2(3n+1)\rceil\leq 2+\log_2(3n+1)\leq 2+\log_23+\log_2(n+1)\leq 4+\log_2(n+1)$, we get the result.
\end{proof}

The correction term $(4+\log_2(n+1))\delta$ in Proposition \ref{centergromovproduct} recurs throughout the rest of the proof so, to save space and improve readability, set $L(n)=4+\log_2(n+1)$. 

Note that for any $w\in\hat\Gamma_{\delta_n}$ we have\footnote{Either $w$ is small, in which case $|c_w|\geq d-\delta_{n}-r_w\geq d-r_1$ or $w\in\hat\Gamma_{\delta_{n-1}}$ and then $|c_w|\geq d-\delta_{n-1}-r_w\geq d-\delta_{n-1}-r_1$.} 
\begin{equation}
\label{centersize}
|c_w|\geq d-\delta_{n-1}-r_1.
\end{equation} 
So, since $\delta_n\geq\delta_{n-1}$, we can pick $c'_w\in [o,c_w]$ such that $|c'_w|=d-{\delta_n+\delta_{n-1}\over 2}-r_1-L(n)\delta$. 
\begin{lemma}
For any vertices $v,w$ in $\hat\Gamma_{\delta_n}$ we have 
\begin{eqnarray*}
|c'_v-c'_w|&\leq&\delta,\\
|c_w-c_w'|&\geq&{\delta_n-\delta_{n-1}\over 2}+L(n)\delta.
\end{eqnarray*}
\end{lemma}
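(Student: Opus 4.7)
All three assertions are distance computations for points lying on geodesic rays from the origin $o$ to the $\epsilon$-centers $c_v$. The plan is to handle them in order: the first via the fellow-travel property of thin triangles applied to $[o,c_v,c_w]$, the second by noting that $c^*$ and $c'_{v_1}$ are by construction collinear on $[o,c_{v_1}]$, and the third by combining Lemma \ref{distancetocenter} (the distance-to-center estimate) with a case split according to whether $w$ has large or small color. Throughout, I will choose $\epsilon$ small enough that the $\epsilon$-errors coming from Lemma \ref{distancetocenter} can be absorbed; this is permitted because the minimal-displacement hypothesis is strict.

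\textbf{First inequality.} By definition $c'_v$ and $c'_w$ are the points on the geodesics $[o,c_v]$ and $[o,c_w]$ at the common distance $t:=d-\tfrac{\delta_n+\delta_{n-1}}{2}-r_1-(n+1)\delta$ from $o$. I first check that $c'_w$ really lies on $[o,c_w]$: since $w\in\hat\Gamma_{\delta_n}$, one has $|\mathcal X_w|\geq d-\delta_n$, and Lemma \ref{distancetocenter} gives $|c_w|\geq d-\delta_n-r_w-\epsilon$, which exceeds $t$ for small $\epsilon$. Inequality (\ref{centerproduct}) says $\langle c_v,c_w\rangle\geq t$, so in the thin triangle $[o,c_v,c_w]$ the two rays $[o,c_v]$ and $[o,c_w]$ $\delta$-fellow-travel up to parameter $t$, yielding $|c'_v-c'_w|\leq\delta$.

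\textbf{Second equality.} Both $c^*$ and $c'_{v_1}$ are defined to lie on the single geodesic $[o,c_{v_1}]$, so $|c^*-c'_{v_1}|=\bigl||c^*|-|c'_{v_1}|\bigr|$. Subtracting the two defining formulas, $|c^*|=d-\delta_{n-1}-r_1-(n+1)\delta$ and $|c'_{v_1}|=d-\tfrac{\delta_n+\delta_{n-1}}{2}-r_1-(n+1)\delta$, gives exactly $\tfrac{\delta_n-\delta_{n-1}}{2}$.

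\textbf{Third inequality and main obstacle.} Since $c'_w\in[o,c_w]$, one has $|c_w-c'_w|=|c_w|-|c'_w|$, and Lemma \ref{distancetocenter} yields $|c_w|\geq|\mathcal X_w|-r_w-\epsilon$. I split on color. If $w$ has a \emph{small} color, then $r_w<r_1-\delta_n$ and $|\mathcal X_w|\geq d-\delta_n$, so $|c_w|>d-r_1-\epsilon$, and subtracting $|c'_w|$ gives $|c_w|-|c'_w|\geq\tfrac{\delta_n+\delta_{n-1}}{2}+(n+1)\delta-\epsilon$, which exceeds the target $\tfrac{\delta_n-\delta_{n-1}}{2}+(n+1)\delta$ by $\delta_{n-1}-\epsilon\geq 0$. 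If $w$ has a \emph{large} color, then $w$ is one of the unique representatives $v_1,\dots,v_k\in\hat\Gamma_{\delta_{n-1}}$, so the stronger bound $|\mathcal X_w|\geq d-\delta_{n-1}$ applies; combined with $r_w\leq r_1$ this gives $|c_w|\geq d-\delta_{n-1}-r_1-\epsilon$, hence $|c_w|-|c'_w|\geq\tfrac{\delta_n-\delta_{n-1}}{2}+(n+1)\delta-\epsilon$. Taking $\epsilon$ small finishes the proof. The main obstacle here is the large-color case, whose bound is tight up to $\epsilon$: the argument cannot be uniform in color, and one genuinely needs the two-scale filtration $\hat\Gamma_{\delta_{n-1}}\subset\hat\Gamma_{\delta_n}$ that comes from the inductive minimality assumption, otherwise only the weaker small-color bound would be available and the radius $|c^*|$ would have to be adjusted, breaking the diameter computation that precedes the lemma.
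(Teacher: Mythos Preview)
Your proof is essentially the same as the paper's, and the three parts are handled correctly with one small slip. In the first inequality you verify that $c'_w\in[o,c_w]$ by claiming $|c_w|\geq d-\delta_n-r_w-\epsilon$ exceeds $t=d-\tfrac{\delta_n+\delta_{n-1}}{2}-r_1-(n+1)\delta$; this fails when $w$ has large color and $r_w=r_1$ (the inequality would require $r_1-r_w>3.5\delta$). The fix is exactly the case split you carry out in the third part, which in fact yields $|c_w|\geq t$ in both cases; so the well-definedness of $c'_w$ is a byproduct of your argument for the third inequality, and you should either invoke that case split here or simply prove the third inequality first.
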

\begin{proof}
The first inequality follows from Proposition \ref{centergromovproduct} and hyperbolicity. 
The second inequality is obtained by subtracting $|c_w'|$ from both sides of (\ref{centersize}). 
\end{proof}

\subsubsection*{Bounding the diameter of support}
Recall that the family $\Xi$ defines a $\delta_n$-relation, so the support of $\sum_{v\in\hat\Gamma_{\delta_n}}\xi_v$ is contained in the ball $B(o,d-\delta_n)$. Let $c^*$ be the point in $[o,c_{v_1}]$ such that $|c^*|=d-\delta_{n-1}-r_1-L(n)\delta$. Then---by definition---we have $|c^*-c'_{v_1}|={\delta_n-\delta_{n-1}\over 2}$. 
Next, we will show that 
the ball $B(c^*,r_1+(L(n)+2)\delta)$ also contains the support of $\sum_{v\in\hat\Gamma_{\delta_n}}\xi_v$: 
\begin{lemma} 
\label{radiuslemma}
For $w\in\hat\Gamma_{\delta_n}$ and a point $p\in\mathcal X_w$ that is not $\delta_n$-extremal we have 
$$
|c^*-p|\leq r_1+(L(n)+2)\delta.
$$
\end{lemma}


\begin{proof}
Let $p'$ be the projection of $p$ onto $[o,c_w]$. There are two cases. 
\begin{itemize}
\item
If $p'\in[o,c_w']$ then 
$$
{\delta_n-\delta_{n-1}\over 2}+|c_w'-p|\leq |c_w-c'_w|+|c_w'-p|\stackrel{h}\leq |c_w-p|+\delta\leq r_1+\delta
$$

\item
If $p'\in[c_w',c_w]$ then, since $p$ is not $\delta_n$-extremal, 
$$
\left(d-r_1-{\delta_n+\delta_{n-1}\over 2}-L(n)\delta\right)+|c_w'-p|=|c'_w|+|c_w'-p|\stackrel{h}\leq|p|+\delta\leq d-\delta_n+\delta.
$$
\end{itemize}
\begin{remark}
The inequalities following from hyperbolicity are denoted $\stackrel{h}\leq$ for emphasis. 
\end{remark}
Rearranging, we see that in either case we have obtained the inequality 
$$
|c_w'-p|\leq r_1
-{\delta_{n}-\delta_{n-1}\over 2}
+(L(n)+1)\delta.
$$ 
Therefore, 
\begin{eqnarray*}
|c^*-p|&\leq&|c^*-c'_{v_1}|+|c'_{v_1}-c'_{w}|+|c'_w-p|,\\
&\leq&\left({\delta_n-\delta_{n-1}\over 2}\right)+\delta+\left(r_1-{\delta_n-\delta_{n-1}\over 2}+(L(n)+1)\delta\right),\\
&=&r_1+(L(n)+2)\delta. 
\end{eqnarray*}
which finishes the proof. 
\end{proof}

 We've shown that the support of $\sum_{v\in\hat\Gamma_{\delta_n}}\xi_v$ is contained in the intersection of balls $B(o,d-\delta_n)\cap B(c^*,r_1+(L(n)+2)\delta)$. Therefore, Lemma \ref{twoballs} implies the diameter of the support is 
\begin{eqnarray*}
&\leq&(d-\delta_n)+(r_1+(L(n)+2)\delta)-(d-r_1-\delta_{n-1}-L(n)\delta)+2\delta\\
&=&(2r_1-2\delta)-\delta_n+\delta_{n-1}+(2L(n)+6)\delta\\
&\leq&\mathrm{diam}(\xi_{v_1})-\delta,
\end{eqnarray*}
where for the last inequality we've used the fact that $2r_1\leq\mathrm{diam}(\xi_{v_1})$ by Lemma \ref{diamvrad} and the inductive description of $\delta_n$. Since $v_1$ is the unique vertex of color $1$ in $\hat\Gamma_{\delta_n}$, this establishes the theorem.

\section{Applications}
We now apply the algorithm. 
The main hypothesis in this section is:

\vspace{0.4cm}

\noindent ($\mathcal{H}_{n}$):\hspace{0.1cm} {\it $G$ acts on a $1$-hyperbolic space with displacement more than $100\log_2((n+1)!)$.}

\vspace{0.4cm}

\noindent
Let $\mathrm{E}_n (\mathbb{K} [G])$ be the subgroup of elementary matrices in $\mathrm{GL}_n
(\mathbb{K} [G])$. We begin with a linear algebraic lemma. 


\begin{lemma}
\label{elementary}
Suppose the group $G$ satisfies $\mathcal H_{n}$. 
Let $\xi=(\xi_1,\dots,\xi_n)\in\mathbb K[G]^n$.
\begin{enumerate}
\item[0.]
If the coordinates of $\xi$ are linearly dependent over $\mathbb K[G]$, then the $\mathrm{E}_n(\mathbb K[G])$-orbit of $\xi$ contains a vector which has at least one coordinate equal to $0$. 
\item[1.]
If there is $\alpha=(\alpha_1,\dots,\alpha_n)\in\mathbb K[G]^n$ satisfying $\alpha\cdot\xi=\sum\alpha_i\xi_i=1$, then the $\mathrm{E}_n(\mathbb K[G])$-orbit of $\xi$ contains $(\lambda g,0,\dots,0)$ for some $\lambda\in\mathbb K^*$ and $g\in G$. 
\end{enumerate}
\end{lemma}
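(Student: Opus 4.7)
The overall strategy is to prove both parts by iterating Theorem \ref{intromaintheorem}. Any elementary row operation $\xi\mapsto E\xi$ conjugates the coefficient vector to $\alpha E^{-1}$, which is still nonzero and still annihilates (in Part 0) or represents $1$ against (in Part 1) the new tuple; so the running hypothesis is preserved throughout the iteration.

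For Part 0, a dependence $\sum\alpha_i\xi_i=0$ has $|\cdot|=-\infty$ on the left, so the $\delta_n$-gap hypothesis of Theorem \ref{intromaintheorem} is satisfied whenever some $\alpha_i\xi_i\neq 0$. Each application of the theorem strictly decreases $\mathrm{diam}(\xi_{v_*})$ by more than $\delta$ for some coordinate $v_*$, so the iteration is well-founded and terminates in finitely many steps. At termination the hypothesis must fail, forcing $\max_i|\alpha'_i\xi'_i|=-\infty$; since $\alpha'\neq 0$ and $\mathbb K[G]$ has no zero divisors (the base case $n=1$ of Theorem \ref{maintheorem}, recorded in a remark of the preceding section), some $\xi'_i=0$.

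For Part 1, induct on $n$. The base case $n=1$ is immediate: $\alpha_1\xi_1=1$ makes $\xi_1$ a unit, and by Delzant's trivial-units theorem \cite{delzant} (applicable since $\mathcal H_{n,\delta}$ implies displacement $>4\delta$), $\xi_1=\lambda g$. In the inductive step, iterate Theorem \ref{intromaintheorem} using the relation $\sum\alpha_i\xi_i=1$, whose hypothesis reads $\max_i|\alpha_i\xi_i|>\delta_n$. As in Part 0, each successful iteration strictly decreases some $\mathrm{diam}(\xi_{v_*})$ by more than $\delta$. The crucial geometric fact is that the support of a nonzero element of $\mathbb K[G]$ lies in the orbit $G\cdot o$, whose distinct points are at distance at least $\rho>(2n+11)^2\delta$, so the diameter of any nonzero element takes values only in $\{0\}\cup[\rho,\infty)$. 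Hence once iteration has pushed some $\mathrm{diam}(\xi_{v_*})$ below $\rho$ it must land in $\{-\infty,0\}$: either $\xi_{v_*}=0$, in which case the complementary $(n-1)$-tuple remains unimodular and we conclude by the inductive hypothesis; or $\xi_{v_*}=\lambda g$ is a trivial unit, in which case the elementary operations $\xi_j\mapsto\xi_j-\xi_j(\lambda g)^{-1}\xi_{v_*}$ clear every other entry, and a suitable product of elementary matrices realizing the signed swap moves $\lambda g$ into position $1$.

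The main obstacle is excluding a ``stuck state'' for the Part 1 iteration, namely a configuration with $\max_i|\alpha'_i\xi'_i|\leq\delta_n$ in which every $\xi'_i$ still has diameter $\geq\rho$ (so no coordinate has yet become $0$ or a trivial unit). To rule this out, one applies the extremal-graph analysis of Section \ref{extremalsection}: Corollary \ref{diamcor} applied to the refined family $(\alpha_i^g g\xi_i)_{g\in A_i,\,1\leq i\leq n}$ of \S\ref{restate} either delivers a fresh coefficient vector $\alpha'$ satisfying the $\delta_n$-gap (so the iteration resumes) or exposes a $\mathbb K[G]$-linear dependence among the $\xi_i$'s, which we then feed into Part 0 to kill a coordinate and complete the induction. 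The sharp separation $\rho-\delta_n\gg\delta$ built into $\mathcal H_{n,\delta}$ is exactly what makes this analysis possible.
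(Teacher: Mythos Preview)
Your Part~0 is correct and essentially identical to the paper's argument (the paper phrases it as choosing $\xi'$ in the $\mathrm E_n$-orbit minimizing the sum of diameters of the coordinates and deriving a contradiction from Theorem~\ref{intromaintheorem}).

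For Part~1, your iteration is fine up to the point where you isolate the ``stuck state'' $\max_i|\alpha'_i\xi'_i|\leq\delta_n$ with all $\mathrm{diam}(\xi'_i)\geq\rho$. But this state cannot occur, and you already have the tool to see why: the very observation you made for diameters---that the support of any nonzero element lies in $G\cdot o$, whose non-identity points satisfy $|g\cdot o-o|\geq\rho$---applies equally to the absolute value. For nonzero $\eta\in\mathbb K[G]$ one has $|\eta|\in\{0\}\cup[\rho,\infty)$. Since $\sum_i\alpha'_i\xi'_i=1$, some $\alpha'_i\xi'_i\neq 0$; if $\max_i|\alpha'_i\xi'_i|\leq\delta_n<\rho$, then that $|\alpha'_i\xi'_i|=0$, hence $\alpha'_i\xi'_i\in\mathbb K^*$, hence $\xi'_i$ is a unit and, by \cite{delzant}, a trivial one. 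This is exactly how the paper's proof runs (tersely): at a minimizer of the sum of diameters of nonzero coordinates, pick $i$ with $\alpha'_i\xi'_i\neq 0$; if $|\alpha'_i\xi'_i|>0$ then in fact $|\alpha'_i\xi'_i|\geq\rho>\delta_n$ and Theorem~\ref{intromaintheorem} contradicts minimality; otherwise $|\alpha'_i\xi'_i|=0$ and $\xi'_i$ is a unit.

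Consequently your final paragraph is both unnecessary and, as written, not a proof. Corollary~\ref{diamcor} only bounds the combinatorial diameter of components of $\Gamma_\mu(\Xi)$ and gives uniqueness of large colors; it does not by itself ``deliver a fresh coefficient vector satisfying the $\delta_n$-gap'' nor ``expose a $\mathbb K[G]$-linear dependence among the $\xi_i$'', and you give no mechanism by which it would. So that paragraph is a genuine gap in your write-up---but one you should simply delete rather than try to fill, once you apply the $\{0\}\cup[\rho,\infty)$ dichotomy to $|\alpha'_i\xi'_i|$ instead of to $\mathrm{diam}(\xi'_i)$.
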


\begin{remark}
Elements satisfying 1 are called unimodular vectors by Bass and play a crucial role in algebraic K-theory. 
\end{remark}

%
%
%
%
%
%

\begin{proof}
0. Pick a vector $\xi'$ in the $\mathrm{E}_n(\mathbb K[G])$-orbit of $\xi$ that minimizes the sum of diameters of its coordinates. The coordinates of $\xi'$ are still linearly dependent. If none of them are zero, then Theorem \ref{intromaintheorem} would let us reduce the sum of diameters, contradicting the minimality assumption.

1. We argue the same way. Pick a vector $\xi'=\xi U$ in the $\mathrm{E}_n(\mathbb K[G])$-orbit of $\xi$ that minimizes the sum of diameters of {\it nonzero} coordinates. Then 
$$
1=\alpha\cdot\xi=\alpha U^{-t}\cdot \xi U=\alpha'\cdot\xi'.
$$ 
Pick $i$ with $\alpha_i'\xi_i'\not=0$. If $|\alpha_i'\xi_i'|>0$ then we can apply Theorem \ref{intromaintheorem} to reduce the sum of diameters of nonzero coordinates of $\xi'$, contradicting minimality. So, $|\alpha'_i\xi'_i|=0$, i.e. $\xi_i'$ is a unit. By \cite{delzant}, $\mathbb K[G]$ only has trivial units, so $\xi_i'=\lambda g$  for some $\lambda\in\mathbb K^*$ and $g\in G$. The conclusion follows by applying elementary transformations to $\xi'$. 
\end{proof}
  
\subsection{Freeness}
Lemma \ref{elementary} enables the study of finitely generated submodules of free modules.

\begin{theorem}
\label{freemodules}
Assume the group $G$ satisfies ${\mathcal{H}_{n}}$.
\begin{enumerate}
\item[1.] 
Every $n$-generated ideal in $\mathbb{K} [G]$ is a free $\mathbb{K}[G]$-module.
\item[2.] 
Every $n$-generated submodule of a free $\mathbb{K} [G]$-module is a free $\mathbb{K} [G]$-module.
\end{enumerate}
\end{theorem}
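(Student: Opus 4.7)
For Part 1 I plan to induct on $n$, with trivial base case $n=0$. Given an $n$-generated ideal $I \subset \mathbb{K}[G]$ with generators $\xi_1, \dots, \xi_n$, either these generators are linearly independent over $\mathbb{K}[G]$---in which case they form a free basis of $I$---or they are linearly dependent, in which case Lemma \ref{elementary}.0 produces a matrix $U \in \mathrm{E}_n(\mathbb{K}[G])$ such that $(\xi_1, \dots, \xi_n) U$ has a zero coordinate. Since $U$ is invertible, the remaining $n-1$ entries generate the same ideal $I$, and the inductive hypothesis (applicable because $\mathcal{H}_{n,\delta}$ implies $\mathcal{H}_{n-1,\delta}$) finishes the argument.

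For Part 2 the plan is to induct on the ambient rank $m$, with base case $m=1$ supplied by Part 1. For the step, let $N \subset \mathbb{K}[G]^m$ be generated by $v_1, \dots, v_n$, and let $\pi : \mathbb{K}[G]^m \to \mathbb{K}[G]$ denote projection onto the last coordinate. The ideal $\pi(N)$ is $n$-generated (by the $\pi(v_i)$), so by Part 1 it is free of some rank $k \leq n$. I would then apply Lemma \ref{elementary}.0 repeatedly to the tuple $(\pi(v_1), \dots, \pi(v_n))$: whenever its nonzero entries are linearly dependent, the lemma yields an elementary operation on $(v_1,\dots,v_n)$ (which preserves $N$, as the operation is right-multiplication by an invertible matrix) that introduces one additional zero $\pi$-image. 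This iteration terminates when the nonzero $\pi(v_i)$ are linearly independent, at which point they must form a basis of $\pi(N)$. After reordering I may therefore assume $\pi(v_1), \dots, \pi(v_k)$ is this basis and $\pi(v_{k+1}) = \cdots = \pi(v_n) = 0$, so that $v_{k+1}, \dots, v_n \in \mathbb{K}[G]^{m-1}$.

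The payoff is a direct sum decomposition $N = M \oplus N'$ with $M := \langle v_1, \dots, v_k \rangle$ and $N' := \langle v_{k+1}, \dots, v_n \rangle \subset \mathbb{K}[G]^{m-1}$: every $x = \sum_i \beta_i v_i \in N$ splits as $\sum_{i \leq k} \beta_i v_i + \sum_{i > k} \beta_i v_i$, and $M \cap N' = 0$ since an element of $M$ with zero $\pi$-image forces $\sum_{i \leq k} \beta_i \pi(v_i) = 0$, hence all $\beta_i = 0$ for $i \leq k$ by linear independence. Then $M \cong \pi(N)$ is free of rank $k$, while $N'$ is an $(n-k)$-generated submodule of $\mathbb{K}[G]^{m-1}$, so free by the inductive hypothesis on $m$. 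The point that needs the most care is termination of the iteration in the previous paragraph: one cannot reduce past $k$ nonzero entries because $\mathbb{K}[G]$ has invariant basis number (a rank-$k$ free module is not generated by fewer than $k$ elements), and every invocation of Lemma \ref{elementary}.0 is legitimate since $\mathcal{H}_{n,\delta}$ implies $\mathcal{H}_{j,\delta}$ for all $j \leq n$.
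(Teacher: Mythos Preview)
Your Part 1 is essentially the paper's own argument: induct on the number of generators, using Lemma \ref{elementary}.0 to drop a generator whenever there is a dependence.

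Your Part 2 is correct but takes a genuinely different route from the paper. The paper inducts on $d=\dim_{\mathbb K}(M\otimes_{\mathbb K[G]}\mathbb K)$: choosing any coordinate projection $p$ with $p(M)\neq 0$, Part 1 makes $p(M)$ a nonzero free ideal, so $M$ surjects onto $\mathbb K[G]$ and splits as $M\cong M'\oplus\mathbb K[G]$ with $\dim_{\mathbb K}(M'\otimes_{\mathbb K[G]}\mathbb K)=d-1$; no further manipulation of generators is needed. You instead induct on the ambient rank $m$ and work constructively with the given generating tuple, iterating Lemma \ref{elementary}.0 on the last-coordinate projections until the surviving ones are independent, then splitting $N=M\oplus N'$ with $N'\subset\mathbb K[G]^{m-1}$. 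Your approach is more hands-on and yields an explicit basis in terms of the original generators (which can be useful), at the cost of invoking Lemma \ref{elementary}.0 repeatedly and tracking the iteration; the paper's approach is shorter and uses only the freeness statement of Part 1, not the algorithm behind it. Two small remarks: the appeal to invariant basis number is fine but not really needed, since the number of surviving independent entries is by construction the rank of $\pi(N)$; and both arguments tacitly reduce an arbitrary free ambient module to finite rank by observing that an $n$-generated submodule lies in the span of finitely many coordinates.
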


\begin{proof}
1. Suppose we have shown that ideals generated by fewer than $n$ elements are free, and let $\mathcal{I}$ be an ideal in $\mathbb{K} [G]$ generated $n$ elements $\xi_1,\dots,\xi_n$. Consider the map
\begin{eqnarray*}
\mathbb K[G]^n&{\rightarrow}&\mathbb{K}[G],\\
(\alpha_1, \ldots, \alpha_n) & \mapsto & \alpha_1 \xi_1+\ldots +\alpha_n\xi_n.
\end{eqnarray*}
If this map is injective, then it provides an isomorphism from the free module $\mathbb K[G]^n$ to the ideal $\mathcal I$. If it is not injective, then there is a non-trivial relation $\alpha_1\xi_1 + \ldots + \alpha_n \xi_n = 0$. In other words the family $\xi_1,\dots,\xi_n$ is linearly dependent. By Lemma \ref{elementary}.0 we can do elementary transformations to replace $\xi_1,\dots,\xi_n$ by a generating set for $\mathcal I$ consisting of $n-1$ elements. Therefore $\mathcal I$ is free. 

2. Suppose $M \subset \mathbb{K} [G]^m$ is an $n$-generated submodule. Note that $M \otimes_{\mathbb{K} [G]} \mathbb{K}$ is a finite dimensional $\mathbb{K}$-vector space of some dimension $d\leq n$. We argue by induction on $d$. If $M \neq 0$ there is a projection to a factor $p : \mathbb{K} [G]^m\rightarrow \mathbb{K} [G]$ such that $p (M)$ is non-trivial. Since $p (M)$ is an $n$-generated ideal, it is free as a $\mathbb{K} [G]$ module by part 1. It follows that the module $M$ maps onto a non-zero free $\mathbb{K} [G]$-module and, a fortiori, onto $\mathbb{K} [G]$. Therefore, the module $M$ splits as $M \cong M' \oplus \mathbb{K} [G]$. Note that $\dim_{\mathbb{K}} (M\otimes_{\mathbb{K} [G]} \mathbb{K}) - 1 = \dim_{\mathbb{K}} (M' \otimes_{\mathbb{K} [G]} \mathbb{K})$, and the result follows.
\end{proof}

From this theorem and Stallings result on groups with infinitely many ends, we can also deduce:

\begin{corollary}
\label{stallings}
Under the same hypotheses, every $n$ generated subgroup of the group $G$ is free.
\end{corollary}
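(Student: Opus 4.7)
The plan is to apply Theorem \ref{freemodules} to the subgroup $H$ itself in order to show that its augmentation ideal is free, and then invoke Stallings' structure theorem for groups with more than one end, inducting on rank.

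If $H\le G$ is generated by $h_1,\dots,h_n$, the restricted action realizes $H$ as acting on $\mathcal H$ with minimal displacement at least that of $G$, so $H$ satisfies $\mathcal H_{n,\delta}$; in particular $H$ is torsion-free (the minimal displacement being positive). The augmentation ideal $I_H\subset \mathbb K[H]$ is generated as a left $\mathbb K[H]$-module by the $n$ elements $h_i-1$ (standard, using $hk-1=(h-1)k+(k-1)$ and $h^{-1}-1=-h^{-1}(h-1)$). Applying Theorem \ref{freemodules}.1 to $H$, the ideal $I_H$ is a free $\mathbb K[H]$-module, and the short exact sequence $0\to I_H\to\mathbb K[H]\to\mathbb K\to 0$ then gives a free resolution of length $1$, so $\mathrm{cd}_{\mathbb K}(H)\le 1$.

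Assuming $H$ is nontrivial (hence infinite), a direct computation of $\mathrm{Ext}^1_{\mathbb K[H]}(\mathbb K,\mathbb K[H])$ from this resolution, using that $\mathbb K[H]^H=0$ for infinite $H$, yields $H^1(H;\mathbb K[H])\ne 0$; that is, $H$ has more than one end. Stallings' theorem then provides a splitting of $H$ as an amalgamated free product or an HNN extension over a finite subgroup, and torsion-freeness of $H$ forces that edge group to be trivial. Thus either $H=A*B$ with both factors nontrivial, or $H=A*\mathbb Z$. By Grushko's theorem each free factor has rank strictly less than $n$, and each factor still acts on $\mathcal H$ with displacement $>(2n+11)^2\delta\ge (2m+11)^2\delta$ for its rank $m<n$, so the inductive hypothesis applies. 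Induction on $n$, with base case $n=1$ (where $H$ is torsion-free cyclic, hence $\mathbb Z$ or trivial), shows each factor is free, and a free product of free groups is free.

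The main potential obstacle is the passage from ``$I_H$ is free'' to ``$H$ has more than one end''. I would do this via the explicit $\mathrm{Ext}^1$ calculation above; alternatively, one can bypass ends entirely by citing the Stallings--Swan--Dunwoody theorem that a finitely generated group of cohomological dimension $\le 1$ is free, but invoking Stallings' ends theorem is in keeping with the hint ``Stallings result on groups with infinitely many ends''. All other ingredients (Stallings' ends theorem, Grushko, torsion-freeness, monotonicity of the displacement hypothesis in $n$) combine straightforwardly.
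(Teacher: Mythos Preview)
Your proof is correct and follows essentially the same route as the paper. The paper's proof is terser---it observes that $H$ inherits $\mathcal H_{n,\delta}$, that the augmentation ideal $(h_1-1,\dots,h_n-1)$ is free in $\mathbb K[H]$ by Theorem~\ref{freemodules}.1, that $H$ is torsion-free, and then cites Dicks--Dunwoody for the conclusion---but the remark immediately following the proof unpacks that citation into exactly the argument you wrote: free augmentation ideal $\Rightarrow H^1(H;\mathbb K[H])\neq 0\Rightarrow$ more than one end $\Rightarrow$ Stallings splitting over a trivial (by torsion-freeness) subgroup $\Rightarrow$ Grushko induction on rank.
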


\begin{proof}
Let $\mathbb{K}$ be any field, for instance $\mathbb{K}=\mathbb{F}_2$. Suppose $(g_1, \ldots, g_n) = H<G$ is an $n$-generated subgroup. Then its augmentation ideal $(g_1 - 1, \ldots, g_n - 1)$ is a free ideal in $\mathbb{K} [H]$ by Theorem \ref{freemodules}.1. Clearly the group $H$ is torsion-free, so by 3.14 of \cite{dicksdunwoody}, the group $H$ is a free group.
\end{proof}

\begin{remark}
For the convenience of the reader, let us recall the argument of Dicks and Dunwoody (\cite{dicksdunwoody}). The main ingredient behind the passage from ideals to subgroups is Stallings theorem on ends (\cite{stallings}). If the augmentation ideal $\mathcal{I}$ is free and finitely generated, we have a length one resolution $0\rightarrow \mathcal{I} \rightarrow \mathbb{F}_2 [G] \rightarrow\mathbb{F}_2 \rightarrow 0$ of $\mathbb F_2$ by finitely generated, free modules. Hence $H^*(G;\mathbb F_2[G])$ is non-zero in degree $0$ or $1$ (or both).\footnote{The reason is that if a chain complex $C$ of finitely generated free $\mathbb F_2[G]$-modules is not acyclic, the its $\mathbb F_2[G]$-hom dual $C^*$ is not acyclic either, since $C^{**}=C$.} Since $G$ is non-trivial and torsion-free, it is infinite, so---using the identification of $H^0$ with $G$-invariants--- we see that $H^0(G;\mathbb F_2[G])=(\mathbb F_2[G])^G=0$. Therefore $H^1 (G, \mathbb{F}_2 [G]) \neq 0$: the group $G$ has several ends. As the group $G$ is torsion free, Stallings theorem implies that either it is infinite cyclic or that it splits as a free product $G = G_1 \ast G_2$. By Grushko's theorem, the groups $G_1, G_2$ have smaller rank, and one can conclude by induction. This argument was used by Stallings to prove that a group of cohomological dimension one is a free group.
\end{remark}

\begin{remark}
The qualitative result---large displacement ($\geq \rho$) implies that every $n$-generated subgroup is free---is not new. It has been stated by Gromov \cite{gromovhyperbolic}, and proofs have been given by Arzhantseva \cite{arzhantseva} and Kapovich-Weidmann \cite{kapovichweidmann}. The best previous quantitative result---due to Gromov \cite{gromovexpanders} p.763---required  $\rho/\delta>10^6\cdot n\log_2n$. Conjecturally, $\rho/\delta>1000\cdot \log_2(n+100)$ should be enough (see 5.3 in \cite{gromovhyperbolic}).

\end{remark}

\subsection{The group $\text{GL}_n (\mathbb{K} [G])$}
Recall that $\text{GE}_n (\mathbb{K} [G])$ is the subgroup of $\text{GL}_n (\mathbb{K} [G])$ generated by elementary and diagonal matrices. 

\begin{theorem}
\label{gltheorem}
Assume $G$ satisfies ${\mathcal{H}_{n}}$. Then 
$$
\mathrm{GL}_n (\mathbb{K} [G])=\mathrm{GE}_n (\mathbb{K} [G]).
$$
\end{theorem}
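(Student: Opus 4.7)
The plan is to mimic Gaussian elimination, using Lemma \ref{elementary}.1 in place of the Euclidean algorithm, and induct on $n$. The base case $n=1$ identifies $\mathrm{GL}_1(\mathbb K[G])$ with the group of units of $\mathbb K[G]$; since $\mathcal H_{1,\delta}$ is strictly stronger than the displacement $>4\delta$ hypothesis of \cite{delzant}, every unit is trivial of the form $\lambda g$, and $\mathrm{GL}_1(\mathbb K[G])=\mathrm{GE}_1(\mathbb K[G])$ holds tautologically.

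For the inductive step, I would take $M\in\mathrm{GL}_n(\mathbb K[G])$ and focus on its first row $\xi=(\xi_1,\dots,\xi_n)$. The first column of $M^{-1}$ supplies elements $\alpha_1,\dots,\alpha_n\in\mathbb K[G]$ with $\sum_i\alpha_i\xi_i=1$, so Lemma \ref{elementary}.1 furnishes $U\in\mathrm E_n(\mathbb K[G])$ such that $\xi U=(\lambda g,0,\dots,0)$ for some $\lambda\in\mathbb K^{*}$ and $g\in G$. Multiplying $M$ on the right by $U$ and then by the diagonal matrix $D=\mathrm{diag}((\lambda g)^{-1},1,\dots,1)$ produces a matrix $N=MUD$ whose first row is $(1,0,\dots,0)$. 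Left-multiplying $N$ by the product $E\in\mathrm E_n(\mathbb K[G])$ of the elementaries $E_{i,1}(-N_{i,1})$ for $i=2,\dots,n$ clears the first column, so that $EMUD$ acquires the block form $\mathrm{diag}(1,N'')$ for some $N''\in\mathrm{GL}_{n-1}(\mathbb K[G])$.

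Since $(2(n-1)+11)^2<(2n+11)^2$, the hypothesis $\mathcal H_{n,\delta}$ implies $\mathcal H_{n-1,\delta}$, so the inductive hypothesis applies to $N''$ and yields $N''\in\mathrm{GE}_{n-1}(\mathbb K[G])$. Embedding $\mathrm{GE}_{n-1}$ into $\mathrm{GE}_n$ as the stabilizer of the first basis vector lifts the block matrix $\mathrm{diag}(1,N'')$ into $\mathrm{GE}_n(\mathbb K[G])$. Rearranging gives
$$
M=E^{-1}\,\mathrm{diag}(1,N'')\,D^{-1}U^{-1}\in\mathrm{GE}_n(\mathbb K[G]),
$$
completing the induction. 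The heavy lifting has already been packaged into Lemma \ref{elementary}.1, which itself rests on Theorem \ref{intromaintheorem} and the triviality of units from \cite{delzant}; the remainder is routine linear algebra. The only potential obstacle, compatibility of the displacement hypotheses across the induction, evaporates thanks to the monotonicity of $(2n+11)^2\delta$ in $n$.
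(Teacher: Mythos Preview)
Your argument is correct and is essentially the paper's own proof: use Lemma \ref{elementary}.1 to reduce one row (or column) of the invertible matrix to a single unit entry via elementary operations, clear the complementary column (or row), and induct on $n$. One small bookkeeping point worth fixing: with $\xi$ the first \emph{row} of $M$ and $\alpha$ the first \emph{column} of $M^{-1}$, the identity $MM^{-1}=I$ gives $\sum_i \xi_i\alpha_i=1$, not $\sum_i\alpha_i\xi_i=1$, and in the non-commutative ring $\mathbb K[G]$ these are different, so Lemma \ref{elementary}.1 does not apply as stated. The paper avoids this by taking $\xi$ to be the first \emph{column} of the matrix and $\alpha$ the first row of its inverse (reading off $M^{-1}M=I$), which yields the relation in the order required; with that trivial swap your proof goes through verbatim.
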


\begin{proof}
We copy the usual proof of the fact that $\text{GL}_n (\mathbb{Z})$ is generated by elementary matrices and diagonal matrices with entries $\pm 1$. Let $X = (\xi_{ij})$ be in $\text{GL}_n (\mathbb{K} [G])$, and choose $A=(\alpha_{ij})$ in $\text{GL}_{_n} (\mathbb{K} [G])$ such that $AX = 1$. As $\sum_{i = 1}^n \alpha_{1 i} \xi_{i 1} = 1$, we can apply Lemma \ref{elementary}.1, and deduce that there is a matrix $U$ in $\text{GE}_n (\mathbb{K} [G])$ such that the first row of $XU$ is $(u,0,\dots,0)$ for some unit $u\in\mathbb K[G]$. 
Left multiplying $XU$ by a product of elementary matrices, say $V \in \text{E}_n (\mathbb{K} [G])$, we obtain a matrix $VXU$ of the form 
$$
\left(\begin{array}{cc}
    u & 0\\
    0 & Y
\end{array}\right),
$$ 
where $Y$ is a matrix in $\text{GL}_{n - 1} (\mathbb{K}[G])$. So, the theorem follows by induction on $n$.
\end{proof}

{
\renewcommand{\thetheorem}{\ref{fg}}

\begin{theorem}
Assume $\mathcal H_{n}$.  If $\mathbb K$ is finite and $G$ is finitely generated, then $\mathrm{GL}_n (\mathbb{K} [G])$ is finitely generated.
\end{theorem}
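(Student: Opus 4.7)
The plan is to reduce to $\mathrm{GE}_n(\mathbb{K}[G])$ via Theorem \ref{gltheorem} and then exhibit an explicit finite generating set for it. The hypothesis $\mathcal{H}_{n,\delta}$ subsumes the displacement condition ($>4\delta$) from \cite{delzant} under which every unit of $\mathbb{K}[G]$ is trivial, i.e., of the form $\lambda g$ with $\lambda\in\mathbb{K}^*$ and $g\in G$; this is the one input from hyperbolic geometry that the argument needs.

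First I would handle the diagonals. Since every diagonal entry of a matrix in the diagonal subgroup of $\mathrm{GE}_n(\mathbb{K}[G])$ must be a unit, and all units are trivial, this subgroup is isomorphic to $(\mathbb{K}^*\times G)^n$. It is finitely generated because $\mathbb{K}^*$ is a finite cyclic group and $G$ is finitely generated; fix a finite generating set $\Delta$ for it.

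Next I would show that $\Delta\cup\{E_{ij}(1):1\leq i\neq j\leq n\}$ already generates $\mathrm{GE}_n(\mathbb{K}[G])$. The key conjugation identity is
$$
D\,E_{ij}(1)\,D^{-1}=E_{ij}\bigl(d_i d_j^{-1}\bigr),\qquad D=\mathrm{diag}(d_1,\dots,d_n).
$$
As $D$ ranges over the diagonal subgroup, $d_i d_j^{-1}$ ranges over all trivial units $\lambda g$, so this conjugation produces $E_{ij}(\lambda g)$ for every $\lambda\in\mathbb{K}^*$ and $g\in G$. Then, using the additivity $E_{ij}(\xi+\eta)=E_{ij}(\xi)\,E_{ij}(\eta)$ (valid for $i\neq j$) together with the fact that every $\xi\in\mathbb{K}[G]$ is a finite $\mathbb{K}$-linear combination of group elements, each $E_{ij}(\xi)$ decomposes into a finite product of matrices of the form $E_{ij}(\lambda g)$, and hence into a word in $\Delta\cup\{E_{ij}(1)\}_{i\neq j}$.

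Given Theorem \ref{gltheorem} and the triviality of units from \cite{delzant}, this argument is essentially formal; I do not anticipate any genuine obstacle. The one step that actually uses the hyperbolic hypothesis is the identification of the diagonal subgroup as the finitely generated group $(\mathbb{K}^*\times G)^n$, which would fail without the large displacement assumption that forces units to be trivial.
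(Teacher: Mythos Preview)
Your proof is correct and follows essentially the same approach as the paper: reduce to $\mathrm{GE}_n$ via Theorem~\ref{gltheorem}, invoke triviality of units from \cite{delzant}, and exhibit the finite generating set consisting of the $E_{ij}(1)$ together with finitely many diagonal generators. You supply more detail than the paper does---the conjugation identity and the additivity of elementary matrices---but the argument is the same.
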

\addtocounter{theorem}{-1}
}

\begin{proof}
Recall that, because of the large displacement assumption (greater than $4\delta$ is enough), all units in $\mathbb K[G]$ are trivial by \cite{delzant}. Let $e_1,\dots,e_n$ be the standard basis for $\mathbb K[G]^n$. The group $\text{GE}_n(\mathbb K[G])$ is generated by the finitely many elementary transformations of the form $(e_i\mapsto e_i+e_j)$ and multiplication of basis elements by units of the form $\lambda g$, where $\lambda$ is in $\mathbb K^*$ and $g$ is in a generating set for $G$. So, the corollary follows from the previous theorem.  
\end{proof}
\begin{remark}
The proof only uses that $\mathbb K^*$ is finitely generated. However, no example of an infinite field with finitely generated $\mathbb K^*$ is known. 
\end{remark}

Theorems \ref{freemodules}.1 and \ref{gltheorem} together establish Theorem \ref{mainfirtheorem} from the introduction. 

\subsection{Submodules of free $\mathbb{Z} [G]$-modules\label{zsubsection}}
Until now we've been studying the group algebra $\mathbb{K} [G]$ with coefficients in a field $\mathbb{K}$. Our next goal is to extend freeness results to the integral group ring $\mathbb{Z} [G]$. We follow the method Bass used in \cite{bass} to show projective $\mathbb Z[F]$-modules are free,
but with two differences. First, the article \cite{bass} studies group rings with coefficients in a principal ideal domain. 
We restrict ourselves to the ring $\mathbb{Z}$. Second, the main hypothesis of \cite{bass} is that $M$ is a projective module. Instead, we use 


\begin{itemize}
\item[$(\star)$] The module $M$ embeds as a submodule of a free module $\oplus\mathbb{Z} [G]$ such that the quotient $(\oplus \mathbb{Z}[G])/M$ is torsion free as an abelian group.
\end{itemize}


This condition is probably known to specialists but hard to locate in the literature. 
It is a weakening of the more familiar ``projective''. Indeed, if $M$ is a projective module, there exists a module $N$ such that $M \oplus N = \oplus \mathbb{Z} [G]$ is a free module, in particular the quotient $N$ is torsion free. It is useful for obtaining topological consequences thanks to the following important example.

\begin{example}
Let $X$ be a finite, connected, non-empty cell complex with fundamental group $G$, $\widetilde{X}$ its universal cover and $C_{*} := C_{*} (\widetilde{X}; \mathbb{Z})$ its cellular chain complex. Then, the kernels of the boundary maps $\partial : C_k \rightarrow C_{k - 1}$ and of the augmentation map $C_0 \rightarrow \mathbb{Z}$ all satisfy condition $(\star)$, as their quotients are submodules of the free modules $C_{k - 1}$ and of $\mathbb{Z}$, respectively (see the proof of Theorem \ref{cdim}). In particular, the augmentation ideal of $G$, the relation module of a generating set for $G$, and the second homotopy module of a presentation $2$-complex for $G$ all satisfy $(\star)$. 
\end{example}
\begin{remark}
In \cite{bass}, Bass shows that projective modules over $\mathbb K[\mathbb Z\times F_m]\cong\mathbb K[t,t^{-1}][F_m]$ are free. As pointed out by the referee, in our situation it may also be possible to use coefficients in the PID $\mathbb K[t,t^{-1}]$ to show that $n$-generated projective modules over $\mathbb K[\mathbb Z\times G]\cong\mathbb K[t,t^{-1}][G]$ are free, but we do not pursue this here.  
\end{remark}
Here is the main theorem of this subsection. 
\begin{theorem}
\label{freezmodules}
Suppose the group $G$ satisfies $\mathcal{H}_{n}$. Let $M$ be a $n$-generated submodule of a free module $\oplus \mathbb{Z}G$ such that $(\oplus \mathbb{Z}G)/ M$ is torsion free as an abelian group. Then $M$ is free.
\end{theorem}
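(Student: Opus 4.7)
The strategy is Bass's local-to-global technique, using Theorem~\ref{freemodules}.2 (with field coefficients $\mathbb{K}=\mathbb{Q}$ and $\mathbb{K}=\mathbb{F}_p$) as the local input. Set $R=\mathbb{Z}[G]$ and $N=R^m/M$; the hypothesis $(\star)$ says $N$ is $\mathbb{Z}$-torsion-free, hence $\mathbb{Z}$-flat. Tensoring the short exact sequence $0\to M\to R^m\to N\to 0$ over $\mathbb{Z}$ with $\mathbb{F}_p$ (respectively with $\mathbb{Q}$) therefore preserves exactness, exhibiting $M/pM$ (resp.\ $M\otimes\mathbb{Q}$) as an $n$-generated submodule of the free $\mathbb{F}_p[G]$-module $\mathbb{F}_p[G]^m$ (resp.\ of $\mathbb{Q}[G]^m$). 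Theorem~\ref{freemodules}.2 then gives that $M/pM$ is $\mathbb{F}_p[G]$-free of some rank $r_p\le n$ and that $M\otimes\mathbb{Q}$ is $\mathbb{Q}[G]$-free of some rank $r_0\le n$.

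Next I show that the local ranks all coincide: $r_p=r_0=:r$ for every prime $p$. Let $M_G=M\otimes_R\mathbb{Z}$ be the abelian group of coinvariants. Applying $(-)_G$ to the free modules produced in Step~1 yields $\mathrm{rank}_{\mathbb{Z}}(M_G)=r_0$ and $\dim_{\mathbb{F}_p}(M_G/pM_G)=r_p$, so the claim is equivalent to $M_G$ being $\mathbb{Z}$-torsion-free. I expect to extract this from the $\mathrm{Tor}^R_*(-,\mathbb{Z})$ long exact sequence of $0\to M\to R^m\to N\to 0$: since $R^m$ is $R$-free, $H_i(G;R^m)=0$ for $i\ge 1$, so the only potential torsion in $M_G$ sits in the image of $H_1(G;N)\to M_G$, and the $\mathbb{Z}$-flatness of $N$ has to be leveraged here to kill (or control) this image.

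Finally I lift a basis. Fix a $\mathbb{Q}[G]$-basis of $M\otimes\mathbb{Q}$ and clear denominators to obtain $e_1,\ldots,e_r\in M$ whose images in $M\otimes\mathbb{Q}$ still form a $\mathbb{Q}[G]$-basis; set $M_0=\mathbb{Z}[G]\langle e_1,\ldots,e_r\rangle$, which is free of rank $r$ by $\mathbb{Q}[G]$-independence. The quotient $Q':=M/M_0$ satisfies $Q'\otimes\mathbb{Q}=0$, so each of its finitely many $\mathbb{Z}[G]$-generators is $\mathbb{Z}$-torsion and some integer $N_0$ annihilates all of $Q'$. Decomposing $Q'=\bigoplus_{p\mid N_0}Q'_{(p)}$ into $p$-primary parts, it suffices to show each $Q'_{(p)}=0$. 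For each such $p$, tensoring $0\to M_0\to M\to Q'\to 0$ with $\mathbb{F}_p$ over $\mathbb{Z}$ produces the four-term exact sequence of $\mathbb{F}_p[G]$-modules
\[
0\to Q'[p]\to M_0/pM_0\to M/pM\to Q'/pQ'\to 0
\]
whose middle terms are free of the same rank $r$ by the rank constancy. A careful choice of the $e_i$ (adjusting using the identification $M_G\cong\mathbb{Z}^r$ from Step~2, so that the $\bar e_i$ descend to an $\mathbb{F}_p[G]$-basis of $M/pM$) makes the middle map an isomorphism, forcing $Q'/pQ'=0$ and hence $pQ'_{(p)}=Q'_{(p)}$; combined with $p^kQ'_{(p)}=0$ (finite generation plus $p$-primary torsion) this iterates to $Q'_{(p)}=p^kQ'_{(p)}=0$. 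Therefore $Q'=0$ and $M=M_0$ is free.

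\smallskip
\noindent The main obstacle is the rank constancy (Step~2); once it is in hand, the basis-lifting of Step~3 is a matter of careful local-to-global bookkeeping and does not rely on any further input beyond Theorem~\ref{freemodules}.2.
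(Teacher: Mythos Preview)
Your outline follows Bass's local-to-global strategy, as does the paper, but you are missing a crucial ingredient: the paper uses not only Theorem~\ref{freemodules}.2 but also Theorem~\ref{gltheorem} ($\mathrm{GL}_m(\mathbb{K}[G]) = \mathrm{GE}_m(\mathbb{K}[G])$), and your attempt to replace the latter by rank constancy (Step~2) does not go through. From the Tor sequence you correctly get an exact sequence $0 \to H_1(G;N) \to M_G \to \mathbb{Z}^m$, so the torsion of $M_G$ is exactly that of $H_1(G;N)$; but $\mathbb{Z}$-flatness of $N$ provides no general mechanism for killing torsion in $H_1(G;N)$. The conclusion $M_G \cong \mathbb{Z}^r$ is of course true \emph{a posteriori} once $M$ is known to be free, but I see no direct route to it, and you yourself flag this as the main obstacle without resolving it.

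Step~3 conceals the same difficulty rather than being ``careful bookkeeping''. Even granting Step~2, you need $e_1,\ldots,e_r \in M$ whose reductions form an $\mathbb{F}_p[G]$-basis of $M/pM$; having their images give a $\mathbb{Z}$-basis of $M_G$ is not enough---already for $r=1$, $G=\mathbb{Z}$, $p$ odd, the element $1+t\in\mathbb{F}_p[t,t^{-1}]$ has unit augmentation but is not a unit. The paper's Proposition~\ref{bassprop} handles the analogous step differently and never asserts rank constancy. Starting from $kM\subset Y\subset M$ with $Y$ free of rank $m=r_0$ and $k$ minimal, it uses Theorem~\ref{freemodules}.2 to split $Y_p = \ker f \oplus \mathrm{im}\, f$ (where $f\colon Y_p\to M_p$) into free summands, then invokes $\mathrm{GL}_m(\mathbb{F}_p[G]) = \mathrm{GE}_m(\mathbb{F}_p[G])$ so that a basis of $Y_p$ adapted to this splitting is reached from the given one by an elementary matrix; this lifts to $\mathrm{E}_m(\mathbb{Z}[G])$ and produces a new free $Y'$ with $(k/p)M\subset Y'\subset M$, contradicting minimality of $k$. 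That elementary-matrix lifting is precisely the mechanism your Step~3 is missing.
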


\begin{remark}
Some assumption such as $(\star)$ is necessary to establish freeness: the ideal in $\mathbb{Z} [t, t^{- 1}]$ generated by $u = 2$ and $v = t - 1$ is not free, as it satisfies the relation $(t - 1) u - 2 v = 0$ and is not generated by a single element. 
\end{remark}

To ellucidate the role of $(\star)$ in the proof, we recall some basics on abelian groups. 
\subsubsection*{On abelian groups and torsion}
For an abelian group $A$, let $A_{\mathbb Q}:=A\otimes_{\mathbb Z}\mathbb Q$ be its {\it rationalization} and $A_p:=A\otimes_{\mathbb Z}\mathbb F_p$ its {\it mod $p$ reduction}. An inclusion of abelian groups induces homomorphisms of rationalizations and mod $p$ reductions. The latter may no longer be an inclusion. In our proof, condition ($\star$) will be relevant because it implies injectivity of the induced map on mod $p$ reductions via the last part of the following lemma which summarizes basic properties of rationalizations and mod $p$ reductions that we will need. 
\begin{lemma}
\label{abgroup}
For any abelian group $A$, 
\begin{itemize}
\item[1.]
the mod $p$ reduction can be expressed as $A_p=A\otimes_{\mathbb Z}\mathbb F_p\cong A/pA$.
\item[2.]
If $A$ is torsion-free, then the natural map $A\rightarrow A_{\mathbb Q}$ is an embedding. 
\end{itemize} 
Let $A\hookrightarrow B$ be an embedding of abelian groups. Then
\begin{itemize}
\item[3.]
the induced map of rationalizations $A_{\mathbb Q}\rightarrow B_{\mathbb Q}$ is injective, and
\item[4.]
if $B/A$ has no $p$-torsion then the induced map $A_p\rightarrow B_p$ is injective. 
\end{itemize}
\end{lemma}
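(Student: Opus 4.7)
All four statements are standard facts from the homological algebra of abelian groups, so the plan is to deduce each from the appropriate flatness or $\mathrm{Tor}$ calculation rather than to do anything clever.

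For part 1, the idea is to start from the presentation $\mathbb Z\xrightarrow{\cdot p}\mathbb Z\to\mathbb F_p\to 0$ of $\mathbb F_p$ as a $\mathbb Z$-module and tensor with $A$. Right-exactness of $-\otimes_{\mathbb Z}A$ gives the sequence $A\xrightarrow{\cdot p}A\to A\otimes_{\mathbb Z}\mathbb F_p\to 0$, so $A_p\cong A/pA$.

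For parts 2 and 3, the key input is that $\mathbb Q$ is a flat $\mathbb Z$-module (as a localization of $\mathbb Z$). For part 3, an injection $A\hookrightarrow B$ stays injective after tensoring with the flat module $\mathbb Q$, giving $A_{\mathbb Q}\hookrightarrow B_{\mathbb Q}$. For part 2, one can either identify $A\to A_{\mathbb Q}$ with the localization $A\to S^{-1}A$ at $S=\mathbb Z\setminus\{0\}$ (whose kernel is the $S$-torsion, i.e.\ the usual torsion subgroup of $A$) or, more concretely, observe that $a\otimes 1 = 0$ in $A\otimes_{\mathbb Z}\mathbb Q$ forces $na = 0$ for some non-zero integer $n$, which under the torsion-free hypothesis forces $a = 0$.

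Part 4 is the one where $(\star)$-type hypotheses really come in, and its proof uses the $\mathrm{Tor}$ long exact sequence applied to $0\to A\to B\to B/A\to 0$ with coefficients in $\mathbb F_p$:
\[
\mathrm{Tor}_1^{\mathbb Z}(B/A,\mathbb F_p)\to A_p\to B_p\to (B/A)_p\to 0.
\]
Using the resolution $0\to\mathbb Z\xrightarrow{\cdot p}\mathbb Z\to\mathbb F_p\to 0$, the first term is the $p$-torsion subgroup $(B/A)[p]$; the assumption that $B/A$ has no $p$-torsion kills it, so $A_p\to B_p$ is injective. The only even mildly subtle point is the identification $\mathrm{Tor}_1^{\mathbb Z}(C,\mathbb F_p)=C[p]$, which one reads off the same resolution by tensoring with $C$ and taking the kernel of the multiplication-by-$p$ map. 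None of these steps is an obstacle; the work is just a matter of invoking the correct piece of elementary homological algebra for each clause.
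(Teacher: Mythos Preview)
Your proof is correct and essentially the same as the paper's: both derive part 1 from right-exactness applied to $0\to\mathbb Z\xrightarrow{p}\mathbb Z\to\mathbb F_p\to 0$, and part 4 from the $\mathrm{Tor}$ long exact sequence together with the identification $\mathrm{Tor}_1^{\mathbb Z}(B/A,\mathbb F_p)=(B/A)[p]$. The only cosmetic difference is in parts 2 and 3: you invoke flatness of $\mathbb Q$ (equivalently, localization), whereas the paper phrases everything uniformly via $\mathrm{Tor}$, using the sequence $0\to\mathbb Z\to\mathbb Q\to\mathbb Q/\mathbb Z\to 0$ for part 2 and $\mathrm{Tor}(B/A,\mathbb Q)=0$ for part 3---but these are equivalent formulations of the same fact.
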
 
\begin{proof}
The failure of tensor products to preserve injectivity is measured by Tor as appying $A\otimes_{\mathbb Z}-$ to an exact sequence of abelian groups $0\rightarrow B\rightarrow C\rightarrow D\rightarrow 0$ gives the exact sequence
$$
\mathrm{Tor}(A,D)\rightarrow A\otimes_{\mathbb Z}B\rightarrow A\otimes_{\mathbb Z}C\rightarrow A\otimes_{\mathbb Z}D\rightarrow 0.
$$ 
The basic properties of Tor we will use can be found in 3A.5 of \cite{hatcherbook}.
For the first point, apply $A\otimes_{\mathbb Z}-$ to the exact sequence $0\rightarrow\mathbb Z\stackrel p\rightarrow\mathbb Z\rightarrow\mathbb F_p\rightarrow 0$. For the second, apply it to the exact sequence $0\rightarrow\mathbb Z\rightarrow\mathbb Q\rightarrow\mathbb Q/\mathbb Z\rightarrow 0$ and note that the tor term $\mathrm{Tor}(A,\mathbb Q/\mathbb Z)$ vanishes because $A$ is torsion-free. For the third point, apply $-\otimes_{\mathbb Z}\mathbb Q$ to $0\rightarrow A\rightarrow B\rightarrow B/A\rightarrow 0$ and note that $\mathrm{Tor}(B/A,\mathbb Q)$ vanishes beecause $\mathbb Q$ is torsion-free. For the forth point, apply $-\otimes_{\mathbb Z}\mathbb F_p$ to the same sequence and note that the tor term $\mathrm{Tor}(B/A,\mathbb F_p)=\ker(B/A\stackrel{p}\rightarrow B/A)$ vanishes because $B/A$ has no $p$-torsion. 
\end{proof}

\subsubsection*{A `local-to-global' principle for rings} We can now prove Theorem \ref{freezmodules}. Note that $\mathbb Z[G]$ is torsion-free as an abelian group, so any submodule of $\oplus\mathbb Z[G]$ is, as well. Moreover, the rank of a finitely generated free $\mathbb K[G]$-module is the dimension of the vector space of coinvariants $\mathbb K[G]^m\otimes_{\mathbb K[G]}\mathbb K=\mathbb K^m$. So, Theorem \ref{freezmodules} is a consequence of Theorem \ref{freemodules}.2, Theorem \ref{gltheorem} and the $R=\mathbb Z[G]$ case of the following purely ring theoretic proposition.  

\begin{proposition}
\label{bassprop}
Suppose $R$ is a ring satisfying the following properties: 
\begin{itemize}
\item[$(\mathbb{Q})$] 
every $n$-generated $R_{\mathbb Q}$-submodule of $\oplus R_{\mathbb Q}$ is free of unique rank\footnote{A finitely generated $R$-module is free of unique rank if it is isomorphic to $R^m$ for a unique $m$.},
\end{itemize}
and for all primes $p$
\begin{itemize}
\item[$(p_0)$]
every $n$-generated $R_p$-submodule of $\oplus R_p$ is free of unique rank, and
\item[$(p_1)$] 
for $m\leqslant n-1$ we have $\mathrm{GL}_m(R_p)=\mathrm{GE}_m(R_p)$. 
\end{itemize}
If $M$ is an $n$-generated $R$-submodule of $\oplus R$ such that both $M$ and $(\oplus R)/M$ are torsion-free abelian groups, then $M$ is a free $R$-module. 
\end{proposition}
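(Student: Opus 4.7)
The plan is to induct on $n$, adapting Bass's local-to-global method. The base case $n=0$ is trivial. For the inductive step, choose a generating set $\xi_1,\dots,\xi_n$ of $M$ with associated surjection $\phi\colon R^n\twoheadrightarrow M$ and kernel $K$. Tensoring the sequence $0\to K\to R^n\to M\to 0$ with $\mathbb Q$ (always exact) and with each $\mathbb F_p$ (left-exact because $M$ is torsion-free, by Lemma \ref{abgroup}.4), and using that $(\star)$ together with the same lemma supplies the inclusion $M_p\hookrightarrow(\oplus R)_p$, I obtain short exact sequences
$$
0\to K_{\mathbb Q}\to R_{\mathbb Q}^n\to M_{\mathbb Q}\to 0,\qquad 0\to K_p\to R_p^n\to M_p\to 0,
$$
with $M_{\mathbb Q}$ and $M_p$ sitting as $n$-generated submodules of $(\oplus R)_{\mathbb Q}$ and $(\oplus R)_p$. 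Hypotheses $(\mathbb Q)$ and $(p_0)$ then identify these as free of unique ranks $m\leq n$ and $m_p\leq n$, whence $K_{\mathbb Q}$ and $K_p$ are free of ranks $n-m$ and $n-m_p$.

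The easy case is $m=n$: the $\xi_i$'s are $R_{\mathbb Q}$-linearly independent, and since $R=\mathbb Z[G]$ (the motivating case) is torsion-free as an abelian group, the inclusion $R^n\hookrightarrow R_{\mathbb Q}^n$ forces $K=0$, and hence $M\cong R^n$ is free.

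The heart of the argument is the case $m<n$. The plan is to replace the generating set with an $(n-1)$-element one still generating $M$---which still satisfies $(\star)$---and then finish by induction. Hypotheses $(\mathbb Q)$, $(p_0)$, and $(p_1)$ are precisely what is needed to rerun the Cohn--Hog-Angeloni argument of Lemma \ref{elementary}.0 over $R_{\mathbb Q}$ (via $(\mathbb Q)$) and over each $R_p$ (via $(p_0)$, with $(p_1)$ providing $\mathrm{GL}_{n-1}(R_p)=\mathrm{GE}_{n-1}(R_p)$ for the requisite basis changes). Working separately over each localization, this produces matrices $U_{\mathbb Q}\in \mathrm{GL}_n(R_{\mathbb Q})$ and $U_p\in \mathrm{GE}_n(R_p)$ that kill one entry of the tuple $(\xi_1,\dots,\xi_n)U$ over the respective localization.

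The hard part will be assembling these local transformations into a single $U\in \mathrm{GL}_n(R)$ producing a reduced generating set over $R$. My plan is to glue via the arithmetic fibre square $R\hookrightarrow R_{\mathbb Q}\times\prod_p R_p$, where condition $(\star)$ is essential in ensuring that the relevant Tor obstructions vanish so that the local matrices lift coherently to $R$. Given such a global $U$, the first $n-1$ entries of $(\xi_1,\dots,\xi_n)U$ form the desired $(n-1)$-element generating set of $M$, and induction closes the proof.
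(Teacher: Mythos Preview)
Your outline diverges from the paper's proof, and the divergence is precisely at the step you flag as ``the hard part''. The paper does \emph{not} induct on $n$ by reducing the number of generators; instead it argues as follows. From hypothesis $(\mathbb Q)$ it extracts a free $R$-submodule $Y\subset M$ of rank $m=\mathrm{rank}\,M_{\mathbb Q}$ together with an integer $k\geq 1$ such that $kM\subset Y\subset M$ (clear denominators in a rational basis). One then minimizes $k$. If $k>1$, pick a prime $p\mid k$; using $(p_0)$ to see that $M_p$ and the image of $Y_p\to M_p$ are free, one splits $Y_p\cong \ker f\oplus\mathrm{im}\,f$, and hypothesis $(p_1)$ (applied at size $m\leq n-1$) lifts a compatible basis of $Y_p$ to an $R$-basis $y'_1,\dots,y'_m$ of $Y$ with $Y=P\oplus Q$, $P_p\to M_p$ zero and $Q_p\to M_p$ injective. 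A direct computation then gives $\tfrac{k}{p}M\subset \tfrac{1}{p}P\oplus Q\subset M$, contradicting minimality of $k$. So only \emph{one prime at a time} is ever needed, and no global gluing occurs.

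Your plan, by contrast, asks for a single $U\in\mathrm{GL}_n(R)$ assembled from $U_{\mathbb Q}$ and infinitely many $U_p$. This is exactly the kind of local-to-global statement for $\mathrm{GL}_n$ that fails without further input: there is no ``arithmetic fibre square'' identifying $\mathrm{GL}_n(R)$ with compatible families in $\mathrm{GL}_n(R_{\mathbb Q})\times\prod_p\mathrm{GL}_n(R_p)$, and indeed the paper remarks just after the proof that the analogous local-to-global principle for $\mathrm{GL}=\mathrm{GE}$ is \emph{false} already for $R=\mathbb Z[t]$. So the gluing step is a genuine gap, not a routine verification. Moreover, your invocation of Lemma~\ref{elementary}.0 is misplaced: that lemma is proved using Theorem~\ref{intromaintheorem}, which requires the hyperbolic action; hypotheses $(\mathbb Q)$ and $(p_0)$ give freeness of submodules but no algorithm for eliminating a generator over $R_{\mathbb Q}$ or $R_p$. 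Finally, for the $m=n$ case you should argue via $M\hookrightarrow M_{\mathbb Q}$ (using that $M$ is torsion-free, which is a hypothesis) rather than via $R^n\hookrightarrow R_{\mathbb Q}^n$, since the proposition does not assume $R$ itself is torsion-free.
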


\begin{proof}
Since $M$ is torsion-free, the map $M\rightarrow M_{\mathbb Q}$ is an embedding. So, we can think of $M$ as a subgroup of $M_{\mathbb Q}$. 
Let $x_1, \dots, x_n$ generate the $R$-module $M$. By Lemma \ref{abgroup}.3, $M_{\mathbb{Q}}$ is an $R_{\mathbb Q}$-submodule of the free $R_{\mathbb Q}$-module $\oplus R_{\mathbb Q}$. So, by our ($\mathbb{Q}$) hypothesis, it is a free $R_{\mathbb Q}$-module. If this $R_{\mathbb Q}$-module is of rank $n$ (not $\leqslant n - 1$), then the family $x_1, \ldots, x_n$ is an $R_{\mathbb Q}$-basis for $M_{\mathbb Q}$. Therefore, it is an $R$-basis for $M$, and hence $M$ is free of rank $n$, and we are done. 

Otherwise, $M_{\mathbb{Q}}$ is of rank $m < n$. Let $y_1,\ldots, y_m$ be an $R_{\mathbb Q}$-basis for $M_{\mathbb Q}$. Clearing denominators, we may assume that each $y_i$ is in $M$. Then, the $y_i$ generate a free, rank $m$ $R$-submodule $Y:=\left<y_1,\dots,y_m\right>$ of $M$. Next, since the $y_i$ form an $R_{\mathbb Q}$-basis for $M$, each $x_i$ can expressed as a $R_{\mathbb Q}$-linear combination of the $y_i$. We can clear denominators in these expressions and find a single positive integer $k$ such that---for all $j$---$kx_j$ is a $R$-linear combination of the $y_i$. In summary we have obtained a free $R$-module $Y$, a positive number $k$, and inclusions
$$
kM\subset Y\subset M.
$$

Let $k\geq 1$ be the smallest number such that there is a free $R$-module $Y$ of rank $m$ with $kM\subset Y\subset M$. If $k=1$, then we are done since then $M=Y$ is a free $R$-module. So, towards a contradiction, suppose $k>1$. We will find a free, rank $m$ $R$-module $Y'$ and $1\leq k'<k$ such that $k'M\subset Y'\subset M$.  

To that end, pick a prime $p$ dividing $k$ and let $f:Y_p\rightarrow M_p$ be the mod $p$ reduction of the second inclusion above. By Lemma \ref{abgroup}.4 and the hypothesis that $(\oplus R)/M$ is torsion-free as an abelian group, the $R_p$-module $M_p$ embeds in the free $R_p$-module $\oplus R_p$. Since $M_p$ is generated by $n$-elements, our hypothesis ($p_0$) shows that it is free. The $R_p$-module $f (Y_p)$ is an $R_p$-submodule of $M_p$, and is generated by $m<n$ elements, so it is again free by $(p_0)$. Thus, we have a splitting
$$
Y_p \cong\ker(f) \oplus\mathrm{im} (f) .
$$
Note that the left hand side is also a free $R_p$-module, since it the mod $p$ reduction of the free $R$-module $Y$. The splitting shows that $\ker (f)$ is an $m$-generated $R_p$-submodule of a free $R_p$-module, so it is also free. Therefore, we can pick an $R_p$-basis $z_1,\dots,z_k,z_{k+1},\dots,z_m$ for the $R_p$-module $Y_p$ so that the first $k$ elements are a basis for the kernel of $f$. 
By our ($p_1$) hypothesis, there exists a matrix $U$ in $\mathrm{E}_m  (R_p)$ transforming the mod $p$ reduction of the family $\{y_i\}$ to the family $\{u_i z_i\}$ where the $u_i$ are units in $R_p$. As a matrix in $\mathrm{E}_m (R_p)$ is a product of elementary matrices, we can lift $U$ to $\mathrm{E}_m  (R)$ transforming the family $\{y_i\}$ to a family $\{y'_i\}$ such that the reduction mod $p$ of $y'_i$ is $u_i z_i$. Let $P:=\left<y_1',\dots,y_k'\right>$ and $Q:=\left<y_{k+1}',\dots,y_m'\right>$, so that 
$$
Y=P\oplus Q,
$$
and on mod $p$ reductions
\begin{itemize}
\item
$P_p\rightarrow M_p$ is the zero map, i.e. $P\subset pM$, while
\item
$Q_p\rightarrow M_p$ is injective, i.e. $pQ=Q\cap pM$.
\end{itemize}
For $x\in M$, the inclusion $kM\subset Y=P\oplus Q$ lets us write $kx=a+b$ where $a\in P$ and $b\in Q$. Since $P\subset pM$, we have\footnote{For an element $a$ in a torsion-free abelian group, if there is an element $a_0$ such that $a=pa_0$, then there is a unique such element. We call this element ${a\over p}$.} ${a\over p}\in {1\over p}P\subset M$ and hence $b=p({k\over p}x-{a\over p})\in Q\cap pM=pQ$. Therefore, ${b\over p}\in Q$. Since ${k\over p}x={a\over p}+{b\over p}$ we have obtained the inclusions
$$
{k\over p}M\subset {1\over p}P\oplus Q\subset M.
$$
Since ${1\over p}P\oplus Q$ is a free $R$-module of rank $m$ (with basis $\{{y_1'\over p},\dots,{y_k'\over p},y'_{k+1},\dots,y_m'\}$) we arrive at a contradiction to the minimality of $k$.
\end{proof}

\begin{remark}
One may ask whether there is a local-to-global argument taking as input $\mathrm{GL}_m(R_{\mathbb K})=\mathrm{GE}_m(R_{\mathbb K})$ for all $m\leq n$ and all fields $\mathbb K$ that leads to $\mathrm{GL}_n(R)=\mathrm{GE}_n(R)$. This is not the case. Indeed, the polynomial ring $R=\mathbb Z[t]$ satisfies the hypothesis for all $n$ and all $\mathbb K$, but the matrix
$$
\left(
\begin{array}{cc}
4&1+2t\\
1-2t&-t^2
\end{array}
\right)\in \mathrm{GL}_2(\mathbb Z[t])
$$
is not in $\mathrm{GE}_2(\mathbb Z[t])$ (\cite{cohngl}, p.30). In fact, $\mathrm{GL}_2(\mathbb Z[t])/\mathrm{GE}_2(\mathbb Z[t])$ is quite large (\cite{krsticmccool}). However, the above example goes away if we invert $t$, and \cite{abramenko} conjectures that 
$\mathrm{GL}_2(\mathbb Z[t,t^{-1}])=\mathrm{GE}_2(\mathbb Z[t,t^{-1}])$.
\end{remark}

\subsection{Chain complexes, cell decompositions, and Morse theory}
Recall that the cohomological dimension of the group $G$, denoted $\mathrm{cd}(G)$, is the minimal length of a free $\mathbb Z[G]$-resolution of $\mathbb Z$ (see \cite{brownbook}).   

\begin{theorem}
\label{cdim}
Assume that the group $G$ satisfies $\mathcal{H}_{n}$. 
\begin{enumerate}
\item[1.]
For every free $\mathbb Z[G]$-resolution $C_*\rightarrow\mathbb Z$ and each $0<k<\mathrm{cd}(G)$ we have
$$
\mathrm{rank}_{\mathbb Z[G]}(C_k)>n.
$$
\item[2.](Theorem \ref{cellbound} in the introduction)
Every aspherical cell complex with fundamental group $G$ has more than $n$ cells of each dimension $0 < k < \mathrm{cd} (G)$.
\end{enumerate}
\end{theorem}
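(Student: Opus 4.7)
The plan is to reduce Part 1 to a single application of Theorem \ref{freezmodules}, by recognising the image of the $k$-th boundary map as an $n$-generated submodule with the required torsion-freeness property, and then to derive Part 2 as an immediate corollary via the universal cover.

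For Part 1, I would argue by contradiction. Assume that $\mathrm{rank}_{\mathbb Z[G]}(C_k) \leq n$ for some $0 < k < \mathrm{cd}(G)$, and set $Z_{k-1} := \ker(\partial_{k-1}) = \mathrm{im}(\partial_k)$. The first observation is that $Z_{k-1}$ is generated by the (at most $n$) images of a $\mathbb Z[G]$-basis of $C_k$, so it is an $n$-generated $\mathbb Z[G]$-module. The second is that $Z_{k-1}$ embeds into the free $\mathbb Z[G]$-module $C_{k-1}$, and the quotient $C_{k-1}/Z_{k-1} \cong \mathrm{im}(\partial_{k-1})$ is itself a submodule of a free module (of $C_{k-2}$ when $k \geq 2$, or of $\mathbb Z$ when $k = 1$), hence torsion-free as an abelian group. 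These are exactly the hypotheses of Theorem \ref{freezmodules}, which then gives that $Z_{k-1}$ is free as a $\mathbb Z[G]$-module. Truncating the original resolution produces a free $\mathbb Z[G]$-resolution
$$0 \to Z_{k-1} \to C_{k-1} \to \dots \to C_0 \to \mathbb Z \to 0$$
of length $k$, forcing $\mathrm{cd}(G) \leq k$ and contradicting $k < \mathrm{cd}(G)$.

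For Part 2, let $X$ be an aspherical cell complex with fundamental group $G$. Its universal cover $\widetilde X$ is contractible, so the cellular chain complex $C_*(\widetilde X;\mathbb Z)$ is a free $\mathbb Z[G]$-resolution of $\mathbb Z$ in which $\mathrm{rank}_{\mathbb Z[G]} C_k(\widetilde X;\mathbb Z)$ equals the number of $k$-cells of $X$. Applying Part 1 yields the claimed cell-count lower bound.

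I do not anticipate a serious obstacle: the theorem is essentially a repackaging of Theorem \ref{freezmodules} in the language of cohomological dimension. The only points that require care are checking that the quotient $C_{k-1}/Z_{k-1}$ is torsion-free as an abelian group at every relevant degree (including the extremal case $k=1$, where the quotient is $\mathbb Z$), and verifying that once $Z_{k-1}$ has been shown to be free, the truncated complex is genuinely a free resolution of $\mathbb Z$ of length $k$, so that its existence indeed contradicts $\mathrm{cd}(G)>k$.
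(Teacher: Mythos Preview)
Your proposal is correct and follows essentially the same argument as the paper: both identify $\mathrm{im}(\partial_k)=\ker(\partial_{k-1})$ as an $n$-generated submodule of $C_{k-1}$ with torsion-free quotient (handling $k=1$ via the augmentation), invoke Theorem \ref{freezmodules} to conclude it is free, and truncate to a length-$k$ free resolution contradicting $k<\mathrm{cd}(G)$; Part 2 then follows from the cellular chain complex of the universal cover exactly as you indicate.
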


\begin{proof}
First, we prove the algebraic part. 
Suppose rank$(C_k)\leqslant n$. Then the module of boundaries, $B_{k-1}:= \partial (C_k)\subset C_{k - 1}$ is an $n$-generated submodule of a free module. 
Since $C_{\ast}$ is a resolution, $\ker \left( C_{k - 1}
  \overset{\partial}{\rightarrow} C_{k - 2} \right) = \partial (C_k)$, so $C_{k - 1} / \partial (C_k)$ injects into $C_{k - 2}$, which is either a free module (if $k \geq 2$) or $\mathbb{Z}$ (if $k = 1$). In either case, $C_{k - 1} / \partial (C_k)$ is torsion-free as an abelian group. Hence, $B_{k-1}$ satisfies $(\star)$ and we conclude from Theorem \ref{freezmodules} that it is free. So, we obtain a new free resolution 
$$
0 \rightarrow B_{k-1} \rightarrow C_{k - 1}\rightarrow \ldots \rightarrow C_0 \rightarrow \mathbb{Z} \rightarrow 0 
$$
of length $k$. Thus $\mathrm{cd} (G) \leq k$. This proves 1.

Now, suppose $X$ is an aspherical cell complex with fundamental group $G$. Let $C_* (\widetilde{X} ; \mathbb{Z})$ be the cellular chain complex of the universal cover of $X$. The augmented complex $C_*(\widetilde X;\mathbb Z)\rightarrow \mathbb{Z}$ is a free $\mathbb{Z} [G]$ resolution of $\mathbb{Z}$ (\cite{brownbook}, Prop I. 4.1). Applying part 1 to this resolution gives part 2.
\end{proof}

\subsubsection*{Essential maps}
If $\mathbb{K}$ is a field, the same result is true with $\mathbb Z$ replaced by $\mathbb K$ and cohomological dimension by $\mathbb K$-cohomological dimension ($=$ minimal length of a free $\mathbb K[G]$-resolution of $\mathbb K$), and easier to prove as we don't need Theorem \ref{freezmodules}, but only Theorem \ref{freemodules}.2. In fact, in the setting of $\mathbb K$-cohomological dimension we have the following more general result suggested by a question of Gromov. 

Let $G$ be a classifying space for the group $G$. For a cell complex $X$, we say that a cellular map $X\rightarrow BG$ is {\it $d$-essential (with local coefficients)} if there is a $\mathbb K[G]$-module $V$ such that the induced map $H^{d}(BG;V)\rightarrow H^{d}(X;V)$ is non-zero. For the sake of brevity, we will omit ``with local coefficients'' from now on. We will call a $d$-manifold $X$ {\it essential} if the map $X\rightarrow B\pi_1(X)$ is $d$-essential.
\begin{example}
Any map of non-zero degree from a closed manifold to a closed aspherical manifold of dimension $d$ is $d$-essential. 
\end{example}
\begin{theorem}
\label{essential}
Assume that the group $G$ satisfies $\mathcal H_{n}$. If $X\rightarrow BG$ is a $d$-essential map, then $X$ has more than $n$ cells in each dimension $0<k<d$. 
\end{theorem}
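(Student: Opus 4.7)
\medskip
\noindent\emph{Proof Plan.} The plan is to argue by contradiction, supposing $X$ has at most $n$ cells in some dimension $k$ with $0<k<d$, and showing this forces $\mathrm{cd}_{\mathbb{K}}(G)\leq k$, contradicting the $d$-essential hypothesis (which implies $\mathrm{cd}_{\mathbb{K}}(G)\geq d$, since a non-zero class in $H^d(BG;V)$ requires $\mathrm{Ext}^d_{\mathbb{K}[G]}(\mathbb{K},V)\neq 0$). The overall strategy mirrors the $\mathbb{K}$-coefficient version of Theorem \ref{cdim}, with the $d$-essential hypothesis playing the role of the asphericity of $X$.

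First I would reduce to $\dim X\leq d$ by replacing $X$ with its $d$-skeleton $X^{(d)}$. The cochain complex of the pair $(X,X^{(d)})$ is concentrated in degrees $>d$, so $H^d(X,X^{(d)};V)=0$ and the restriction $H^d(X;V)\hookrightarrow H^d(X^{(d)};V)$ is injective; hence $X^{(d)}\to BG$ is still $d$-essential, and has the same number of cells in every dimension $\leq d$ as $X$.

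Now let $\bar X$ be the $G$-cover of $X$ pulled back from $EG\to BG$, and let $C_*:=C_*(\bar X;\mathbb{K})$, a complex of free $\mathbb{K}[G]$-modules (of rank equal to the number of cells of $X$ in each degree), concentrated in degrees $0,\ldots,d$, with $\mathrm{rank}(C_k)\leq n$. Since $B_{k-1}=\partial(C_k)\subset C_{k-1}$ is an $n$-generated submodule of a free $\mathbb{K}[G]$-module, Theorem \ref{freemodules}.2 says $B_{k-1}$ is free. Because $B_{k-1}$ is then projective, the short exact sequence $0\to Z_k\to C_k\to B_{k-1}\to 0$ splits, so $Z_k=\ker\partial_k$ is an $n$-generated summand of $C_k$, hence itself free by Theorem \ref{freemodules}.2.

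The heart of the argument is then to build from this data a free $\mathbb{K}[G]$-resolution of $\mathbb{K}$ of length $\leq k$: once such a resolution is in hand, Theorem \ref{cdim}.1 (in the $\mathbb{K}$-coefficient form noted in the text) gives $\mathrm{cd}_{\mathbb{K}}(G)\leq k<d$, the desired contradiction. The obvious candidate is the length-$k$ complex
\[
0\to B_{k-1}\to C_{k-1}\to \cdots\to C_0\to \mathbb{K}\to 0,
\]
all of whose terms are free; it is a resolution provided $H_j(\bar X;\mathbb{K})=0$ for $0<j<k$. This vanishing holds for instance when $k=1$ (giving, together with Stallings's theorem as in the proof of Corollary \ref{stallings}, a quick treatment of that case), or more generally when the map $f$ factors through a $(k{-}1)$-connected space; in those cases the argument is immediate.

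The main obstacle is the general case, where the intermediate homologies $H_j(\bar X;\mathbb{K})$ need not vanish. To treat it I would use the chain map $\phi:C_*(\bar X;\mathbb{K})\to F_*$ to a free resolution $F_*\to \mathbb{K}$ induced by $f$, together with the $d$-essential hypothesis, to ``correct'' the above complex into a genuine resolution of $\mathbb{K}$: iteratively replace each $C_j$ (for $j<k$) with a suitable direct summand or free extension chosen so that the resulting syzygies at each step are $n$-generated submodules of free modules, to which Theorem \ref{freemodules}.2 again applies, thereby guaranteeing freeness throughout while keeping the $k$-th term of rank $\leq n$. The delicate point is to ensure the correction terms can be absorbed in degrees $<k$ without enlarging the $k$-th term beyond $n$ generators; this is where the $d$-essentiality of $f$ and the freeness provided by $\mathcal{H}_{n,\delta}$ must be used in tandem.
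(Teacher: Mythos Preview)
Your reduction to $\dim X\leq d$ and the observation that $B_{k-1}=\partial C_k$ is free are both correct, but the core of your plan has a genuine gap. You aim to deduce $\mathrm{cd}_{\mathbb K}(G)\leq k$; as you yourself note, the truncated complex $0\to B_{k-1}\to C_{k-1}\to\cdots\to C_0\to\mathbb K\to 0$ is only a resolution when $H_j(\bar X;\mathbb K)=0$ for $0<j<k$, and the $G$-cover $\bar X$ need not be $(k{-}1)$-connected. Your proposed fix---``iteratively replace each $C_j$ with a suitable direct summand or free extension''---is not a proof: it gives no mechanism for killing $H_j(\bar X;\mathbb K)$ while keeping the degree-$k$ term $n$-generated, and in fact no such mechanism exists, because the conclusion $\mathrm{cd}_{\mathbb K}(G)\leq k$ simply does not follow from the data $C_*(\bar X)$ alone (a non-aspherical $X$ with few $k$-cells tells you nothing about $G$). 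The chain map $\phi$ to a resolution of $\mathbb K$ must be used in an essential way, and your sketch does not say how.

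The paper's argument avoids this difficulty by proving something weaker than $\mathrm{cd}_{\mathbb K}(G)\leq k$: it shows directly that the map $H^d(BG;V)\to H^d(X;V)$ vanishes. The point is that one does not need to turn $C_*(\bar X)$ into a resolution; one only needs to factor the chain map $f_*:C_*(\bar X;\mathbb K)\to C_*(EG;\mathbb K)$ through a chain complex $T_*$ that is zero in degrees $>k$. Concretely, take $T_j=C_j(\bar X)$ for $j<k$, $T_k=\partial C_k(\bar X)$, and $T_j=0$ for $j>k$. The map $C_*(\bar X)\to T_*$ is the identity in degrees $<k$ and $\partial$ in degree $k$. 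For $T_*\to C_*(EG)$, use $f_*$ in degrees $<k$; in degree $k$, observe that the image of $f_*\circ\partial:C_k(\bar X)\to C_{k-1}(EG)$ is an $n$-generated submodule of a free $\mathbb K[G]$-module, hence free by Theorem~\ref{freemodules}.2, and therefore (being projective and contained in $\partial C_k(EG)$) lifts to a map $f':\partial C_k(\bar X)\to C_k(EG)$ with $\partial f'=f_*\circ i$. The factorization $C_*(\bar X)\to T_*\to C_*(EG)$ then forces $H^d(BG;V)\to H^d(X;V)$ to factor through $H^d(T_*;V)=0$, contradicting $d$-essentiality. This is the idea you are missing.
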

\begin{proof}
Suppose not. Let $\hat X$ be the $G$-cover of $X$ induced by the map $f:X\rightarrow BG$. Then $\partial C_k(\hat X;\mathbb K)$ is an $n$-generated submodule of a free module, so it is a free module by Theorem \ref{freemodules}.2. Next, let $\iota$ denote the inclusion $\partial C_k(\hat X;\mathbb K)\hookrightarrow C_{k-1}(\hat X;\mathbb K)$. The image of $f_{k-1}\circ\iota:\partial C_k(\hat X;\mathbb K)\rightarrow C_{k-1}(EG;\mathbb K)$ is in the image of $\partial:C_k(EG;\mathbb K)\rightarrow C_{k-1}(EG;\mathbb K)$ since $f_*$ is a chain map $(f_{k-1}\partial=\partial f_k)$, so by lifting a free basis of $\partial C_k(\hat X;\mathbb K)$, we can construct a map $\lambda$ which makes the following diagram commute
$$
\begin{array}{cccccccc}
\dots\rightarrow & C_{k+1}(\hat X;\mathbb K)&\rightarrow &C_k(\hat X;\mathbb K)&\stackrel{\partial}\rightarrow &C_{k-1}(\hat X;\mathbb K)&\rightarrow\dots\rightarrow &C_0(\hat X;\mathbb K)\\
&\downarrow&&\downarrow\partial&&||\hspace{0.5cm}&&||\\
\dots\rightarrow & 0&\rightarrow &\partial C_k(\hat X;\mathbb K)&\stackrel{\iota}\hookrightarrow &C_{k-1}(\hat X;\mathbb K)&\rightarrow\dots\rightarrow &C_0(\hat X;\mathbb K)\\
&\downarrow&&\downarrow \lambda&&\downarrow f_{k-1}&&\downarrow\\
\dots\rightarrow & C_{k+1}(EG;\mathbb K)&\rightarrow &C_k(EG;\mathbb K)&\stackrel{\partial}\rightarrow &C_{k-1}(EG;\mathbb K)&\rightarrow\dots\rightarrow &C_0(EG;\mathbb K).
\end{array}
$$
So, we have constructed a chain map $f'_*:C_*(\hat X;\mathbb K)\rightarrow C_*(EG;\mathbb K)$ which agrees with $f$ on $C_{<k}$ and is zero on $C_{>k}$, In particular, since $d>k$, the chain map $f'_*$ induces the zero map on $H^d(-;V)$. 

On the other hand, we claim that $f'_*$ is chain homotopic to $f_*$.
The chain homotopy $h_*:C_{*}(\hat X;\mathbb K)\rightarrow C_{*+1}(EG;\mathbb K)$ is constructed as follows. 

For every $i<k$, we set $h_i=0$. 

In order to define $h_k$ we remark that the image of $f_k-f_k'$ is in the kernel of $\partial$ since $f_j=f'_j$  for $j<k$. Since $EG$ is contractible, this implies that $f_k-f_k'$ maps to the image of $\partial$. As $C_k(\hat X;
\mathbb K)$ is a free module, $f_k-f_k'$ lifts to a map $h_k:C_k(\hat X;\mathbb K)\rightarrow C_{k+1}(EG;\mathbb K)$ such that $\partial h_k=f_k-f_k'$. 

In order to define $h_{k+1}$, we observe that $\partial(f_{k+1}-h_k\partial)=0$. Indeed, 
\begin{eqnarray*}
\partial(f_{k+1}-h_k\partial)&=&\partial f_{k+1}-\partial h_k\partial \\
&=&f_k\partial-(f_k-f_k')\partial\\
&=&f_k'\partial\\
&=&\partial f'_{k+1}=0,
\end{eqnarray*}
where we have used the fact that $f_*$ and $f'_*$ are chain maps, and that $f'_*$ is zero on $C_{>k}$. Therefore, since $EG$ is contractible, $f_{k+1}-h_k\partial$ maps to the image of $\partial$. So, it has a lift $h_{k+1}:C_{k+1}(\hat X;\mathbb K)\rightarrow C_{k+2}(EG;\mathbb K)$. By construction $f_{k+1}=h_k\partial+\partial h_{k+1}$. 

In higher degrees we argue by induction, assume that for $i>k$ the maps $h_{\leq i}$ have been defined, and compute
\begin{eqnarray*}
\partial(f_{i+1}-h_i\partial)&=&\partial f_{i+1}-\partial h_i\partial\\
&=&f_i\partial-(f_i-h_{i-1}\partial)\partial\\
&=&0.
\end{eqnarray*}
So we can define $h_{i+1}$ as a lift of $f_{i+1}-h_i\partial$. Proceeding in this way, we obtain $h_*$ satisfying $f_i-f'_i=\partial h_i+h_{i-1}\partial$ in for all $i$. Therefore $f_*$ is chain homotopic to $f'_*$.

We conclude that $f_*$ induces the zero map on $H^d(-;V),$ contradicting the hypothesis that $f:X\rightarrow BG$ is $d$-essential. 
\end{proof}

Let us spell out a consequence of a special case of Theorem \ref{essential}, where we suppose that $X^d$ is a closed $d$-manifold.
Recall that a Morse function gives a cellular decomposition of a manifold with one cell of dimension $k$ for each point of index $k$ (\cite{thom}, or \cite{milnor} for a detailed proof). Therefore we deduce
{
\renewcommand{\thetheorem}{\ref{manifold}}
\begin{theorem}
Let $X^d$ be a $d$-manifold, $G$ a group that satisfies $\mathcal H_{n}$ and $BG$ its classifying space. If there is a continuous map $f:X^d\rightarrow BG$ with $f_*[X]\not=0$ in $H_d(BG;\mathbb K)$ then for each $0<k<d$, a Morse function on $X^d$ has at least $n+1$ critical points of index $k$.
\end{theorem}
\addtocounter{theorem}{-1}
}
\begin{remark}
Since the fundamental group of a hyperbolic manifold is not free, the case of critical points of index $1$ or $d-1$ follows from the aforementioned theorem of Arzhantseva, Gromov, Kapovich-Weidmann (\cite{arzhantseva,gromovexpanders,kapovichweidmann}). For hyperbolic manifolds of dimension three, a much better bound is given by \cite{bachman}.
\end{remark}

\subsection{Dimensions, few-relator groups, and $2$-complexes}
Concerning few-relator groups, we get the following.

{
\renewcommand{\thetheorem}{\ref{fewrelator}}
\begin{theorem}
An $n$-relator group satisfying $\mathcal{H}_{n}$ has cohomological dimension $\leq 2$.
\end{theorem}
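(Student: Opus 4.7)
The plan is to mimic the proof of Theorem \ref{cdim}.1 (in the case $k=2$), applied to the free resolution arising from a presentation $2$-complex with $n$ relators. Let $X$ be a presentation $2$-complex for $G$ with $n$ two-cells, and let $\tilde X$ be its universal cover. The augmented cellular chain complex
$$
C_2 \xrightarrow{\partial_2} C_1 \xrightarrow{\partial_1} C_0 \xrightarrow{\epsilon} \mathbb Z \to 0
$$
is exact (through $C_0$) and its modules are all finitely generated and free as $\mathbb Z[G]$-modules; in particular $C_2 \cong \mathbb Z[G]^n$.

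First I would look at the submodule $B_1 := \mathrm{im}(\partial_2) \subset C_1$. Since $B_1$ is a quotient of $C_2 \cong \mathbb Z[G]^n$, it is an $n$-generated submodule of the free module $C_1$. Next, I would verify the torsion-freeness hypothesis $(\star)$ for $B_1$: because $\tilde X$ is simply connected, $H_1(\tilde X;\mathbb Z)=0$, so $\ker(\partial_1) = B_1$, and the induced map $C_1/B_1 \hookrightarrow C_0$ is an inclusion into $\mathbb Z[G]$. As $\mathbb Z[G]$ is torsion-free as an abelian group, so is $C_1/B_1$, and $(\star)$ holds for $B_1 \subset C_1$.

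With $(\star)$ in hand, Theorem \ref{freezmodules} applies directly (using hypothesis $\mathcal H_{n,\delta}$) to conclude that $B_1$ is a free $\mathbb Z[G]$-module. Replacing the tail $C_2 \to C_1$ by the inclusion $B_1 \hookrightarrow C_1$ produces the length-two free resolution
$$
0 \to B_1 \hookrightarrow C_1 \xrightarrow{\partial_1} C_0 \xrightarrow{\epsilon} \mathbb Z \to 0,
$$
which witnesses $\mathrm{cd}(G)\leq 2$.

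There is no real obstacle: the argument is short once Theorem \ref{freezmodules} is available, and the only thing to check is that the $n$-relator hypothesis feeds $n$-generation into the submodule $B_1 \subset C_1$ exactly as required. The mild subtlety—ensuring the torsion-free-quotient condition—is handled by $H_1(\tilde X;\mathbb Z)=0$, which embeds $C_1/B_1$ into $C_0$.
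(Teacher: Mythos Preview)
Your proof is correct and is essentially the paper's approach: the paper invokes Theorem \ref{cdim}.2 (applied to an aspherical complex obtained from the presentation $2$-complex by attaching higher cells), and the proof of Theorem \ref{cdim} is precisely the argument you have written out---show $B_1=\partial C_2$ is free via Theorem \ref{freezmodules} and truncate the resolution. The only cosmetic difference is that you work directly with the (possibly non-aspherical) presentation $2$-complex rather than first passing to its aspherical extension.
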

\addtocounter{theorem}{-1}
}

\begin{proof}
An $n$-relator group is the fundamental group of an aspherical cell complex with $n$ $2$-cells (and cells of higher dimension). If such a group satisfies $\mathcal{H}_{n}$ then, by Theorem \ref{cdim}.2, it has cohomological dimension $\leq 2$.
\end{proof}

\begin{question}
\label{hypeg}
Does every $n$-relator group satisfying $\mathcal{H}_{n}$ have geometric dimension $\leq 2$? 
\end{question}
\begin{remark}
This question is an instance of a problem raised by Eilenberg and Ganea in \cite{eilenbergganea}, asking whether there is an example of a group whose geometric and cohomological dimensions differ: 

\begin{quote}
``{\it We do not know whether these exceptional cases are actually present. The inequality dim $\Pi<$ cat $\Pi$ (cases A and B) is equivalent with the assertion that $\Pi$ is one-dimensional but not free. The problem of the existence of such a group has equivalent formulations in terms of group extensions and also in terms of properties of the integral group ring $\Lambda=\mathbb Z[\Pi]$. Similarly, the inequality cat $\Pi<$ geom. dim $ \Pi$ (cases B and C) is related to properties of the ring $\Lambda$. For instance, if it can be shown that a direct summand of a free $\Lambda$-module is free, then the equality cat $\Pi$=geom. dim $\Pi$ follows.}'' (\cite{eilenbergganea}, p. 517-518).
\end{quote}
In \cite{eilenbergganea} ``dim'' is the cohomological dimension and there is also an intermediate dimension ``cat'' that was later shown to be equivalent to  cohomological dimension by Stallings \cite{stallings}. Cases A and B refer to the hypothetical situation ($1=\dim \Pi<$ cat $\Pi=2$) which is now known to not occur. Case C refers to the potential situation ($2=$ dim $\Pi=$ cat $\Pi<$ geom. dim $\Pi=3$). The conjecture that case C does not occur, either, is nowadays referred to as the Eilenberg-Ganea conjecture. A proof of the reduction of this conjecture to a question about group rings claimed in the last sentence of the quote may help with Question \ref{hypeg}, but we do not know how to establish such a reduction, even with the additional assumption $\mathrm{GL}_n(\Lambda)=\mathrm{GE}_n(\Lambda)$ for all $n$.
\end{remark}

Concerning the topology of presentation complexes, we get the following.

\begin{theorem}
Assume that the group $G$ satisfies $\mathcal{H}_{n}$.
\begin{enumerate}
\item[1.] 
Every presentation $2$-complex for $G$ with $n$ relations (or less) has a free $\pi_2$.
\item[2.](Theorem \ref{hypstandard} from introduction)
Assume further that the group $G$ has geometric dimension two, and let $Y$ be an aspherical $2$-complex with fundamental group $G$. Then every presentation $2$-complex with less than $n+1$ relations is standard, i.e. has the same homotopy type as $$Y \vee S^2 \vee \ldots \vee S^2.$$ 
\end{enumerate}
\end{theorem}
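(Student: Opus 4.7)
Let $X$ be a presentation 2-complex for $G$ with $r \leq n$ two-cells, and let $\tilde X$ be its universal cover, with cellular chain complex
\[
0 \to C_2 \xrightarrow{\partial_2} C_1 \xrightarrow{\partial_1} C_0 \to 0
\]
of free $\mathbb Z[G]$-modules, where $C_2 \cong \mathbb Z[G]^r$ and $C_0 \cong \mathbb Z[G]$. Set $B_1 := \mathrm{im}(\partial_2) = \ker(\partial_1)$. The plan is two applications of Theorem \ref{freezmodules}. First, $B_1$ is $r$-generated by the images of the generators of $C_2$, and $C_1/B_1 \cong \partial_1(C_1)$ embeds in $C_0$, which is a free abelian group; so by Theorem \ref{freezmodules}, $B_1$ is a free $\mathbb Z[G]$-module. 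Consequently, the sequence $0 \to \pi_2(X) \to C_2 \to B_1 \to 0$ splits and $\pi_2(X)$ is a direct summand of $C_2 \cong \mathbb Z[G]^r$, hence $r$-generated and with quotient $B_1$ (free, so torsion-free as abelian group). A second application of Theorem \ref{freezmodules} then yields that $\pi_2(X)$ is a free $\mathbb Z[G]$-module.

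\textbf{Part 2.} Now assume $G$ has geometric dimension two, fix an aspherical 2-complex $Y$ with $\pi_1(Y) = G$, and let $X$ be a presentation 2-complex for $G$ with $r \leq n$ two-cells. By Part 1, $\pi_2(X)$ is free, of some rank $k \leq r$. By Hurewicz, choose a free basis $\alpha_1, \dots, \alpha_k : S^2 \to X$ for $\pi_2(X)$. The plan is to perform handle slides on the 2-cells of $X$, transforming it into a homotopy equivalent complex in which $k$ of the 2-cells attach nullhomotopically; these $k$ cells then split off as wedged 2-spheres, yielding $X \simeq W \vee (S^2)^{\vee k}$. The chain complex of the universal cover of $W$ is
\[
0 \to B_1 \to C_1 \to C_0 \to 0,
\]
a length-two free resolution of $\mathbb Z$, so $W$ is aspherical. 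Since any two $K(G,1)$-spaces are homotopy equivalent, $W \simeq Y$, and $X \simeq Y \vee S^2 \vee \cdots \vee S^2$ as claimed.

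\textbf{Main obstacle.} The delicate step in Part 2 is realizing the algebraic change of basis of $C_2$ (from the original cell basis to one adapted to the splitting $C_2 \cong \pi_2(X) \oplus B_1$) geometrically by handle slides, which correspond to elementary row operations. This amounts to showing the relevant change-of-basis matrix in $\mathrm{GL}_r(\mathbb Z[G])$ lies in $\mathrm{GE}_r(\mathbb Z[G])$. Theorem \ref{gltheorem} of the paper only supplies this over a field. For $\mathbb Z[G]$ one can either bootstrap via the local-to-global strategy of Proposition \ref{bassprop} -- using $\mathrm{GL}_m(\mathbb F_p[G]) = \mathrm{GE}_m(\mathbb F_p[G])$ at every prime $p$ and for every $m \leq n$, together with the torsion-free quotients coming from our chain complex -- or else stabilize by wedging extra 2-spheres onto both $X$ and $Y$, carry out the slides in the stabilized setting, and then cancel the extra spheres using that $\pi_2(X)$ and $\pi_2(Y \vee (S^2)^{\vee k})$ are identified as free $\mathbb Z[G]$-modules of the same rank.
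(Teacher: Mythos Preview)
Your Part 1 is correct and matches the paper's argument essentially verbatim: two applications of Theorem \ref{freezmodules}, first to $B_1=\partial C_2$ (an $n$-generated submodule with torsion-free quotient) and then to the complementary summand $\pi_2(X)$.

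For Part 2 you take a genuinely different route from the paper, and the one you choose runs into a real obstacle that you do not resolve. You attempt to modify $X$ by handle slides until $k$ of its $2$-cells are trivially attached; as you correctly observe, this requires realizing a change-of-basis matrix in $\mathrm{GL}_r(\mathbb Z[G])$ by elementary operations, i.e.\ $\mathrm{GL}_r(\mathbb Z[G])=\mathrm{GE}_r(\mathbb Z[G])$, which the paper does \emph{not} establish. Theorem \ref{gltheorem} is only over a field, and the Remark following Proposition \ref{bassprop} explicitly notes that the local-to-global bootstrap you propose \emph{fails} for this purpose (the counterexample over $\mathbb Z[t]$ shows there is no formal reduction from the field case). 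Your stabilize-and-cancel alternative is likewise not a proof as written: cancellation of $S^2$ wedge summands from $2$-complexes is a delicate question and does not follow merely from equality of the ranks of $\pi_2$.

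The paper sidesteps all of this by building a map in the opposite direction rather than modifying $X$. Choose any map $Y\to X$ inducing the $\pi_1$-isomorphism (possible since $Y$ is $2$-dimensional), and send $m$ wedged spheres to representatives of a $\mathbb Z[G]$-basis of $\pi_2(X)$. The resulting map $f\colon Y\vee S^2\vee\cdots\vee S^2\to X$ is a $\pi_1$-isomorphism and, on universal covers, a homology isomorphism (both covers have $H_1=0$ and $H_2\cong\mathbb Z[G]^m$, with the chosen basis mapping to a basis). Hence $f$ is a homotopy equivalence by Whitehead. No $\mathrm{GE}=\mathrm{GL}$ statement over $\mathbb Z[G]$ is needed at all.
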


\begin{proof}
If $X$ is an $n$-relator presentation $2$-complex for such a group and $C_{\ast} = C_{\ast} (\tilde{X} ; \mathbb{Z})$ is the chain complex of its universal cover, then $\partial (C_2)$ is an $n$-generated submodule of $C_1$ satisfying $(\star)$. Hence it is a free module (by Theorem \ref{freezmodules}), and we have a splitting $C_2 \cong \ker \partial \oplus \partial (C_2)$. Since $X$ is a $2$-complex, the kernel is identified with $H_2(\tilde X)$ and, via the Hurewicz theorem, with $\pi_2(X)$. By projecting the generators of $C_2$ to $\pi_2 (X)$, we see that $\pi_2 (X)$ is generated by $n$ elements. Moreover, $\pi_2 (X)$ satisfies $(\star)$ (in fact, it is projective) so $\pi_2 (X)$ is free by Theorem \ref{freezmodules}, i.e. $\pi_2 (X) \cong\mathbb{Z} [G]^m$ for some $m$. This proves 1.
  
If $Y$ is an aspherical $2$-complex $Y$ with fundamental group $G$, we can construct a homotopy equivalence $f : Y \vee S_1^2 \vee \ldots \vee S^2_m \rightarrow X$ as follows. First, construct a map realizing the $\pi_1$-isomorphism $Y \rightarrow X$. This can be done since $Y$ is a $2$-complex. Second, let the $(S^2_i)_{1 \leqslant i \leqslant m}$ represent a $\mathbb{Z} [G]$-basis of $\pi_2  (X)$. Then the resulting map $f$ is a $\pi_1$-isomorphism and a homology isomorphism of universal covers. Since $f$ is a map of CW complexes, Corollary 4.33 of \cite{hatcherbook} implies that $f$ defines a homotopy equilvalence of universal covers. Therefore, $f$ induces an isomorphism on $\pi_i$ for all $i$. So, by Whitehead's theorem, $f$ is a homotopy equivalence.  This proves 2.
\end{proof}

\bibliography{trees}
\bibliographystyle{amsplain}
\end{document}